\theoremstyle{plain}
\newtheorem{theorem}{Theorem}
\numberwithin{theorem}{section}
\newtheorem{lemma}[theorem]{Lemma}
\newtheorem{corollary}[theorem]{Corollary}
\theoremstyle{definition}
\newtheorem{definition}[theorem]{Definition}
\newtheorem{example}[theorem]{Example}
\newtheorem{problem}[theorem]{Problem}
\newcommand{\C}{{\mathbb C}}
\newcommand{\R}{{\mathbb R}}
\newcommand{\Z}{{\mathbb Z}}
\newcommand{\Q}{{\mathbb Q}}
\renewcommand{\P}{{\mathbb P}}
\newcommand{\s}{{\mathbb S}}
              \newcommand{\E}{{\mathcal E}}
\begin{document}
\title{On a class of symplectic $4$-orbifolds with vanishing canonical class}
\author{Weimin Chen}
\subjclass[2010]{Primary 57R18; Secondary 57R17, 57S17}
\keywords{Symplectic $4$-orbifolds, symplectic resolution, finite group actions, symplectic Calabi-Yau, configurations of symplectic surfaces, rational $4$-manifolds, successive symplectic blowing-down, symplectic arrangements, pseudo-holomorphic curves.}
\thanks{}
\date{\today}
\maketitle

\begin{abstract}
A study of certain symplectic $4$-orbifolds with vanishing canonical class is initiated. We show that for any such symplectic $4$-orbifold $X$, there is a canonically constructed symplectic $4$-orbifold $Y$, together with a cyclic orbifold covering $Y\rightarrow X$, such that $Y$ has at most isolated Du Val singularities and a trivial orbifold canonical line bundle. The minimal resolution of $Y$, to be denoted by $\tilde{Y}$, is a symplectic Calabi-Yau $4$-manifold endowed with a natural symplectic finite cyclic action, extending the deck transformations of the orbifold covering $Y\rightarrow X$. Furthermore, we show that when $b_1(X)>0$, $\tilde{Y}$ is a $T^2$-bundle over $T^2$ with symplectic fibers,
and when $b_1(X)=0$, $\tilde{Y}$ is either an integral homology $K3$ surface or a rational homology $T^4$; in the latter case, the singular set of $X$ is completely classified. To further investigate the
topology of $X$, we introduce a general successive symplectic blowing-down procedure, which may be of independent interest. Under suitable assumptions, the procedure allows us to successively blow down a given symplectic rational $4$-manifold to $\C\P^2$, during which process we can canonically 
transform a given configuration of symplectic surfaces to a ``symplectic arrangement" of pseudoholomorphic curves in $\C\P^2$. The procedure is reversible; by a sequence of successive blowing-ups in the reversing order, one can recover the original configuration of symplectic surfaces up to a smooth isotopy. 
\end{abstract}

\section{Introduction and the main results}
In this paper, we consider a class of symplectic $4$-orbifolds which have vanishing canonical class. Our consideration has its origin in the study of symplectic Calabi-Yau $4$-manifolds endowed with certain symplectic finite group actions (cf. \cite{C}); in particular, the quotient orbifolds arising in \cite{C} belong to this class of $4$-orbifolds. (By definition, a symplectic $4$-manifold is called 
{\it Calabi-Yau} if it has trivial canonical line bundle.) We regard these symplectic $4$-orbifolds as certain intermediate objects, between the symplectic rational or ruled $4$-manifolds and the symplectic Calabi-Yau $4$-manifolds. On the one hand, we believe classifying such $4$-orbifolds is a more attainable objective, and on the other hand, we hope that these $4$-orbifolds may lead to
new progress in the topology of symplectic Calabi-Yau $4$-manifolds. Finally, these symplectic $4$-orbifolds are an interesting object to study in its own right. 

The class of symplectic $4$-orbifolds to be considered in this paper, which will be denoted by $X$ throughout, are specified by the conditions (i)-(iii) below. We denote the underlying space of 
$X$ by $|X|$. 
\begin{itemize}
\item [{(i)}] The canonical line bundle $K_X$, as an orbifold complex line bundle, has a well-defined first Chern class $c_1(K_X)\in H^2(|X|;\Q)$. We assume $c_1(K_X)=0$.
\item [{(ii)}] We assume the singular set of $X$ consists of a disjoint union of embedded surfaces 
$\{\Sigma_i\}$ and a set of isolated points $\{q_j\}$, where we denote by $m_i>1$ the order 
of isotropy along $\Sigma_i$, and by $G_j$ the isotropy group at $q_j$. Note that the symplectic $G_j$-action on the uniformizing system (i.e., orbifold chart) centered at $q_j$ naturally defines 
$G_j$ as a subgroup of $U(2)$ (i.e., $G_j\subset U(2)$). With this understood, we let $H_j$ be the normal subgroup of $G_j$ which consists of elements of determinant $1$ (i.e, $H_j=G_j\cap SU(2)$), and let $m_j$ be the order of the quotient group $G_j/H_j$, which is
easily seen cyclic. (The singular point $q_j$ is called a Du Val singularity if and only if $m_j=1$.) 
\item [{(iii)}] We set $n:=\text{lcm}\{m_i,m_j\}$ to be the least common multiple of $m_i,m_j$, and 
we assume $n>1$, which means that either there is a $2$-dimensional component $\Sigma_i$ in the singular set, or there is a singular point $q_j$ of non-Du Val type. 
\end{itemize}

With the preceding understood, we recall a construction from \cite{C1}, Theorem 1.5, that is, for any symplectic $4$-orbifold, one can canonically associate it with a symplectic $4$-manifold, called the {\it symplectic resolution}. In the present case, the symplectic resolution of $X$, to be denoted by $\tilde{X}$, is obtained as follows. First, one de-singularizes the symplectic structure on $X$ along the $2$-dimensional singular components $\{\Sigma_i\}$, which results a natural symplectic structure on
the underlying space $|X|$, making it a symplectic $4$-orbifold with only isolated singular points 
$\{q_j\}$. Each $\Sigma_i$ descends to an embedded symplectic surface in $|X|$, which will be denoted by $B_i$. With this understood, the symplectic resolution $\tilde{X}$ is simply the minimal symplectic resolution of the orbifold $|X|$. We refer the readers to \cite{C1} for more details. 
(Compare also \cite{MR}.)

For each $j$,  let $\{F_{j,k}|k\in I_j\}$ be the exceptional set in the minimal resolution of $q_j$, and 
denote by $D_j:=\cup_{k\in I_j} F_{j,k}$ be the configuration of symplectic spheres in $\tilde{X}$. 
Furthermore, we denote by $D$ the pre-image of the singular set of $X$ in $\tilde{X}$ under the resolution map $\tilde{X}\rightarrow X$. Then clearly, 
$$
D=\cup_i B_i \cup \cup_j D_j. 
$$
With this understood, we note that the assumption $c_1(K_X)=0$ implies that the canonical class
of $\tilde{X}$ is supported in $D\subset \tilde{X}$. Indeed, by Proposition 3.2 of \cite{C1},
$c_1(K_X)=0$ implies
$$
c_1(K_{\tilde{X}})=-\sum_i \frac{m_i-1}{m_i} B_i +\sum_j \sum_{k\in I_j} a_{j,k} F_{j,k},
$$
where $\{a_{j,k}\}$ is a set of rational numbers uniquely determined by the following equations:
for each $j$, we set $c_1(D_j):=\sum_{k\in I_j} a_{j,k} F_{j,k}$, then
$$
c_1(D_j)\cdot F_{j,l}+F_{j,l}^2+2=0, \;\; \forall l\in I_j. 
$$
We remark that $a_{j,k}\leq 0$, and for each $j$, $a_{j,k}=0$ for all $k\in I_j$ if and only if $m_j=1$,
i.e., $q_j$ is a Du Val singularity. 

The following fact is fundamental to the considerations in this paper.

\vspace{2mm}

{\bf Proposition 1.0.}\hspace{2mm}
{\it The resolution $\tilde{X}$ is a rational or ruled $4$-manifold.}

\vspace{2mm}

This is an immediate consequence of the assumption that $n:=\text{lcm}\{m_i,m_j\}>1$; indeed,
$n>1$ implies easily that 
$$
c_1(K_{\tilde{X}})=-\sum_i \frac{m_i-1}{m_i} B_i +\sum_j \sum_{k\in I_j} a_{j,k} F_{j,k}\neq 0,
$$
and moreover, if $\tilde{\omega}$ is the symplectic structure on $\tilde{X}$, then 
$c_1(K_{\tilde{X}})\cdot [\tilde{\omega}]<0$, which implies that $\tilde{X}$ is rational or ruled. 
(Compare also \cite{C1}, Lemma 4.1, for the case of global quotients.)

Now we state the main results of this paper. 

\begin{theorem}
There exists a symplectic $4$-orbifold $Y$ with a cyclic symplectic orbifold covering $\pi: Y\rightarrow X$ of degree $n:=\text{lcm}\{m_i,m_j\}$, which has the following properties.
\begin{itemize}
\item [{(1)}] The orbifold $Y$ has at most Du Val singularities, which are given by the set $\pi^{-1}(\{q_j|H_j\neq \{1\}\})$.
\item [{(2)}] The canonical line bundle $K_Y$ is trivial as an orbifold complex line bundle. Moreover, there exists a nowhere vanishing
section $s$ of $K_Y$ such that the induced $\Z_n$-action on $K_Y$ by the deck transformations is given by the multiplication of
$\exp(2\pi i/n)$, i.e., $s\mapsto \exp(2\pi i/n)\cdot s$, for some generator of $\Z_n$.
\item [{(3)}] The symplectic $\Z_n$-action on $Y$ by the deck transformations has the following fixed-point set structure: for each $i$,
every component in $\pi^{-1}(\Sigma_i)$ is fixed by an element of order $m_i$ in $\Z_n$, and for each $j$ with $m_j>1$, every point
in $\pi^{-1}(q_j)$ is fixed by an element of order $m_j$ in $\Z_n$. The number of components in $\pi^{-1}(\Sigma_i)$ is $n/m_i$ and
the number of points in $\pi^{-1}(q_j)$ is $n/m_j$, for each $i,j$. 
\end{itemize}
\end{theorem}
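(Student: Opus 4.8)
The plan is to realize $Y$ as the orbifold covering of $X$ associated to a homomorphism $\rho\colon \pi_1^{\mathrm{orb}}(X)\to \Z_n$ built from the canonical (equivalently, determinant) line bundle, and then to read off properties (1)--(3) from the local behavior of $\rho$ at the singular strata.

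First I would record the local structure of $K_X$. At a point of $\Sigma_i$ the isotropy group $\Z_{m_i}$ acts on the normal direction by a primitive rotation and trivially along $\Sigma_i$, so it acts on the fiber of $K_X=\det T^*X$ through a faithful inverse-determinant character of order $m_i$. At $q_j$ the group $G_j\subset U(2)$ acts on the fiber of $K_X$ through $(\det)^{-1}$, whose kernel is exactly $H_j=G_j\cap SU(2)$ and whose image is the cyclic group $G_j/H_j$ of order $m_j$. In particular all of these local characters take values in $\mu_n\cong\Z_n$, since $m_i$ and $m_j$ divide $n$.

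Next I would produce the global homomorphism $\rho$. The hypothesis $c_1(K_X)=0\in H^2(|X|;\Q)$ says that $K_X$ carries a flat orbifold connection, whose holonomy around the orbifold loop linking a singular stratum is precisely the isotropy representation computed above; this gives a character $\rho_0\colon \pi_1^{\mathrm{orb}}(X)\to U(1)$ of finite order $m_i$ or $m_j$ on each isotropy group. The \emph{main obstacle} is to arrange that the full image of $\rho$ is the finite group $\mu_n$ rather than a dense subgroup of $U(1)$. I would do this by twisting $\rho_0$ by a flat $U(1)$-character that is trivial on every isotropy group (so as not to disturb the prescribed local holonomies) and cancels the holonomy of $\rho_0$ along the free part of $H_1^{\mathrm{orb}}(X)$; such a character exists because a homomorphism trivial on all isotropy subgroups factors through $H_1(|X|;\Z)$, on whose free part one may prescribe arbitrary values. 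This is exactly where the case $b_1(X)>0$ enters and must be handled with care; when $b_1(X)=0$ the group $H_1^{\mathrm{orb}}(X;\Z)$ is already finite and no twisting is needed. Once the image is finite it contains each $\mu_{m_i}$ and $\mu_{m_j}$, hence equals $\mu_n$ by $n=\mathrm{lcm}\{m_i,m_j\}$, so $\rho\colon \pi_1^{\mathrm{orb}}(X)\to\Z_n$ is the desired surjection and $\pi\colon Y\to X$, the covering associated to $\ker\rho$, has degree $n$. Throughout, the symplectic form and a compatible almost complex structure can be chosen $\Z_n$-invariant, so that $\pi^*\omega$ makes $Y$ a symplectic orbifold on which the deck group acts symplectically.

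Finally I would verify (1)--(3) by the covering-space bookkeeping dictated by $\rho$. Over $\Sigma_i$, since $\rho$ is injective on $\Z_{m_i}$, the isotropy is fully unwound: $\pi^{-1}(\Sigma_i)$ consists of $n/m_i$ smooth surfaces, each fixed by the order-$m_i$ element generating $\rho(\Z_{m_i})$. Over $q_j$ the isotropy of $Y$ is $\ker(\rho|_{G_j})=H_j\subset SU(2)$, so $Y$ has at worst a Du Val singularity there, occurring exactly when $H_j\neq\{1\}$, with $n/m_j$ such points each fixed by an order-$m_j$ element; this establishes (1) and (3). For (2), since $\pi$ is an orbifold covering one has $K_Y=\pi^*K_X$, and by construction $\rho$ is the holonomy of $K_X$, so $\pi^*K_X$ is flat with trivial holonomy and hence admits a nowhere-vanishing section $s$; for the generator of $\Z_n$ corresponding to $1\in\Z_n$ under $\rho$, the deck action multiplies $s$ by $\rho(\text{generator})=\exp(2\pi i/n)$, which is exactly the asserted equivariance.
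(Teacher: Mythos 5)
Your strategy---extract a character $\rho\colon \pi_1^{\mathrm{orb}}(X)\to U(1)$ from a flat structure on $K_X$, arrange its image to be $\mu_n\cong\Z_n$, and take the associated covering---is a legitimate reformulation of the paper's construction (the paper instead takes the $n$-th root of a trivializing section of $K_X^n$ inside the total space of $K_X$; that root construction is exactly the covering associated to such a character). But your argument has a genuine gap at the decisive step, namely the sentence ``once the image is finite it contains each $\mu_{m_i}$ and $\mu_{m_j}$, hence equals $\mu_n$.'' A finite cyclic subgroup of $U(1)$ containing all the $\mu_{m_i},\mu_{m_j}$ contains $\mu_n$ but need not equal it. Concretely, write $H_1^{\mathrm{orb}}(X;\Z)\cong\Z^r\oplus T$ with $T$ the torsion subgroup; the images of the isotropy groups lie in $T$, but $T$ may a priori contain torsion not generated by isotropy. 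On $T$ the holonomy of a flat structure on $K_X$ is forced: the twists available to you are characters whose associated flat bundle is topologically trivial---these are precisely the characters vanishing on $T$---so no twist can change $\rho|_T$. (Note you do need the flat bundle to remain $K_X$ itself, not $K_X$ tensored with a torsion bundle, because your proof of (2) uses that $K_Y=\pi^*K_X$ is the trivial flat bundle; for the same reason your stated twisting condition, triviality merely on the isotropy subgroups, is too weak.) If $\rho|_T$ had image $\mu_N$ with $N$ a proper multiple of $n$, then every admissible $\rho$ would have image $\mu_N$, the resulting covering would have degree $N\neq n$, and composing with $\mu_N\to\mu_n$ would destroy injectivity on the isotropy groups. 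So as written the theorem does not follow.

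What rules this scenario out is precisely the paper's Lemma 2.1: $K_X^n$ is trivial as an orbifold complex line bundle (equivalently, in your language, $\rho|_T$ takes values in $\mu_n$), with $n$ minimal. This is not formal: besides the local equivariant trivializations (which you also identify correctly), its proof needs the global fact that $H^2(|X|;\Z)$ is torsion-free, which the paper deduces from Proposition 1.0---the resolution $\tilde X$ is rational or ruled, hence $\pi_1(|X|)$, and so $H_1(|X|)$, is torsion-free. Your proposal contains no substitute for this geometric input, and it is exactly the ingredient converting ``image contains $\mu_n$'' into ``image equals $\mu_n$.'' A second, related point of bookkeeping: the count of $n/m_i$ components of $\pi^{-1}(\Sigma_i)$ in (3) is governed by $\rho$ on $\pi_1^{\mathrm{orb}}$ of a tubular neighborhood of $\Sigma_i$, not just on the isotropy subgroup $\Z_{m_i}$; you must also arrange the holonomy along loops in $\Sigma_i$ itself to lie in $\mu_{m_i}$, which is an additional condition on the choice of flat structure that your write-up does not address (in the paper this is built into the local model $z^n=1$ coming from the equivariant trivializations used in Lemma 2.1). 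With Lemma 2.1 in hand, both your main step and this refinement can be repaired, so the missing lemma is the essential content separating your sketch from a proof.
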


The construction of $\pi:Y\rightarrow X$ is a standard affair in the algebraic geometry setting (see e.g. \cite{BP}). Our construction may be regarded as a topological version of it. Note that even if $X$ arises as a global quotient $M/G$ where $M$ is a symplectic Calabi-Yau $4$-manifold, $Y$ is not necessarily the same as $M$; in fact, $Y\neq M$ as long as $X=M/G$ has an isolated singular point $q_j$ with $H_j$ nontrivial, e.g., a Du Val singularity, as in this case $Y$ is singular. Finally, the quotient $Y/\Z_n$ is naturally a smooth $4$-orbifold (cf. \cite{C1}, Lemma 2.1), and $Y/\Z_n=X$ as
orbifolds. 

Let $\tilde{Y}$ be the (minimal) symplectic resolution of $Y$. Then $\tilde{Y}$ is a symplectic Calabi-Yau $4$-manifold, and furthermore, the symplectic $\Z_n$-action on $Y$ naturally extends to a symplectic $\Z_n$-action on $\tilde{Y}$ (cf. \cite{C1}, Theorem 1.5(3)). We note that $\tilde{Y}$ only depends on the partial resolution $\tilde{X}^0$ of $X$, i.e., the symplectic $4$-orbifold obtained by only resolving the Du Val singularities of $X$. It is easy to see that the quotient orbifold 
$\tilde{Y}/\Z_n$ equals $\tilde{X}^0$ if and only if for each $j$ with $m_j>1$, the subgroup $H_j$ is trivial. Finally, note that $\tilde{Y}=Y$ (so that $\tilde{Y}/\Z_n=X$) if and only if $H_j$ is trivial for each $j$. We shall call $Y$ the {\it Calabi-Yau cover} of $X$. 

The implication of Theorem 1.1 is two-fold. On the one hand, it gives us a way to construct symplectic Calabi-Yau $4$-manifolds; we shall explore this in a future occasion. On the other 
hand, by exploiting the $\Z_n$-action on $\tilde{Y}$, we may obtain information on the singular set of $X$. Building on earlier work \cite{C}, we are led to

\begin{theorem}
The Calabi-Yau cover $Y$ and its symplectic resolution $\tilde{Y}$ are classified according to the topology of $X$ as follows:
\begin{itemize}
\item [{(1)}] Suppose $b_1(X)>0$. Then the singular set of $X$ consists of only tori with self-intersection zero. In this case, $Y=\tilde{Y}$, which is a $T^2$-bundle over $T^2$ with symplectic fibers. 
\item [{(2)}] Suppose $b_1(X)=0$. Then $\tilde{Y}$ is an integral homology 
$K3$ surface, unless $X$ falls into one of the following
two cases: $\left(i\right)$ the singular set of $X$ consists of $9$ non-Du Val isolated points 
of isotropy of order $3$, or $\left(ii\right)$ the singular set of $X$ consists of $5$ isolated points of isotropy of order $5$ which are all of type $(1,2)$. In both cases $\left(i\right)$ and $\left(ii\right)$, $Y=\tilde{Y}$, which is a rational homology $T^4$. 
\end{itemize}
\end{theorem}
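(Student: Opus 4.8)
The plan is to study the symplectic Calabi-Yau $4$-manifold $\tilde{Y}$ through its induced symplectic $\Z_n$-action, using the classification of symplectic Calabi-Yau $4$-manifolds together with the fixed-point data of Theorem 1.1(3). Since $X = \tilde{Y}/\Z_n$ as orbifolds once the Du Val points are resolved, and since resolving Du Val (hence rational) singularities does not change $H^1$ while a finite quotient computes rational cohomology by invariants, one has the rational identification $b_1(X) = \dim_{\Q} H^1(\tilde{Y};\Q)^{\Z_n}$; in particular $b_1(X) \le b_1(\tilde{Y})$. The whole argument is then governed by the dichotomy for symplectic Calabi-Yau $4$-manifolds (cf. \cite{C}): because $K_Y$ is trivial and the minimal resolution of Du Val singularities preserves triviality of the canonical bundle, $\tilde{Y}$ has genuinely trivial (not merely torsion) canonical class, which rules out the Enriques case; hence if $b_1(\tilde{Y}) = 0$ then $b_2^+(\tilde{Y}) = 3$ and $\tilde{Y}$ is a rational homology $K3$ surface, while if $b_1(\tilde{Y}) > 0$ then $\tilde{Y}$ is a $T^2$-bundle over $T^2$ with symplectic fibers.

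For Part (1), assume $b_1(X) > 0$, so $b_1(\tilde{Y}) \ge b_1(X) > 0$ and $\tilde{Y}$ is a $T^2$-bundle over $T^2$; in particular $\tilde{Y}$ is aspherical. First I would observe that an aspherical $4$-manifold contains no essential, hence no symplectic, $2$-sphere; since the exceptional set of $\tilde{Y} \to Y$ is a union of symplectic spheres, it must be empty, so $Y = \tilde{Y}$ is smooth and all $H_j = \{1\}$. Next I would analyze the fixed-point set of the symplectic $\Z_n$-action on the aspherical $\tilde{Y}$: by the results of \cite{C} on finite symplectic actions on such manifolds, every fixed component is $2$-dimensional and there are no isolated fixed points. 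By the adjunction formula together with $c_1(K_{\tilde{Y}}) = 0$, each such component is a symplectic torus of self-intersection zero. Pushing these down through $\pi$ and applying Theorem 1.1(3) shows the singular set of $X$ consists precisely of tori of square zero, completing Part (1).

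For Part (2), assume $b_1(X) = 0$, so $H^1(\tilde{Y};\Q)^{\Z_n} = 0$. If $b_1(\tilde{Y}) = 0$, then $\tilde{Y}$ is a rational homology $K3$ surface, and I would upgrade this to an integral homology $K3$ surface by combining the absence of torsion forced by the minimal-resolution structure with the simple-connectivity consequences of the trivial canonical class, as developed in \cite{C}. If instead $b_1(\tilde{Y}) > 0$, then $\tilde{Y}$ is a $T^2$-bundle over $T^2$; by the asphericity argument of Part (1) we again get $Y = \tilde{Y}$ smooth, all $H_j$ trivial, and the points of $\pi^{-1}(\{q_j\})$ are genuine isolated fixed points. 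Since the $\Z_n$-representation on $H^1(\tilde{Y};\Q)$ has no nonzero invariants, it splits into nontrivial irreducible $\Q[\Z_n]$-summands whose dimensions $\varphi(d)$ (over divisors $d \mid n$ with $d > 1$) sum to $b_1(\tilde{Y})$. The key computation combines (a) the topological Lefschetz formula for a generator $g$, whose isolated fixed points each have local index $+1$ (as $g$ is an orientation-preserving symplectic rotation with no eigenvalue $1$), so $L(g)$ equals the fixed-point count dictated by Theorem 1.1(3); with (b) the holomorphic (Atiyah--Bott) Lefschetz formula, where $g$ acts on $H^0(K_{\tilde{Y}})$ by $\exp(2\pi i/n)$ by Theorem 1.1(2), so that the local contributions $1/\!\left((1-\zeta^{a_p})(1-\zeta^{b_p})\right)$, with $\zeta = \exp(2\pi i/n)$ and $(a_p,b_p)$ the rotation weights, are tightly constrained. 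Solving these constraints will force $b_1(\tilde{Y}) = 4$, so that $\tilde{Y}$ is a rational homology $T^4$, and will single out exactly case (i), with $n = 3$ and $9$ non-Du Val points of order $3$, and case (ii), with $n = 5$ and $5$ points of type $(1,2)$.

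The main obstacle is the enumeration in the $b_1(\tilde{Y}) > 0$ branch of Part (2): one must show that no configuration other than (i) and (ii) satisfies simultaneously the topological and holomorphic Lefschetz identities, the representation-theoretic bound $\sum \varphi(d) = b_1(\tilde{Y})$, and the requirement that each fixed point be non-Du Val with trivial $H_j$; this in particular must exclude the intermediate values $b_1(\tilde{Y}) = 2, 3$. A secondary, more delicate point is the passage from rational to integral homology $K3$ in the $b_1(\tilde{Y}) = 0$ case, which requires ruling out residual torsion; I would handle this using the precise structure of the symplectic Calabi-Yau surface and its $\Z_n$-action established in \cite{C}.
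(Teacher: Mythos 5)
Your proposal rests on a version of the Bauer--Li classification that is strictly stronger than what is actually known, and this creates gaps at the two places where your argument does real work. The known dichotomy says only that a symplectic Calabi--Yau $4$-manifold with $b_1>0$ is a \emph{rational homology} $T^2$-bundle over $T^2$; it does not give an actual bundle structure, and in particular it does not give asphericity. (Whether every such manifold is a genuine $T^2$-bundle over $T^2$ is precisely the open problem in this area.) In Part (1), the paper obtains the honest $T^2$-bundle structure not from the classification but from Theorem 1.1 of \cite{C}, whose hypothesis is that the resolution of the quotient $\tilde{Y}/\Z_n$ is \emph{irrational ruled} --- i.e., it uses $b_1(X)>0$, not merely $b_1(\tilde{Y})>0$. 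Worse, in Part (2) this tool is unavailable altogether (there the quotient's resolution is rational, and the final answer is only a rational homology $T^4$), so your phrase ``by the asphericity argument of Part (1) we again get $Y=\tilde{Y}$'' cannot be salvaged: the paper instead proves $Y=\tilde{Y}$ case by case, using the cohomology-ring theorem of \cite{Sr} (which kills the Hurewicz image of $\pi_2$ when $b_1(\tilde{Y})=4$), a separate local-weights argument when $b_1(\tilde{Y})=2$, and an exclusion of $b_1(\tilde{Y})=3$ via Lemma 2.6 of \cite{C}. Even within Part (1), asphericity would not suffice for your fixed-point claim: $T^4$ is aspherical yet admits symplectic $\Z_3$-actions with $9$ isolated fixed points (this is exactly case (i) of the theorem!), so ``no isolated fixed points'' cannot follow from the topology of $\tilde{Y}$ alone. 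The paper's Lemma 2.2 derives it from $b_1(X)>0$: for any prime-order subgroup $H$ one has $b_1(Y/H)\geq b_1(Y/\Z_n)=b_1(X)>0$, forcing the resolution of $Y/H$ to be irrational ruled, whereupon Theorem 1.2(2) of \cite{C} says all fixed components of $H$ are tori of square zero.

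The second, independent problem is that the heart of Part (2) --- showing that the $b_1(\tilde{Y})>0$ branch produces exactly cases (i) and (ii) with $n=3$ or $5$ --- is exactly what you defer to ``solving these constraints,'' and the tool you propose for it is not available: the Atiyah--Bott holomorphic Lefschetz formula requires an integrable complex structure, and $\tilde{Y}$ is only symplectic; there is no cohomology group $H^0(K_{\tilde{Y}})$ on which Theorem 1.1(2) would let a generator act by $\exp(2\pi i/n)$. The paper's actual route is different and is where the real content lies: Lemma 2.4 shows \emph{some} prime-order subgroup must have rational quotient resolution (by an argument on the $\s^2$-bundle structure of $|Y/\Gamma|$, using $b_1(X)=0$ to force a homologically nontrivial action on the base $T^2$); Lemma 2.5 shows this subgroup is unique with order $3$ or $5$ (by intersecting the fixed-point sets $Y^{H_1}$, $Y^{H_2}$ and counting); the values $n=9,25$ are then excluded by the integrality/angle constraints of Lemma 2.7 of \cite{C}; and finally the fixed-point classification of Theorem 1.2(3) of \cite{C} pins down the singular set, with the converse direction ($Y=\tilde{Y}$ a rational homology $T^4$ in cases (i), (ii)) established by a Lefschetz/Euler characteristic computation giving $\chi(Y)=0$. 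None of this is replaced by your outline, so the proposal as it stands does not prove the theorem.
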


We remark that, as a consequence of Theorem 1.2, it remains to further investigate the topology of $X$ for the case where $b_1(X)=0$, as far as the topology of $\tilde{Y}$ is concerned (if $X=M/G$ is
a global quotient and $b_1(X)>0$, then $M$ must be a $T^2$-bundle over $T^2$, cf. \cite{C}, Theorem 1.1). The following problems are fundamental.

\begin{problem}
Suppose $b_1(X)=0$.
\begin{itemize}
\item[{(1)}] Determine the singular set, i.e., the possible topological type, including the orders of
isotropy of the $2$-dimensional singular components, of $X$.
\item[{(2)}] Determine whether $X$ admits a complex structure.
\item[{(3)}] Determine whether a possible topological type of the singular set of $X$ can be realized.
\item[{(4)}] Determine the orbifold fundamental group of $X$ (as well as that of the partial resolution 
$\tilde{X}^0$, i.e., the symplectic $4$-orbifold obtained by only resolving the Du Val singularities of $X$).
\end{itemize}
\end{problem}

Concerning Problem 1.3(1), it remains to consider the case where $\tilde{Y}$ is an integral homology $K3$ surface. It is conceivable that there are only finitely many possible topological types for the singular set of $X$. 

Concerning Problem 1.3(2)-(4), our strategy is to consider the resolution $\tilde{X}$ of $X$,
which is a symplectic rational $4$-manifold, and to consider the embedding $D\subset \tilde{X}$, 
which is a disjoint union of configurations of symplectic surfaces in $\tilde{X}$. The relevant questions concerning $X$ are then reduced to corresponding questions concerning the embedding of $D\subset \tilde{X}$. For instance, Problem 1.3(2) is equivalent to the question as whether the embedding of $D$ in $\tilde{X}$ can be made holomorphic. 

In order to study the embedding $D\subset \tilde{X}$, our strategy is to first determine the homology classes of the components of $D$, i.e., the symplectic surfaces $B_i$, $F_{j,k}$, with respect to
a certain standard basis of $H^2(\tilde{X})$ (called a reduced basis, which depends on the choice of symplectic structure on $\tilde{X}$). The necessary technical machinery was developed in \cite{C}; in particular, it is shown that there are essentially only finitely many possible homological expressions of $D$ in $\tilde{X}$. To go beyond the homological classification of $D$, we introduce in this paper a general successive blowing down procedure, to be applied to a symplectic rational $4$-manifold containing a configuration of symplectic surfaces (we remark that a simple version of this procedure was already employed in \cite{C} in the proof of its main result). In particular, this successive blowing down procedure allows us to reduce $\tilde{X}$ to either $\C\P^2$ or $\C\P^2\#\overline{\C\P^2}$, and to construct a canonical descendant of $D$ in $\C\P^2$ or $\C\P^2\#\overline{\C\P^2}$. This descendant of $D$, to be denoted by $\hat{D}$, is a union of pseudoholomorphic curves with controlled singularities and intersection properties, which depend only on the homological expression of $D$. The procedure is reversible: by successively blowing up $\hat{D}$, we can recover $D\subset \tilde{X}$ up to a smooth isotopy. In this way, we reduce the questions concerning the embedding $D\subset \tilde{X}$ to relevant questions concerning the embedding of $\hat{D}$ in $\C\P^2$ or $\C\P^2\#\overline{\C\P^2}$, which we believe are more amenable to the current existing techniques in symplectic topology. 

\vspace{2mm}

The organization of this paper is as follows. In Section 2, we prove Theorems 1.1 and 1.2.
In addition, for the purpose of illustration we also include at the end of the section a few examples of the orbifold $X$. These examples arise as the quotient orbifold of a holomorphic action on a hyperelliptic surface or a complex torus, and their singular sets do not belong to the case (i) or (ii) in Theorem 1.2. The corresponding symplectic Calabi-Yau $4$-manifold $\tilde{Y}$ is a $K3$ surface,
equipped with a non-symplectic automorphism of finite order. A common feature of these examples is that the $K3$ surface contains a large number of $(-2)$-curves. Section 3 contains some general constraints on the singular set of $X$. There are two, seemingly independent, sources for the constraints. One type of the constraints is obtained by analyzing the symplectic $\Z_n$-action on
the Calabi-Yau homology $K3$ surface $\tilde{Y}$, while the other type is derived from the Seiberg-Witten-Taubes theory. In Section 4, we give a detailed account of the successive blowing down procedure. Furthermore, at the end of the section we apply the procedure to a few concrete examples for the purpose of illustration. 

\vspace{3mm}

{\bf Acknowledgement:} We thank Alan L. Edmonds, Paul Hacking, and Weiwei Wu for useful communications. We are also grateful to an anonymous referee for a critical reading of the paper.

\section{Proof of the main theorems}

Recall that an orbifold complex line bundle $p: L\rightarrow X$ is said to be trivial if there is a collection of local trivializations of $L$ such that the associated transition functions are given by identity maps,
and moreover, for any local trivialization of $L$ over an uniformizing system $(U,G)$, the $G$-action on $p^{-1}(L|_U)\cong U\times \C$ is trivial on the $\C$-factor. Note that this latter condition is equivalent to the statement that $L$ descends to an ordinary complex line bundle over the underlying topological space $|X|$. It follows easily that if $L$ is a trivial orbifold complex line bundle, then the underlying total space of $L$, denoted by $|L|$, is given by the product $|X|\times \C$. 

\begin{lemma}
Set $L:=K_X$. Then the $n$-th tensor power $L^n$ is a trivial orbifold complex line bundle over $X$. Moreover, $n$ is the minimal positive integer having this property, i.e., if $L^m$ is a trivial orbifold complex line bundle for some $m>0$, then $m$ must be divisible by $n$. 
\end{lemma}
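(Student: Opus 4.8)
The plan is to separate the two requirements in the definition of a trivial orbifold line bundle: the \emph{local} (orbifold) condition that the isotropy acts trivially on each fiber, which governs the descent of $L^m$ to $|X|$ and forces the minimality statement, and the \emph{global} (topological) condition that the resulting ordinary line bundle over $|X|$ is genuinely trivial. Set $L=K_X=\Lambda^{2,0}T^*X$, so that the isotropy action on a fiber of $L$ is by the inverse determinant of the linearized action on the tangent space in a uniformizing system.

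First I would compute the isotropy representation of $L$ along each singular stratum. Along a $2$-dimensional component $\Sigma_i$, the isotropy $\Z_{m_i}$ fixes $T\Sigma_i$ and rotates the normal $\C$ by a primitive $m_i$-th root of unity; hence it acts on the fiber of $L$ through $\det^{-1}$ by a primitive $m_i$-th root of unity, a representation of order exactly $m_i$. At an isolated point $q_j$, the isotropy $G_j\subset U(2)$ acts on the fiber of $L$ by the character $\det^{-1}\colon G_j\to U(1)$; since $\ker(\det|_{G_j})=G_j\cap SU(2)=H_j$ and $G_j/H_j\cong\Z_{m_j}$, the image of this character is cyclic of order exactly $m_j$. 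Consequently the isotropy acts trivially on the fibers of $L^m$ along every stratum if and only if $m_i\mid m$ and $m_j\mid m$ for all $i,j$, i.e.\ if and only if $n=\mathrm{lcm}\{m_i,m_j\}$ divides $m$. In particular $L^n$ descends to an ordinary complex line bundle $\overline{L^n}$ over $|X|$, and any trivial $L^m$ is a fortiori orbifold-trivial, forcing $n\mid m$; this already settles the minimality assertion.

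It then remains to show that the descended bundle $\overline{L^n}$ over $|X|$ is topologically trivial, i.e.\ admits identity transition functions. By construction $c_1(\overline{L^n})\in H^2(|X|;\Z)$ maps to $n\,c_1(K_X)=0$ in $H^2(|X|;\Q)$, so $c_1(\overline{L^n})$ is a torsion class. I expect this to be the main obstacle: promoting the rational vanishing supplied by assumption (i) to integral vanishing, since collapsing the exceptional configurations $D_j$ to the points $q_j$ can a priori introduce torsion into $H^2(|X|;\Z)$. The plan is to produce an explicit nowhere-vanishing section of $\overline{L^n}$: using the symplectic (almost complex) structure one trivializes $L^n$ locally by $n$-th powers of the canonical generator, and then checks that these local sections match across the (codimension $\geq 2$) singular strata precisely because the isotropy characters computed above have order dividing $n$; such a global section is equivalent to identity transition functions, hence to triviality of $\overline{L^n}$, and therefore of $L^n$, in the required sense. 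Alternatively one may pull back to the resolution $\tilde{X}$, where $H^2$ is torsion-free and $c_1(\overline{L^n})$ pulls back to $0$, and argue that no torsion survives the descent $\tilde{X}\to|X|$; controlling this descent is the delicate point I would need to treat with care.
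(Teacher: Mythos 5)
Your local isotropy analysis is correct and matches the paper's: along $\Sigma_i$ the isotropy character on the fibers of $K_X$ has order exactly $m_i$, at $q_j$ it factors through $G_j/H_j$ and has order exactly $m_j$, so $L^m$ descends to $|X|$ if and only if $n\mid m$; this gives both the descent of $L^n$ and the minimality statement, exactly as in the paper (which phrases the same computation via equivariant trivializations of $K_{U_i}$ and $K_{U_j}$ over uniformizing neighborhoods).

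However, the global step --- showing the descended bundle $\overline{L^n}$ over $|X|$ is actually trivial --- has a genuine gap, and you have correctly located it without closing it. Your first route (matching local nowhere-vanishing sections across the singular strata) is circular: local trivializing sections of an ordinary line bundle never automatically agree on overlaps; they differ by $\C^\ast$-valued functions, and the obstruction to choosing them consistently is precisely the integral Chern class, including its torsion part, which is what you are trying to prove vanishes. (There is also no ``canonical generator'' of $K_X$ in the symplectic category to take $n$-th powers of.) Your second route is declared ``the delicate point I would need to treat with care,'' i.e.\ it is left open. The paper's resolution is short and uses an ingredient your proposal never touches: assumption (iii), $n>1$, via Proposition 1.0. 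Because $n>1$ forces $c_1(K_{\tilde{X}})\cdot[\tilde\omega]<0$, the resolution $\tilde{X}$ is rational or ruled, so $\pi_1(|X|)=\pi_1(\tilde{X})$ is either trivial or a surface group; hence $H_1(|X|)$ is torsion-free, and the universal coefficient theorem gives $H^2(|X|;\Z)\cong \operatorname{Hom}(H_2(|X|),\Z)$, which is torsion-free. Therefore the torsion class $c_1(\overline{L^n})\in H^2(|X|;\Z)$ must vanish outright, and $\overline{L^n}$ is trivial. Without an argument of this kind (or some other mechanism killing torsion in $H^2(|X|;\Z)$), your proof of the triviality of $L^n$ is incomplete.
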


\begin{proof}
For each $i$, let $\nu_i:=TX/T\Sigma_i$ be the normal bundle of $\Sigma_i$ in $X$, and let $U_i$
be the associated disc bundle of $\nu_i$. Then there is a natural smooth $\Z_{m_i}$-action on
$U_i$, fixing the zero section and free in its complement, such that $U_i/\Z_{m_i}$ is identified with a regular neighborhood of $\Sigma_i$ in $|X|$. (We may regard $(U_i,\Z_{m_i})$ as an uniformizing 
system of $X$ along $\Sigma_i$.)
Note that $H^2(U_i)$ is torsion-free, so that $c_1(K_X)=0$ in $H^2(|X|;\Q)$ implies that $K_{U_i}$ is trivial. With this understood, it is easy to see that there is a trivialization $K_{U_i}\cong U_i\times \C$ such that the induced $\Z_{m_i}$-action on the trivialization 
is given by the multiplication of $\exp(2\pi i/m_i)$ on the $\C$-factor for some generator of $\Z_{m_i}$. On the other hand, for each $j$, if we let $(U_j,G_j)$ be an uniformizing system centered at $q_j$, where $U_j$ is a $4$-ball, then $K_{U_j}\cong U_j\times \C$, and the induced $G_j$-action is given by the multiplication of $\exp(2\pi i/m_j)$ for a generator of $G_j/H_j$. With this understood, we see immediately that $L^n$ descends to an ordinary complex line bundle over $|X|$. Furthermore, it also follows easily that if $L^m$ is a trivial orbifold complex line bundle for some $m>0$, then $m$ must be divisible by $n=\text{lcm}\{m_i,m_j\}$.

To show that $L^n$ is the trivial orbifold complex line bundle, it remains to prove that $L^n$ descends to a trivial ordinary complex line bundle over $|X|$. With this understood, we note that $c_1(L^n)=nc_1(L)=0$ in $H^2(|X|;\Q)$, and with $L^n$ as  an ordinary complex line bundle over 
$|X|$, $c_1(L^n)$ admits a lift in $H^2(|X|)$ (still denoted by $c_1(L^n)$ for simplicity), which is torsion. The assertion that $L^n$ descends to a trivial ordinary complex line bundle over $|X|$ follows readily from the claim that $c_1(L^n)=0$ in $H^2(|X|)$. 

We shall prove that $H^2(|X|)$ is torsion-free, so that $c_1(L^n)=0$ in $H^2(|X|)$ as claimed. To see this, we note that the symplectic resolution $\tilde{X}$ of $X$ is either rational or ruled. Moreover, note that $\pi_1(|X|)=\pi_1(\tilde{X})$, where $\pi_1(\tilde{X})=0$ when $\tilde{X}$ is rational, and 
$\pi_1(\tilde{X})=\pi_1(\Sigma)$ when $\tilde{X}$ is a ruled surface over a Riemann surface $\Sigma$. In any event, $H_1(|X|)$ is torsion-free, so that $H^2(|X|)=Hom(H_2(|X|),\Z)$, which implies that $H^2(|X|)$ is torsion-free as well. This finishes the proof of the lemma.

\end{proof}

\noindent{\bf Proof of Theorem 1.1:}

\vspace{2mm}

Let $t$ denote the tautological section of the pull-back bundle of $p: L\rightarrow X$ over the total space $L$, i.e., for each $x\in L$, $t(x)=x\in (p^\ast L)_x=L_{p(x)}$. Then consider $\xi:=t^n$, the 
$n$-th tensor power of $t$, which is a section of the pull-back bundle of $L^n$ over $X$ to the total space $L$. Since $L^n$ is trivial (as orbifold complex line bundle), we can fix a trivialization 
$|L|\times \C$ of the pull-back bundle $p^\ast L^n\rightarrow L$, and denote by $1$ the constant section $|L|\times \{1\}$. With this understood, we set $Y:=\xi^{-1}(1)$, as a subset of the total space $L$. The map $\pi: Y\rightarrow X$ is simply given by the restriction of $p: L\rightarrow X$ to $Y$. Let 
$\lambda$ be the generator of $\Z_n$ which acts on $L$ by fiber-wise complex multiplication by 
$\exp(2\pi i/n)$. Then it is clear that the tautological section $t$ is equivariant under the $\Z_n$-action, i.e., $t(\lambda\cdot x)=\lambda\cdot t(x)$. With this understood, note that $\xi(\lambda\cdot x)=\xi(x)$, which implies that the set $Y$ is invariant under the action of $\lambda$. Furthermore, note that
the quotient space of $Y$ under the $\Z_n$-action is identified with $X$ under $\pi: Y\rightarrow X$. 

With the preceding understood, we shall first show that $Y$ is a smooth orbifold and $\pi: Y\rightarrow X$ is a smooth orbifold covering. Equipping $Y$ with the pull-back symplectic structure, 
$\pi: Y\rightarrow X$ becomes a symplectic orbifold covering. 

To see that $Y$ is a smooth orbifold, we note that the tautological section $t$ is given by an equivariant section for any local trivialization of the pull-back of $L$ over an uniformizing system, and the argument we give below is obviously equivariant. With this understood, let $v$ be any given direction along the fiber of $L^n$. Suppose $x\in \xi^{-1}(1)=Y$ be any point. 
We choose a direction $u$ along the fiber of $L$ such that $ux^{n-1}=\frac{1}{n}v$ holds as 
tensor product (this is possible because $x\neq 0$ in $L$). Then it is easy to check that 
$$
\frac{d}{ds}(t^n(x+su))|_{s=0}=nux^{n-1}=v,
$$
which implies that the section $\xi$ intersects the constant section $1$ transversely. It follows that $Y$ is a smooth orbifold, which is easily seen of dimension $4$. 

Next we show that $\pi: Y\rightarrow X$ is a smooth orbifold covering. We shall only be inspecting the situation near the singular set of $X$, as the matter is trivial over the smooth locus. To this end, we first consider the singular components $\Sigma_i$. 
We continue to use the notation from Lemma 2.1, where
$U_i$ denotes the disc bundle of the normal bundle of $\Sigma_i$ in $X$, with a natural $\Z_{m_i}$-action on $U_i$ such that $U_i/\Z_{m_i}$ gives a regular neighborhood of $\Sigma_i$ in $|X|$. 
As we have seen before, $K_{U_i}$ is trivial, so we can fix a trivialization $K_{U_i}\cong U_i\times \C$.
Let $\delta_i\in \Z_{m_i}$ be the generator such that the action of $\delta_i$ on $U_i\times \C$ is given by the multiplication of $\exp(2\pi i/m_i)$ for the $\C$-factor. With this understood, note that 
$(U_i\times \C,\Z_{m_i})$ is an uniformizing system for the orbifold $L$, over which the pull-back bundle $p^\ast L\rightarrow L$ admits a natural trivialization $(U_i\times \C)\times \C\rightarrow
U_i\times \C$, where $\delta_i$ also acts as multiplication by $\exp(2\pi i/m_i)$ on the last $\C$-factor. With this understood, we note that $Y=\xi^{-1}(1)$ is given, in the uniformizing system 
$(U_i\times \C,\Z_{m_i})$, by the subset 
$$
V_i:=\{(y, z)\in U_i\times \C| y\in U_i, z^n=1\}.
$$
Furthermore, the action of $\lambda\in\Z_n$ is given by $(y,z)\mapsto (y,\exp(2\pi i/n)z)$, and the action of $\delta_i$ is given by $(y,z)\mapsto (\delta_i\cdot y,\exp(2\pi i/m_i)z)$. It follows easily that the quotient space of $V_i=\{(y, z)\in U_i\times \C| y\in U_i, z^n=1\}$ under the $\Z_{m_i}$-action can be identified with 
$$
\{(y, z)\in U_i\times \C| y\in U_i, z^{n/m_i}=1\},
$$
which is a disjoint union of $n/m_i$
many copies of $U_i$. This shows easily that over $\pi^{-1}(U_i/\Z_{m_i})$, $Y$ is smooth, 
$\pi: Y\rightarrow X$ is given by the quotient map of the action of $\delta_i^{-1} \lambda^{n/m_i}$,
and the number of components in $\pi^{-1}(\Sigma_i)$ is $n/m_i$. In particular, $\pi: Y\rightarrow X$ is a smooth orbifold covering near each $\Sigma_i$. 

The situation near each $q_j$ is similar. If we let $(U_j,G_j)$ be the uniformizing system near $q_j$, then in $(U_j\times \C,G_j)$, $Y$ is given by the subset 
$$
V_j:=\{(y, z)\in U_j\times \C| y\in U_j, z^n=1\}.
$$ 
Moreover, the quotient space by the $G_j$-action can be identified with 
$$
\{([y], z)\in (U_j/H_j)\times \C| [y]\in U_j/H_j, z^{n/m_j}=1\},
$$
which is a disjoint union of $n/m_j$ many copies of $U_j/H_j$. It follows easily that 
over each component of $\pi^{-1}(U_j/G_j)$, $Y$ is a smooth orbifold, uniformized by $(U_j,H_j)$. Furthermore, 
$\pi: Y\rightarrow X$ is a smooth orbifold covering near each $q_j$, and the number of points in
$\pi^{-1}(q_j)$ is $n/m_j$. Note that in particular, the argument above proved part (1) and part (3) of Theorem 1.1.

Since we endow $Y$ with the pull-back symplectic structure via $\pi: Y\rightarrow X$, one has $K_Y=\pi^\ast K_X=\pi^\ast L=(p^\ast L)|_Y$, as $\pi=p|_Y$. With this understood, the restriction of the tautological section $s:=t|_Y$ is a nowhere vanishing section of $K_Y$. Moreover, the action of 
$\lambda\in \Z_n$ is given by $s\mapsto \exp(2\pi i/n) s$. This proves part (2). 

It remains to show that $Y$ is connected, which is a consequence of $n=\text{lcm}\{m_i,m_j\}$ being minimal. To see this, suppose $Y$ is not connected, and let $Y_0$ be a connected component of $Y$. Then there is a factor $m>1$ of $n$ such that $\lambda^m$ generates the subgroup of $\Z_n$ which leaves $Y_0$ invariant. With this understood, note that the action of $\lambda^m$ is
trivial on $K_{Y_0}^{n/m}=K_Y^{n/m}|_{Y_0}=(\pi^\ast L)^{n/m}|_{Y_0}$. Since 
$(\pi^\ast L)^{n/m}|_{Y_0}$ is trivial, it follows that $L^{n/m}$ must be the trivial orbifold complex line bundle over $X$. But this contradicts Lemma 2.1, hence $Y$ is connected. The proof of Theorem 1.1 is complete. 

\vspace{2mm}

Next we prove Theorem 1.2. The proof relies heavily on the fixed-point set analysis of symplectic finite cyclic actions on symplectic Calabi-Yau $4$-manifolds with $b_1>0$ carried out in \cite{C}. 
Furthermore, some standard facts about the topology of symplectic Calabi-Yau $4$-manifolds will also be used in the proof.

We shall first have a recollection of these facts, see \cite{Bauer, Li, Li1} for more details. 
Let $M$ be a symplectic Calabi-Yau $4$-manifold. Then $M$ is either an integral homology $K3$ surface or a rational homology $T^2$-bundle over $T^2$. The latter case corresponds to $M$ having positive first Betti number, and we should note the following facts about this case, which are used frequently:
$$
\chi(M)=Sign(M)=0, \;\;  b_2^{+}(M)=b_2^{-}(M)=b_1(M)-1, \mbox{ and } 2\leq b_1(M)\leq 4.
$$
Moreover, if $b_1(M)=4$, then $M$ must be a rational homology $T^4$. 

\vspace{2mm}

\noindent{\bf Proof of Theorem 1.2:}

\vspace{2mm}

We begin by noting that the two cases $b_1(X)>0$ and $b_1(X)=0$ correspond to the resolution
$\tilde{X}$ being irrational ruled and rational respectively. On the other hand, since we will consider the symplectic $\Z_n$-action on $\tilde{Y}$, it is useful to note that, as $Y/\Z_n=X$ as orbifolds, the resolution of the quotient orbifold $\tilde{Y}/\Z_n$ is in the same symplectic birational equivalence class with $\tilde{X}$ (cf. \cite{C1}, Theorem 1.5(3)).

\vspace{2mm}

{\bf Case (1): $b_1(X)>0$.} In this case, the resolution of $\tilde{Y}/\Z_n$ is irrational ruled as we noted 
above. By Theorem 1.1 of \cite{C}, $\tilde{Y}$ is a $T^2$-bundle over $T^2$, and moreover, from 
its proof in \cite{C}, we also know that the fibers of the $T^2$-bundle are symplectic. 

To see that $Y=\tilde{Y}$, we note that $Y$ must be smooth. This is because if $Y$ has a singular
point, then its minimal resolution in $\tilde{Y}$ gives a configuration of symplectic spheres in
$\tilde{Y}$, contradicting the fact that $\pi_2(\tilde{Y})=0$ (as $\tilde{Y}$ is a $T^2$-bundle over 
$T^2$). This proves that $Y=\tilde{Y}$. Note
that as a consequence of $Y$ being smooth, $X$ has no singular point $q_j$ with $H_j$ nontrivial
(cf. Theorem 1.1(1)). With this understood, the proof of Theorem 1.2(1) is completed by the following
lemma.

\begin{lemma}
There is no isolated singular point $q_j$ of $X$ with $m_j>1$, and any $2$-dimensional singular component of $X$ is a torus of self-intersection zero.
\end{lemma}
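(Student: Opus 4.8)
The plan is to reduce both assertions to statements about the fixed-point set of the symplectic $\Z_n$-action on $Y$ and then to feed them into the fixed-point analysis of \cite{C}. In the case at hand we have already shown that $Y=\tilde{Y}$ is smooth and is a $T^2$-bundle over $T^2$ with symplectic fibers, so every singular point or singular component of $X=Y/\Z_n$ is the image of a point of $Y$ with nontrivial stabilizer in $\Z_n$. By Theorem 1.1(3) this dictionary is explicit: an isolated singular point $q_j$ with $m_j>1$ corresponds to an isolated fixed point of an order-$m_j$ element $g_j\in\Z_n$, while a $2$-dimensional singular component $\Sigma_i$ is covered by the $2$-dimensional components of $\text{Fix}(g_i)$ for an order-$m_i$ element $g_i\in\Z_n$. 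Hence it suffices to analyze the actions of the cyclic subgroups $\langle g_i\rangle\cong\Z_{m_i}$ and $\langle g_j\rangle\cong\Z_{m_j}$, and to show that such actions have no isolated fixed points and that their $2$-dimensional fixed components are symplectic tori of self-intersection zero.

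The main engine is the fixed-point analysis of symplectic finite cyclic actions on symplectic Calabi--Yau $4$-manifolds with $b_1>0$ from \cite{C}, which I would apply to $\langle g_i\rangle$ and $\langle g_j\rangle$ acting on $\tilde{Y}$. Recall first that a $2$-dimensional component $F$ of the fixed-point set of a symplectic finite-order map is a symplectic surface; since $K_{\tilde{Y}}=0$ the adjunction formula reads $2g(F)-2=F^2$, so such an $F$ is a torus precisely when $F^2=0$. Thus the two clauses of the lemma are two facets of the single statement that the fixed-point set consists of self-intersection-zero tori with no isolated points, which is exactly the conclusion I would extract from \cite{C} (the underlying mechanism being an equivariant index computation, e.g. for the Spin$^c$ Dirac operator, which forces the contributions of isolated fixed points and of fixed surfaces of genus $\geq 2$ to vanish on a manifold with $\chi=\text{Sign}=0$ and trivial canonical class).

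With this input in hand the conclusion is immediate. The absence of isolated fixed points for $\langle g_j\rangle$ yields assertion (a): there is no isolated $q_j$ with $m_j>1$, since such a point would produce, by Theorem 1.1(3), an isolated fixed point of $g_j$ on the smooth manifold $\tilde{Y}=Y$. For a $2$-dimensional component $\Sigma_i$, choose a component $\tilde{\Sigma}$ of $\pi^{-1}(\Sigma_i)$; by the analysis $\tilde{\Sigma}$ is a symplectic torus with $\tilde{\Sigma}^2=0$ in $Y$. Since $\langle g_i\rangle=\Z_{m_i}$ is precisely the stabilizer of $\tilde{\Sigma}$ and fixes it pointwise, the restriction $\pi\colon\tilde{\Sigma}\to\Sigma_i$ is a diffeomorphism, so $\Sigma_i$ is a torus; and because $\pi$ is locally the quotient by the rotation action of $\Z_{m_i}$ on the normal disc bundle, the (orbifold) self-intersection of $\Sigma_i$ equals $\tilde{\Sigma}^2/m_i=0$, giving assertion (b). The main obstacle is the fixed-point analysis itself, i.e. establishing (or correctly specializing from \cite{C}) that a symplectic finite cyclic action on a symplectic $T^2$-bundle over $T^2$ admits neither isolated fixed points nor fixed surfaces of genus $\geq 2$; by contrast, the remaining bookkeeping—identifying which cyclic subgroup fixes which locus via Theorem 1.1(3), and descending the self-intersection through the rotation quotient—is routine once that input is secured.
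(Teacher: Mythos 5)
Your reduction to the fixed-point structure of prime-order subgroups of $\Z_n$ acting on $Y=\tilde{Y}$, and the descent bookkeeping at the end, do match the paper's framework. But the ``main engine'' you invoke is false as you state it: it is \emph{not} true that a symplectic finite cyclic action on a symplectic Calabi--Yau $4$-manifold with $b_1>0$ (nor even on an honest $T^2$-bundle over $T^2$) has no isolated fixed points. The standard $\Z_3$-action on $T^4=E_\omega\times E_\omega$ given by $(z_1,z_2)\mapsto(\omega z_1,\omega z_2)$, $\omega=e^{2\pi i/3}$, is symplectic and has $9$ isolated fixed points; such actions occur inside this very paper, as case (i) of Theorem 1.2(2), where $\tilde{Y}$ is a rational homology $T^4$ and the $\Z_3$-action has $9$ isolated fixed points. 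Your heuristic justification (an equivariant index/Lefschetz computation forced to vanish because $\chi=\mathrm{Sign}=0$ and the canonical class is trivial) does not hold: the Lefschetz number of a nontrivial element need not equal $\chi$, and in the $T^4$ example it equals $9$. What is actually true --- and what the paper uses --- is the \emph{conditional} statement of Theorem 1.2 of \cite{C}: for $H$ of prime order, if the resolution of $Y/H$ is irrational ruled, then $Y^H$ consists only of tori of self-intersection zero; but if that resolution is rational, isolated fixed points do occur ($9$ points for order $3$, $5$ points for order $5$).

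So the real content of the lemma, which your proposal leaves unaddressed, is ruling out the rational case for every prime-order subgroup $H$ fixing the relevant locus; this is precisely where the hypothesis $b_1(X)>0$ must enter, and nothing in your argument uses it. The paper does this by a Betti-number comparison: $H^1(Y/\Z_n;\R)=H^1(Y;\R)^{\Z_n}\subseteq H^1(Y;\R)^{H}=H^1(Y/H;\R)$, so $b_1(Y/H)\geq b_1(Y/\Z_n)=b_1(X)>0$; since the resolution of $Y/H$ has the same first Betti number as the orbifold $Y/H$, it cannot be rational and hence is irrational ruled. Only at that point does Theorem 1.2(2) of \cite{C} apply, yielding the contradiction with an isolated fixed point $\tilde{q}_j$ (for the first clause) and the torus/self-intersection-zero conclusion for the components of $\pi^{-1}(\Sigma_i)$ (for the second). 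Without this step, the dichotomy of \cite{C} leaves open exactly the configurations you need to exclude, and your proof does not go through.
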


\begin{proof}
Suppose to the contrary that there is a $q_j$ with $m_j>1$. We pick a 
$\tilde{q}_j\in \pi^{-1}(q_j)\subset Y$, which, by Theorem 1.1, is fixed by an element of $\Z_n$
of order $m_j$. We let $H$ be a subgroup of $\Z_n$ of prime order which fixes $\tilde{q}_j$.
Then since $H_j$ is trivial, it is easy to see, from the proof of Theorem 1.1, 
that the image of $\tilde{q}_j$ in $Y/H$ is a 
non-Du Val singularity. By Lemma 4.1 of \cite{C1}, the resolution of $Y/H$ is either rational or ruled. 
We claim that the resolution of $Y/H$ must be irrational ruled. To see this, note that 
the resolution of $Y/\Z_n$ and the orbifold $Y/\Z_n$ itself have the same
first Betti number. Since the resolution of $Y/Z_n$ (which is $\tilde{Y}/\Z_n$ as $Y=\tilde{Y}$) is irrational ruled, it follows that $b_1(Y/\Z_n)>0$, which implies $b_1(Y/H)\geq b_1(Y/\Z_n)>0$.
This shows that the resolution of $Y/H$ is irrational ruled because it has the same first Betti
number with $Y/H$. By Theorem 1.2(2) of \cite{C}, the resolution of $Y/H$ being irrational ruled 
implies that the action of $H$ on $Y$ has only $2$-dimensional fixed components, which are 
tori of self-intersection zero. But this is a contradiction as $\tilde{q}_j$ is an isolated fixed
point of $H$. 

By the same argument, any singular component $\Sigma_i$ of $X$ must be a torus of self-intersection zero. This finishes the proof of the lemma.

\end{proof}

{\bf Case (2): $b_1(X)=0$.} In this case, $\tilde{X}$ is rational, so is the resolution of $\tilde{Y}/\Z_n$.

We first assume $\tilde{Y}$ is a symplectic Calabi-Yau $4$-manifold with $b_1>0$. We shall prove 
that $Y=\tilde{Y}$ and $b_1(\tilde{Y})=4$, and determine the singular set of $X$.

\begin{lemma}
There are no singular points in $Y$. Moreover, $b_1(\tilde{Y})\neq 3$.
\end{lemma}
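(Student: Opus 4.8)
The plan is to prove both claims by contradiction, using the symplectic $\Z_n$-action on $\tilde{Y}$ together with the hypothesis $b_1(X)=0$. The key constraint from this hypothesis is that the resolution of $\tilde{Y}/\Z_n$ is rational, so $b_1(\tilde{Y}/\Z_n)=0$; since $\tilde{Y}/\Z_n$ is the quotient by a finite group, this is equivalent to $H_1(\tilde{Y};\Q)^{\Z_n}=0$, i.e.\ $\Z_n$ acts on $H_1(\tilde{Y};\Q)\cong\Q^{b_1}$ without nonzero invariant vectors. Throughout I use that $\tilde{Y}$ is a symplectic Calabi--Yau $4$-manifold with $b_1>0$, hence a rational homology $T^2$-bundle over $T^2$ with $\chi=\mathrm{Sign}=0$, $b_2^+=b_2^-=b_1-1$ and $2\le b_1\le 4$, and that the chosen generator $\lambda$ multiplies a trivializing section of $K_{\tilde{Y}}$ by $e^{2\pi i/n}$ (Theorem 1.1(2)).

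For the absence of singular points I proceed as in the proof of Lemma 2.3. If $Y$ were singular it would carry a Du Val singularity by Theorem 1.1(1), and its minimal resolution would embed a nonempty configuration of symplectic spheres into $\tilde{Y}$; by the adjunction formula and $c_1(K_{\tilde{Y}})=0$ each such sphere $S$ satisfies $S^2=-2$, and $[S]\ne 0$ because $\int_S\tilde{\omega}>0$. The point where Case (2) departs from Case (1) is that we no longer know $\tilde{Y}$ to be a genuine $T^2$-bundle over $T^2$, so $\pi_2(\tilde{Y})=0$ is not available; instead I would exclude these spheres through the equivariant fixed-point analysis of \cite{C}. A subgroup of $\Z_n$ stabilizing such a sphere either fixes it pointwise---producing a $2$-dimensional fixed component that is a $(-2)$-sphere, which contradicts that the $2$-dimensional fixed loci of these actions are tori of self-intersection $0$ (\cite{C}, Theorem 1.2)---or it acts on the sphere with isolated fixed points, which I match against the dichotomy of \cite{C} for the birational type of the relevant prime-order quotient. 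The case in which the configuration is permuted freely must be handled separately, by pushing the spheres down to $\tilde{Y}/\Z_n$, whose resolution is rational, and arguing homologically.

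For $b_1\ne 3$ I use the vanishing $H_1(\tilde{Y};\Q)^{\Z_n}=0$ arithmetically. If $b_1=3$, then decomposing $\Q^3$ into rational irreducibles of $\Z_n$---each of dimension $\varphi(d)$, which is even for $d\ge 3$ and equals $1$ only for $d\in\{1,2\}$---is impossible without a copy of the sign representation; hence $2\mid n$ and the involution $\iota:=\lambda^{n/2}$ acts on $H_1(\tilde{Y};\R)$ with $\det(\iota)=-1$, i.e.\ with an odd-dimensional $(-1)$-eigenspace. Geometrically $\iota$ multiplies the Calabi--Yau form by $e^{\pi i}=-1$, so after averaging to an $\iota$-invariant compatible almost complex structure one has $d\iota_p\in U(2)$ with $\det_{\C}(d\iota_p)=-1$ at each fixed point $p$; being an involution, its complex eigenvalues are then $\{+1,-1\}$, so every fixed point lies on a $2$-dimensional fixed component. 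I would then feed this fixed-point structure, together with the parity of the $(-1)$-eigenspace on $H_1$, into the classification of such involutions in \cite{C}, where the allowed configurations are incompatible with $b_1=3$.

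The main obstacle is the exclusion of the symplectic $(-2)$-spheres in the first part: since asphericity of $\tilde{Y}$ is unavailable here, the argument must be carried out entirely through the $\Z_n$-equivariant fixed-point classification of \cite{C}, and the delicate point is to verify that every possible configuration of resolution spheres---in particular the freely permuted ones, and the borderline case $b_1=2$ where $b_2^+=1$ and the Seiberg--Witten--Taubes inputs are weakest---genuinely yields an incompatibility. In the second part the representation-theoretic reduction is routine, and the real content is to extract the correct fixed-point data from the Calabi--Yau scaling and match it against the constraints of \cite{C}.
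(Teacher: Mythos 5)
Both halves of your proposal have genuine gaps, and in each case the gap sits exactly where you defer to ``matching against the dichotomy of \cite{C}'' or to a separate homological argument. For the first half, the freely-permuted configuration is not a marginal case: it occurs precisely when $X$ has a Du Val singular point $q_j$ (then $m_j=1$, and the $n$ points of $\pi^{-1}(q_j)$ are permuted simply transitively, with trivial stabilizers). Your plan for it---push the spheres down to $\tilde{Y}/\Z_n$, whose resolution is rational, and ``argue homologically''---cannot produce a contradiction: rational surfaces, and orbifolds with rational resolution, contain symplectic $(-2)$-spheres in abundance, so there is no obstruction downstairs; the obstruction must live upstairs in $\tilde{Y}$, and it depends on $b_1(\tilde{Y})$. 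The paper uses two inputs you never invoke: when $b_1(\tilde{Y})=4$, the cohomology ring $H^\ast(\tilde{Y};\R)$ is isomorphic to that of $T^4$ by \cite{Sr}, so the Hurewicz image in $H_2(\tilde{Y})$ is trivial and no symplectic sphere can exist at all (its class is nonzero since its symplectic area is positive)---this kills every configuration at once, equivariance being irrelevant; when $b_1(\tilde{Y})=2$, one has $b_2^-(\tilde{Y})=1$, so there is at most one exceptional sphere, hence at most one singular point of $Y$, which is therefore fixed by all of $\Z_n$ and the free case cannot arise. Your other deferred case---an invariant $(-2)$-sphere $E$ rotated with two isolated fixed points---is likewise not settled by the dichotomy: the classification in \cite{C} \emph{allows} isolated fixed points (order $3$ with nine, order $5$ with five), so nothing is immediately incompatible. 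The paper's actual step is local: since $H_j=G_j\cap SU(2)=\Z_2=\{\pm I\}$ and the prime $p$ is $2$ or $3$, the preimage $H^\prime\subset G_j$ of the stabilizer is cyclic of order $4$ or $6$, and (because $G_j$ acts freely on the punctured chart) its generator must have weights $(1,1)$, i.e., act by scalars; scalars act trivially on the exceptional $\P^1$, so $E$ is fixed pointwise after all, contradicting that two-dimensional fixed components are tori.

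For the second half, your representation-theoretic reduction ($2\mid n$, an involution $\iota=\lambda^{n/2}$ with odd-dimensional $(-1)$-eigenspace, and only two-dimensional fixed components) is correct and is a genuinely different starting point from the paper's, but the endgame does not close. In the decisive subcase where $H_1(\tilde{Y};\Q)=(\mathrm{sign})\oplus(\text{2-dimensional irreducible})$ with $\iota$ acting trivially on the $2$-dimensional piece, the quotient $\tilde{Y}/\iota$ has $b_1=2$, its resolution is irrational ruled, and by Theorem 1.2(2) of \cite{C} the fixed set of $\iota$ consists of tori of square zero---which is exactly the structure you derived. Nothing is incompatible, so no contradiction appears. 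The paper instead applies Lemma 2.6 of \cite{C} to the \emph{full} group $\Z_n$, whose quotient resolution \emph{is} rational (being birationally equivalent to $\tilde{X}$); this forces $n=2$, and an involution whose quotient has rational resolution is eliminated in \cite{C} (there $\tilde{Y}$ is shown to be diffeomorphic to a hyperelliptic surface, so $b_1=2$). If you wish to keep your route, you must run the parity constraint on other prime-order subgroups as well: e.g., in the subcase above the $2$-dimensional irreducible factors through $\Z_3$, and the order-$3$ subgroup of $\Z_n$ acts trivially on the sign part, hence has an odd-dimensional invariant subspace in $H_1$; since every prime-order subgroup of $\Z_n$ acts non-freely (Theorem 1.1(3)), its quotient resolution is rational or ruled and thus has $b_1$ zero or even---a contradiction. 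Your sketch contains none of this.
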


\begin{proof}
Note that $2\leq b_1(\tilde{Y})\leq 4$. We shall first consider the case where $b_1(\tilde{Y})=4$.
In this case, $\tilde{Y}$ is a rational homology $T^4$. To see that $Y$ is smooth, we recall
the following fact from \cite{Sr}, that is, the cohomology ring $H^\ast(\tilde{Y};\R)$ is isomorphic 
to the cohomology ring $H^\ast(T^4;\R)$. A consequence of this is that the Hurwitz map $\pi_2(\tilde{Y})\rightarrow H_2(\tilde{Y})$ has trivial image. If $Y$ has singularities, then the exceptional set of their resolutions in $\tilde{Y}$ consists of symplectic $(-2)$-spheres, which is a contradiction. The lemma 
is proved for the case where $b_1(\tilde{Y})=4$.

Next, consider the case where $b_1(\tilde{Y})=3$. For this case, we recall Lemma 2.6 in \cite{C}
which says that under the condition $b_1(\tilde{Y})=3$, if the resolution of $\tilde{Y}/\Z_n$ is
rational or ruled, then the $\Z_n$-action must be an involution. On the other hand, later in the proof
of Theorem 1.1 of \cite{C}, the case where the resolution of $\tilde{Y}/\Z_n$ is rational is actually
eliminated, as it was shown in this case that $\tilde{Y}$ must be diffeomorphic to a hyperelliptic 
surface (in particular, $b_1(\tilde{Y})=2$ which is a contradiction). Hence $b_1(\tilde{Y})\neq 3$.

Finally, assume $b_1(\tilde{Y})=2$. In this case, $b_2^{-}(\tilde{Y})=1$, which implies easily 
that $Y$ can have at most one singular point. Let $\tilde{q}_j\in Y$ be such a singular point and
let $q_j=\pi(\tilde{q}_j)$ be the singular point in $X$. Note that $b_2^{-}(\tilde{Y})=1$ implies that
the exceptional set of $\tilde{q}_j$ in the minimal resolution in $\tilde{Y}$ consists of a single 
symplectic $(-2)$-sphere, which we denote by $E$. Furthermore, it is easy to see that $H_j=\Z_2$
and $n/m_j=1$, as $\pi^{-1}(q_j)$ consists of only one point $\tilde{q}_j$. To derive a contradiction,
we let $H$ be any prime order subgroup of $\Z_n$ which acts on $Y$ and $\tilde{Y}$, and let $p$ be the
order of $H$. Note that $H$ fixes the point $\tilde{q}_j\in Y$, so the $H$-action on $\tilde{Y}$
leave the $(-2)$-sphere $E$ invariant. Finally, according to the fixed-point set analysis of prime order
symplectic actions given in Theorem 1.2 of \cite{C}, the order $p$ must be $2$ or $3$. Furthermore,
the action of $H$ on $\tilde{Y}$ can only have tori of self-intersection zero as fixed components. 

We claim that $H$ must fix the $(-2)$-sphere $E$, which is a contradiction to the classification of the fixed components 
of $H$ mentioned above. To see this, note that from the proof of Theorem 1.1,
it is easy to see that the isotropy subgroup of $\tilde{q}_j\in Y$ of the $\Z_n$-action on $Y$ can be
naturally identified with $G_j/H_j$. Since $\tilde{q}_j$ is fixed by $H$, we may regard $H$ as
a subgroup of $G_j/H_j$. Let $H^\prime\subset G_j$ be the pre-image of $H$ under 
$G_j\rightarrow G_j/H_j$. Then it follows easily from the fact that $H_j=G_j\cap SU(2)$, $H_j=\Z_2$
and $H$ has order $2$ or $3$, that $H^\prime$ must be a cyclic group of order $4$ or $6$.
Furthermore, the action of $H^\prime$ (as a subgroup of $G_j$) on the uniformizing system at
$q_j$ has weights $(1,1)$. It follows immediately that the $(-2)$-sphere $E$ in $\tilde{Y}$
is fixed under the $H$-action. This finishes the proof of the lemma.

\end{proof}

Since $Y$ is nonsingular by Lemma 2.3, we see immediately that $Y=\tilde{Y}$. 
We shall next prove $b_1(Y)=4$ and $n=3$ or $5$. But first, note that
Lemma 2.3 implies that for any singular point $q_j$ of $X$, the group $H_j$ is trivial. Consequently,
for any prime order subgroup $H$ of $\Z_n$ acting on $Y$, the orbifold $Y/H$ does not have any
Du Val singularity. On the other hand, since $n=\text{lcm}\{m_i,m_j\}$, the order $p$ of $H$ must
be a factor of one of $m_i$ or $m_j$. It follows from Theorem 1.1(3) that the $H$-action on $Y$
must fix either a component in $\pi^{-1}(\Sigma_i)$ for some $i$, or a point in $\pi^{-1}(q_j)$ for
some $j$; in particular, the action of $H$ on $Y$ is not free. By Lemma 4.1 of \cite{C1}, the
resolution of $Y/H$ is either rational or ruled.

\begin{lemma}
There must be a subgroup $H$ of prime order such that the resolution of $Y/H$ is rational. 
\end{lemma}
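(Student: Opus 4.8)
The plan is to argue by contradiction: suppose that for every prime-order subgroup $H\subset\Z_n$ the resolution of $Y/H$ is irrational ruled rather than rational. Since the two alternatives in Lemma 4.1 of \cite{C1} are distinguished by the first Betti number (which is unchanged under resolution, as resolving only adds spheres), this assumption says precisely that $H^1(Y;\Q)^{H}\neq 0$ for every prime $p\mid n$ and its associated order-$p$ subgroup $H=H_p$. On the other hand, since $Y/\Z_n=X$ as orbifolds and the resolution of $Y/\Z_n=\tilde{Y}/\Z_n$ lies in the same symplectic birational class as $\tilde{X}$, which is rational because $b_1(X)=0$, we have $b_1(Y/\Z_n)=0$, i.e. $H^1(Y;\Q)^{\Z_n}=0$. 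The whole argument is thus driven by the tension between $H^1(Y;\Q)^{\Z_n}=0$ and $H^1(Y;\Q)^{H_p}\neq 0$ for all $p$.

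First I would pin down the fixed-point structure forced by the assumption. By the fixed-point analysis of prime-order symplectic actions on symplectic Calabi--Yau $4$-manifolds (Theorem 1.2 of \cite{C}), the hypothesis that the resolution of $Y/H_p$ is irrational ruled forces $\mathrm{Fix}(H_p)$ to consist solely of tori of self-intersection zero; in particular $H_p$ has no isolated fixed point. I would combine this with condition (ii) and Lemma 2.3: at any isolated singular point $q_j$ of $X$ we have $H_j=\{1\}$ by Lemma 2.3, so the isotropy of a chosen $\tilde{q}_j\in\pi^{-1}(q_j)$ is $G_j=\Z_{m_j}$ with $m_j>1$, acting on the tangent space with no eigenvalue $1$ for any nontrivial element (this is exactly what condition (ii) guarantees, namely that $q_j$ is an isolated rather than a $2$-dimensional singularity). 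Hence for any prime $p\mid m_j$ the order-$p$ element of $\Z_n$ fixes $\tilde{q}_j$ in isolation, contradicting the tori-only conclusion above. Therefore, under the standing assumption, $X$ has no isolated singular points, and its singular set is a disjoint union of $2$-dimensional components $\Sigma_i$ with $n=\text{lcm}\{m_i\}$.

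It remains to derive a contradiction in this purely $2$-dimensional situation, and here I would bring in the global structure of $Y=\tilde{Y}$: being a symplectic Calabi--Yau $4$-manifold with $b_1>0$ it is a $T^2$-bundle over $T^2$, and by the analysis in \cite{C} the symplectic $\Z_n$-action preserves this fibration and induces an action on the base torus. Translating the cohomological data through the fibration, $H^1(Y;\Q)^{\Z_n}=0$ forces the induced action of $\Z_n$ on the first cohomology of the base to have no invariants, i.e. the base action is an elliptic rotation of some order $e\geq 2$ with orbifold-sphere quotient, while $H^1(Y;\Q)^{H_p}\neq 0$ for every prime $p$ constrains how the prime subgroups sit relative to this rotation. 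The point I would exploit is that the genus of the base drops under a quotient only by an element that rotates the base; I would then show that the order-$e$ rotation must already be realized, elliptically and with fixed points on the base, by a prime subgroup $H_q$ with $q\mid e$. For that $H_q$ the quotient $Y/H_q$ fibers over the sphere quotient of the base and hence resolves to a rational surface, contradicting the standing assumption. The absence of isolated singular points, established in the second paragraph, is what rules out the degenerate alternative in which the base rotation is carried only by a non-prime power of the generator.

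The hard part will be this last step, namely reducing the base genus through a single \emph{prime} subgroup. The subtlety is a valuation obstruction: a cyclic group can rotate the base with order $e$ strictly dividing $n$ and not evenly distributed among the primes ($v_q(e)<v_q(n)$), so that a priori no prime subgroup inherits the full rotation. I expect to dispose of exactly these configurations by showing they force an isolated fixed point of some power of the generator, contradicting condition (ii) and Lemma 2.3 once more; controlling the tangential weights at such fixed points via the equivariant triviality of $K_Y$ from Theorem 1.1(2) is where the genuine work lies. The cohomological and birational inputs in the first two paragraphs are, by contrast, essentially formal.
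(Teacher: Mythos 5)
Your opening reduction is correct and is essentially the same as the paper's first step: under the contradiction hypothesis, Theorem 1.2(2) of \cite{C} forces each prime-order subgroup to have only $2$-dimensional fixed components (tori of self-intersection zero), and combining this with Lemma 2.3 rules out isolated singular points of $X$, so the singular set of $X$ consists of disjoint surfaces only. From there on, however, your plan has genuine gaps. The first is structural: the assertion that $Y$, ``being a symplectic Calabi--Yau $4$-manifold with $b_1>0$, is a $T^2$-bundle over $T^2$'' is not a theorem --- it is precisely the open conjecture in this subject; what is known (\cite{Bauer}, \cite{Li}) is only that $Y$ has the \emph{rational homology} of such a bundle. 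Inside the standing assumption you could instead invoke Theorem 1.1 of \cite{C} (since $b_1(Y/\Gamma)>0$ for a prime-order subgroup $\Gamma$) to obtain an actual $T^2$-bundle structure, but you would still owe the claim, asserted only ``by the analysis in \cite{C}'', that this fibration can be chosen $\Z_n$-equivariantly.

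The decisive gap is the inference ``$H_q$ rotates the base, so $Y/H_q$ fibers over the sphere quotient of the base and hence resolves to a rational surface.'' This is false as stated: a $T^2$-fibration over a sphere (or sphere orbifold) can perfectly well have $b_1=2$, as $\s^2\times T^2$ already shows. Rationality of the resolution is equivalent to $H^1(Y;\Q)^{H_q}=0$, and when $b_1(Y)=4$ --- a case not excluded at this stage, since Lemma 2.3 only eliminates $b_1=3$, and the present lemma is exactly what is later used to prove $b_1(Y)=4$ --- the space $H^1(Y;\Q)$ contains a two-dimensional fiber-direction part that a rotation of the base does not touch; your argument never addresses it, so it fails in the main case. (In addition, the ``valuation obstruction'' configurations are only ``expected'' to be disposed of, and the mechanism you propose is itself problematic: an element rotating the base can act freely, by translations, on each of its invariant fibers, producing no fixed points in $Y$ at all.) The paper circumvents all of this by fibering the quotient rather than $Y$: it passes to $|Y/\Gamma|$, which \cite{C} does exhibit as an $\s^2$-bundle over $T^2$ whose fibers are $J$-holomorphic for a $\Z_n/\Gamma$-invariant $J$; the residual $\Z_n/\Gamma$-action on the base is homologically nontrivial because $b_1(X)=0$, hence some element leaves an $\s^2$-fiber invariant, and then either alternative --- the fiber is fixed pointwise, or exactly two points on it are fixed --- contradicts the established structure of the singular set of $X$ (disjoint $2$-dimensional components, no isolated points), rather than attempting to produce a prime subgroup with rational quotient.
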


\begin{proof} 
Suppose to the contrary that for every subgroup of prime order, the resolution of the group action is irrational ruled. Then it follows easily from Theorem 1.2(2) of \cite{C} that the orbifold $X=Y/\Z_n$ has only $2$-dimensional singular components. 

We pick a subgroup $\Gamma$ of prime order. Since $\Gamma$ has no isolated fixed points, the resolution of $Y/\Gamma$ is simply the underlying space $|Y/\Gamma|$, which is a $\s^2$-bundle over $T^2$ (cf. \cite{C}, Theorem 1.2(2)). 
Moreover, the fixed-point set of $\Gamma$ consists of tori of self-intersection zero 
whose images in $|Y/\Gamma|$ intersect transversely with the fibers of the $\s^2$-bundle
(see the proof of Theorem 1.1 in \cite{C}). In fact, more is proved in \cite{C}, i.e., for any compatible
almost complex structure $J$ on $|Y/\Gamma|$ which is integral near the fixed-point set of 
$\Gamma$, the $\s^2$-bundle on $|Y/\Gamma|$ can be chosen to have $J$-holomorphic fibers.
With this understood, we consider the induced action of $\Z_n/\Gamma$ on $|Y/\Gamma|$,
which can be made symplectic (cf. \cite{C1}). It is clear that the fixed-point set of $\Gamma$ is
invariant under $\Z_n/\Gamma$, so that we may choose $J$ to be $\Z_n/\Gamma$-invariant. 

To derive a contradiction, note that the $\Z_n/\Gamma$-action on $|Y/\Gamma|$ preserves the
$\s^2$-bundle structure on $|Y/\Gamma|$, so that there is an induced action of $\Z_n/\Gamma$
on the base of the $\s^2$-bundle, which is $T^2$. Now since $X=Y/\Z_n$ and $b_1(X)=0$, 
it follows easily that the $\Z_n/\Gamma$-action on $|Y/\Gamma|$ must induce a homologically nontrivial action on the base $T^2$. This implies that there must be an element 
$g\in \Z_n/\Gamma$ which leaves a $\s^2$-fiber invariant. If $g$ fixes the $\s^2$-fiber, then $X$
would have $2$-dimensional singular components that are not disjoint, because the $\s^2$-fiber
fixed by $g$ intersects with the fixed-point set of $\Gamma$. This is a contradiction, so $g$ must
fix two isolated points on the $\s^2$-fiber. It is clear that these two isolated fixed points of $g$ 
can not be contained in the fixed-point set of $\Gamma$, hence give two isolated singular 
points of $X$. This is also a contradiction as $X$ has only $2$-dimensional singular components. 
Hence the claim that there must be a subgroup $H$ of prime order such that the resolution 
of $Y/H$ is rational. This proves the lemma.

\end{proof}

An immediate consequence of Lemma 2.4 is that $b_1(Y)=4$. This is because if $b_1(Y)\neq 4$,
then $b_1(Y)=2$ by Lemma 2.3. In this case, let $H$ be a subgroup of $\Z_n$ of prime order such
that the resolution of $Y/H$ is rational. Then by the classification of fixed-point set structures 
in Theorem 1.2(3) of \cite{C}, $Y/H$ has Du Val singularities. But this is a contradiction to 
Lemma 2.3, hence $b_1(Y)=4$. 

\begin{lemma}
There is only one subgroup $H$ of prime order such that the resolution of $Y/H$ is rational. 
Moreover, the order of $H$ is either $3$ or $5$, and $n$ does not have any prime factor which is
not equal to the order of $H$. 
\end{lemma}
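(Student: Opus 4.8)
The plan is to reduce the whole statement to the representation $\rho$ of $\Z_n$ on $H^1(Y;\Q)\cong\Q^4$, using two basic dictionaries. First, since the symplectic resolution does not change the first Betti number, the resolution of $Y/H$ is rational exactly when $b_1(Y/H)=\dim H^1(Y;\Q)^{H}=0$, and is irrational ruled exactly when this dimension equals $2g>0$, where $g$ is the genus of the base of the ruled surface. Second, I will record two structural facts about $\rho$: it has no trivial summand, because $\dim H^1(Y;\Q)^{\Z_n}=b_1(X)=0$; and, by Theorem 1.1(2), a generator $\lambda$ acts on the trivializing section of $K_Y$ by a primitive $n$-th root of unity, so with respect to a $\Z_n$-invariant compatible almost complex structure the two eigenvalues $a_1,a_2$ of $\lambda$ on $H^{1,0}$ are roots of unity with $a_1a_2$ of exact order $n$, and the four eigenvalues of $\lambda$ on $H^1(Y;\C)$ are $a_1,a_2,\bar a_1,\bar a_2$.

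First I would pin down the order of $H$. If the resolution of $Y/H$ is rational, then $H\cong\Z_p$ acts on $\Q^4$ with no nonzero invariants, so $\Q^4$ is a sum of nontrivial rational irreducible $\Z_p$-modules, each of dimension $\varphi(p)=p-1$; hence $(p-1)\mid 4$, giving $p\in\{2,3,5\}$. The value $p=2$ is excluded at once: an isolated fixed point of a symplectic involution has local model $-\mathrm{id}\in SU(2)$, so it would produce a Du Val ($A_1$) singularity in $Y/H$, contradicting the fact (a consequence of Lemma 2.3) that $Y/H$ carries no Du Val singularity. Thus $H_2$ has only $2$-dimensional fixed components and its quotient is ruled, never rational, and the order of a rational-resolution subgroup is $3$ or $5$.

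For uniqueness I would write $\rho=\bigoplus_k\rho_{d_k}$ as a sum of rational irreducibles, with $d_k\mid n$, $d_k>1$, and $\sum_k\varphi(d_k)=4$. The condition that $H_p$ has no invariants is exactly that $p\mid d_k$ for every $k$. If two distinct primes $p,q\in\{3,5\}$ both gave rational resolutions, then $pq\mid d_k$ for all $k$, whence $\varphi(d_k)\ge(p-1)(q-1)=8>4$, which is absurd. Together with Lemma 2.4 this gives precisely one such $H$.

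The main obstacle is the final assertion that $n$ has no prime factor other than $p:=\mathrm{ord}(H)$. For any prime $q\mid n$ with $q\ne p$, uniqueness forces the resolution of $Y/H_q$ to be irrational ruled, so by Theorem 1.2(2) of \cite{C} its fixed set consists of tori; then $\chi(\mathrm{Fix}(H_q))=0$ and the multiplicativity of Euler characteristic give $\chi(|Y/H_q|)=0$, so the ruled surface is ruled over a torus and $\dim H^1(Y;\Q)^{H_q}=2$. When $p=5$ one has $\varphi(5)=4$, so $\rho=\rho_5$ is forced; then $\rho_5|_{H_q}$ is trivial for every $q\ne5$, giving $\dim H^1(Y;\Q)^{H_q}=4\ne2$, a contradiction, and the constraint that $a_1a_2$ have exact order $n$ (an order dividing $5$) then sharpens this to $n=5$. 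The delicate case is $p=3$, where $\rho$ is a priori one of $\rho_3^{\oplus2}$, $\rho_3\oplus\rho_6$, $\rho_6^{\oplus2}$, $\rho_{12}$. The block $\rho_3^{\oplus2}$ is handled exactly as for $p=5$ (any $q\ne3$ acts trivially, forcing $b_1=4$, and the determinant constraint gives $n=3$), but the mixed possibilities carrying a primitive sixth- or twelfth-root block remain consistent with $\dim H^1(Y;\Q)^{H_2}=2$ and are \emph{not} eliminated by Betti-number or Euler-characteristic bookkeeping alone. To close this gap I would bring in the precise local data of Theorem 1.2(3) of \cite{C} — that the $9$ fixed points of $H_3$ are all of type $(1,1)$, so that $A^{n/3}$ is scalar — and combine it with an analysis of the residual action of $\Z_n/H_q$ on the elliptic ruled surface $|Y/H_q|$, in order to show that a second prime factor cannot coexist with the rational ($9$-point) structure. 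I expect this compatibility argument, rather than the representation theory above, to carry the essential weight of the lemma.
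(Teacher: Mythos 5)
Your framework---reading everything off the $\Z_n$-representation on $H^1(Y;\Q)\cong\Q^4$---is a genuinely different route from the paper's, and your uniqueness step is in fact a correct alternative proof: if $H_p$ and $H_q$ ($p,q\in\{3,5\}$ distinct) both had rational quotient resolutions, each rational irreducible summand $\Q(\zeta_{d_k})$ would have to be moved nontrivially by both subgroups, forcing $pq\mid d_k$ and $\varphi(d_k)\geq 8>4$. (One caveat: your dictionary ``no invariants exactly when $p\mid d_k$'' is false as an equivalence---the order-$3$ subgroup of $\Z_9$ acts trivially on $\Q(\zeta_3)$; the correct condition is $d_k\nmid n/p$---but you only use the correct direction, so uniqueness survives.) The paper instead proves uniqueness by counting: $H_5$ permutes the $9$ points of $Y^{H_3}$ and must fix exactly $4$ of them (it cannot fix all $9$ since $|Y^{H_5}|=5$), while $H_3$ acting on the $5$ points of $Y^{H_5}$ fixes either $2$ or all $5$; the two counts of $Y^{H_3}\cap Y^{H_5}$ are incompatible.

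However, there are genuine gaps, one of them fatal. First, your exclusion of $p=2$ correctly rules out isolated fixed points via the Du Val observation, but the ensuing claim that an involution with only $2$-dimensional fixed components has ``ruled, never rational'' quotient is asserted, not proved; it can be filled (if $H^1(Y;\Q)^{H_2}=0$ then $H_2$ acts as $-\mathrm{id}$ on $H^1$ and, using the ring isomorphism $H^\ast(Y;\R)\cong H^\ast(T^4;\R)$ from the proof of Lemma 2.3, as $+\mathrm{id}$ on $H^2$, so the Lefschetz number gives $\chi(Y^{H_2})=16>0$, forcing a fixed sphere, impossible in a rational homology $T^4$), or one simply cites Theorem 1.2(3) of \cite{C} as the paper does---which also yields the orders $3,5$ and the fixed-point counts outright. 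Second, and decisively, the third assertion---that $n$ has no prime factor other than $\mathrm{ord}(H)$---is exactly what your method does not close, as you yourself admit: for $p=3$ the types $\rho_3\oplus\rho_6$, $\rho_6^{\oplus 2}$, $\rho_{12}$ pass every Betti-number and Euler-characteristic test, and your final paragraph is a plan, not a proof (it moreover invokes eigenvalues on ``$H^{1,0}$'', which is not available for a symplectic, possibly non-K\"{a}hler $Y$; your $p=5$ case also overlooks $\rho_{10}$, which has $\varphi(10)=4$). The idea you are missing is elementary and is what carries the paper's proof: $Y^H$ is a \emph{finite} set of $9$ (resp.\ $5$) isolated points; any subgroup $\Gamma$ of prime order $q\neq\mathrm{ord}(H)$ commutes with $H$ and hence permutes $Y^H$; since the resolution of $Y/\Gamma$ is irrational ruled, Theorem 1.2(2) of \cite{C} gives $q\in\{2,3\}$ and $Y^\Gamma$ a union of tori, which is disjoint from $Y^H$ (the singular set of $X$ being, by hypothesis, a disjoint union of embedded surfaces and isolated points); but $9$ is odd and $5$ is prime to both $2$ and $3$, so $\Gamma$ must fix some point of $Y^H$---a contradiction. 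This orbit count settles precisely the mixed cases your representation theory leaves open.
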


\begin{proof}
By Theorem 1.2(3) of \cite{C}, if $H$ is a prime order subgroup such that the resolution of $Y/H$ is rational, then the order of $H$ must be either $3$ or $5$. Moreover, if $H$ has order $3$, the
fixed-point set $Y^H$ consists of $9$ isolated points, and if $H$ has order $5$, $Y^H$ consists 
of $5$ isolated points. With this understood, we note that since $\Z_n$ is cyclic, there is a unique
subgroup of order $p$ for each prime factor $p$ of $n$.

Suppose to the contrary that there are prime order subgroups $H_1,H_2$ of order $3$, $5$
respectively. Then clearly, $Y^{H_1}$ is invariant under $H_2$, and 
$Y^{H_2}$ is invariant under $H_1$. Examining the action of $H_2$ on $Y^{H_1}$, which
consists of $9$ points, it is easy to see that $H_2$ must fix exactly $4$ points in $Y^{H_1}$. 
So $Y^{H_1}\cap Y^{H_2}$ consists of $4$ points. On the other hand, examining the action 
of $H_1$ on $Y^{H_2}$, it either fixes $2$ points or the entire set. This implies that 
$Y^{H_1}\cap Y^{H_2}$ either consists of $2$ points or $5$ points, a contradiction.
Hence the claim that there 
is only one subgroup $H$ of prime order such that the resolution of $Y/H$ is rational, and that
the order of $H$ is either $3$ or $5$.

It remains to show that $n$ does not have any prime factor which is not equal to the order of $H$. 
To see this, suppose to the contrary that there is a prime factor $p$ of $n$, which is not equal to
the order of $H$. Then there is a subgroup $\Gamma$ whose order equals $p$. Note that 
 the resolution of $Y/\Gamma$ must be irrational ruled, hence by Theorem 1.2(2), $p=2$ or $3$. 
 Moreover, the fixed-point set $Y^\Gamma$ consists of a disjoint union of tori of self-intersection zero,
 which is disjoint from $Y^H$. If $H$ has order $3$, then $\Gamma$ must be an involution. 
 Examining the action of $\Gamma$ on $Y^H$, which consists of $9$ points, there must be a
 point fixed by $\Gamma$, a contradiction. If $H$ has order $5$, then $p=2$ or $3$. In any event,
 $\Gamma$ must also fix a point in $Y^H$ since $Y^H$ consists of $5$ points. This shows that
 $\Gamma$ can not exist, and the lemma is proved. 
 
 \end{proof}
 
 As a consequence, $n$ must be a power of either $3$ or $5$. If $n$ is neither $3$ nor $5$, then 
 there must be an element $g\in \Z_n$ of order $9$ or $25$. Let $\theta_1,\theta_2$ be the angles 
 associated to the action of $g$ on $H^1(Y;\R)$ in Lemma 2.7 of \cite{C}. Then by Lemma 2.7 of \cite{C}, the Lefschetz number $L(g,Y)=4(1-\cos\theta_1)(1-\cos\theta_2)$, which is an integer, and 
 $2(\cos\theta_1+\cos\theta_2)\in\Z$. It is easy to check that with the order of $g$ being $9$ or $25$,
 this is not possible. Hence $n=3$ or $5$. 
Now with $n=3$ or $5$, the singular set of $X=Y/\Z_n$ must be as in (i) or (ii) of
Theorem 1.2 by the classification of fixed-point sets in Theorem 1.2(3) of \cite{C}. 

Conversely, if $X$ is given as in (i) or (ii) of Theorem 1.2, then it is clear that $n=3$ or $5$, and $Y=\tilde{Y}$. We claim that $\chi(Y)=0$. To see this, we use the Lefschetz fixed point theorem. For example, 
for the case of $n=3$, we note that the resolution $\tilde{X}=\C\P^2\# 12\overline{\C\P^2}$ because $c_1(K_{\tilde{X}})^2=-3$. This implies that $\chi(X)=\chi(\tilde{X})-9=15-9=6$. By the Lefschetz fixed point theorem, $3\chi(Y/\Z_3)=\chi(Y)+(3-1)\cdot \#Y^{\Z_3}$. With $X=Y/\Z_3$ and 
$\# Y^{\Z_3}=9$, we obtain $\chi(Y)=0$. The case of $n=5$ is similar. Hence our claim 
that $\chi(Y)=0$. It follows that $Y$ has $b_1>0$. By the classification in Theorem 1.2(3) of \cite{C},
we have $b_1(Y)=4$. This finishes the proof of Theorem 1.2. 

\begin{example}
We list a few examples of holomorphic $G$-actions on a hyperelliptic surface or complex torus $M$ such that the quotient orbifold $X=M/G$ does not have the singular set in (i) or (ii) of Theorem 1.2 
and $b_1(X)=0$. Hence by Theorem 1.2, the corresponding symplectic Calabi-Yau $4$-manifold
$\tilde{Y}$ must be a $K3$ surface, with the symplectic $\Z_n$-action defining an automorphism 
of the $K3$ surface. Note that the automorphism must be non-symplectic, because if it were
symplectic, the resolution of $\tilde{Y}/\Z_n$ must also be a $K3$ surface. However, we know that 
the resolution of $\tilde{Y}/\Z_n$ is in the same symplectic birational equivalence class with 
$\tilde{X}$ (cf. \cite{C1}, Theorem 1.5(3)), which is rational. 
An interesting feature of these examples is that the $K3$ surface contains a large number of $(-2)$-curves appearing in various types of configurations in the complement of the fixed-point set of the 
non-symplectic automorphism, coming from the resolution of the Du Val singularities in $X$.

(1) Take a holomorphic involution on a hyperelliptic surface which fixes $2$ tori and $8$ isolated points. The orbifold $X$ has a singular set of $2$ embedded tori and $8$ isolated points of 
Du Val type. The $K3$ surface $\tilde{Y}$ admits a non-symplectic involution,
which fixes $2$ tori, and in the complement of the fixed-point set, there are $16$ disjoint $(-2)$-curves.

(2) Take a holomorphic $\Z_3$-action on a hyperelliptic surface, which has $6$ isolated fixed points
and either no fixed curve or a single fixed torus (both cases are possible), where exactly $3$ of the isolated fixed points are Du Val. In this case, $\tilde{Y}$ comes with a non-symplectic automorphism of order $3$, which has $3$ isolated fixed points and either no fixed curve or a single fixed torus, such that in the complement of the fixed-point set, there are $9$ pairs of $(-2)$-curves, each intersecting transversely in one point. 

(3) Consider a holomorphic $\Z_4$-action on a hyperelliptic surface, which has $4$ isolated fixed points where $2$ of them are Du Val, and $4$ isolated points of isotropy of order $2$. The quotient orbifold $X$ has $6$ singular points, of which $4$ are Du Val. It is easy to see that $n=2$ in this example, so the $K3$ surface $\tilde{Y}$ comes with a non-symplectic involution. Note that the orbifold $Y$ has $4$ Du Val singularities of order $4$, and $6$ Du Val singularities of order $2$, where $2$ of the order $2$ singularities are fixed by the $\Z_2$ deck transformation. It follows easily that the non-symplectic involution on $\tilde{Y}$ has $2$ fixed $(-2)$-curves, and in the complement there are $4$ disjoint $(-2)$-curves and $4$ linear chains of $(-2)$-curves, each containing $3$ 
curves. (Totally, we see $18$ $(-2)$-curves in $\tilde{Y}$.) 

(4) Finally, we consider a holomorphic $\Z_8$-action on a complex torus. It has $2$ isolated fixed points, all of type $(1,5)$, $2$ isolated points of isotropy of order $4$ of type $(1,1)$, and $12$ isolated points of isotropy of order $2$. The quotient orbifold $X$ has $6$ singular points, 
of which $3$ are Du Val singularities. It is easy to see that $n=4$ in this example. Note that 
the orbifold $Y$ has $16$ Du Val singularities of order $2$, whose resolution gives $16$ disjoint 
$(-2)$-curves in the $K3$ surface $\tilde{Y}$. Let $\tau$ be the non-symplectic automorphism of order $4$. Then the action of $\tau$ on $\tilde{Y}$ is as follows: $\tau^2$ fixes $4$ of the $16$ disjoint
$(-2)$-curves in $\tilde{Y}$, and furthermore, $\tau$ switches $2$ of the $4$ curves of isotropy of order $2$, and leaves each of the remaining $2$ curves invariant. In particular, note that $\tau$ has $4$ fixed points, which are contained in the $2$ invariant $(-2)$-curves. 
\end{example}

\section{Constraints of the singular set} 

\subsection{Group actions on Calabi-Yau homology $K3$ surfaces}
Theorem 1.1 has interesting applications on symplectic finite group actions on symplectic 
$4$-manifolds with torsion canonical class. For an illustration, we shall consider the case of 
symplectic Calabi-Yau homology $K3$ surfaces; the result will be used later in the section.

A well-known property of holomorphic actions on a $K3$ surface is that the fixed-point set does not
contain points of mixed types, i.e., of both Du Val and non-Du Val types. The reason is that the canonical line bundle of a $K3$ surface is holomorphically trivial, meaning that there is a nowhere vanishing holomorphic section. If the induced action on the holomorphic section
is trivial, then all fixed points are Du Val, and if the induced action is nontrivial, none of the fixed
points are Du Val. 

In the following theorem, we generalize this phenomenon to the symplectic category.
For simplicity, we assume the group action is of prime order. 

\begin{theorem}
Let $M$ be a symplectic Calabi-Yau $4$-manifold with $b_1=0$, which is equipped with
a symplectic $G$-action of prime order $p$. Let $X=M/G$ be the quotient orbifold such that 
the resolution $\tilde{X}$ is rational. Then $M$ with the symplectic $G$-action is equivariantly symplectomorphic to the Calabi-Yau cover $Y$ equipped with the symplectic $\Z_p$-action 
of deck transformations {\em(}note that $n=p$ in this case{\em)}. As a consequence, the canonical line bundle $K_M$ admits a nowhere vanishing section $s$, such that the induced action of $G$ on $K_M$ is given by multiplication of $\exp(2\pi i/p)$ for some generator $g\in G$. In particular, the fixed-point set $M^G$ does not contain any fixed points of Du Val type. 
\end{theorem}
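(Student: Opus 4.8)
The plan is to reduce the entire statement to an analysis of the $G$-action on the canonical bundle $K_M$, after first checking that $X=M/G$ satisfies the standing hypotheses so that the construction of Theorem 1.1 applies. Since $\pi\colon M\to X$ is a finite orbifold covering and $K_M=\pi^\ast K_X$ is trivial, we get $c_1(K_X)=0$ in $H^2(|X|;\Q)$, so (i) holds, and (ii) is automatic. Because $G\cong\Z_p$ has prime order, every isotropy group is trivial or all of $G$; hence each $m_i=p$, and writing $H_j=G_j\cap SU(2)$ each isolated $q_j$ has $m_j\in\{1,p\}$, so $n:=\mathrm{lcm}\{m_i,m_j\}\in\{1,p\}$. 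If $n=1$ then $X$ has only Du Val singularities and the formula for $c_1(K_{\tilde X})$ in the introduction gives $c_1(K_{\tilde X})=0$, forcing $\tilde X$ to be symplectic Calabi--Yau, contrary to the hypothesis that $\tilde X$ is rational. Thus $n=p>1$, hypothesis (iii) holds, and the Calabi--Yau cover $Y$ with its $\Z_p$-action is defined.

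The heart of the argument is to show that the $G$-action on $K_M$ is given by a single global character --- the symplectic replacement for the fact that a holomorphic action on a $K3$ surface scales the holomorphic volume form by a fixed root of unity. Fixing a $G$-invariant $\omega$-compatible almost complex structure makes $K_M$ a $G$-equivariant complex line bundle with $c_1(K_M)=0$. Since $M$ is an integral homology $K3$ surface, $H^1(M;\Z)=0$, and the low-degree exact sequence of the Borel fibration $M\hookrightarrow M_G\to BG$ reads $0\to H^2(BG;\Z)\to H^2_G(M;\Z)\to H^2(M;\Z)^G$. As equivariant line bundles are classified by $H^2_G(M;\Z)$ and the image of $[K_M]$ in $H^2(M;\Z)$ is $c_1(K_M)=0$, the class $[K_M]$ comes from a character $\chi\in H^2(BG;\Z)=\mathrm{Hom}(\Z_p,U(1))$; equivalently $K_M$ is $G$-equivariantly isomorphic to the trivial bundle with $G$ acting through $\chi$. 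In particular the representation of $G$ on $(K_M)_q$ equals $\chi$ at every fixed point $q$, and there is a nowhere vanishing section $s$ with $g\cdot s=\chi(g)\,s$. Since $n=p>1$, the action has either a $2$-dimensional fixed surface or a non-Du Val isolated fixed point, at which the local determinant is nontrivial; because the fiber representation is the same $\chi$ at all fixed points, this forces $\chi\neq 1$. Consequently $\det$ of the local action is nontrivial at every isolated fixed point, so none is Du Val --- this is the ``in particular'' clause, and equivalently $H_j$ is trivial for every $j$ --- and as $p$ is prime we may choose the generator $g$ with $\chi(g)=\exp(2\pi i/p)$.

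It then remains to upgrade the coincidence of invariants into an honest equivariant symplectomorphism $M\cong Y$. I would use the eigensection $s$ to define $\Psi\colon M\to |K_X|$ by $\Psi(x)=s(x)\in (K_M)_x=(K_X)_{\pi(x)}$, a lift of $\pi$ that intertwines the $G$-action on $M$ with fiberwise multiplication by $\exp(2\pi i/p)$ on $|K_X|$. Since $g\cdot s^{\otimes p}=s^{\otimes p}$, the power $s^{\otimes p}$ is $G$-invariant and descends to a nowhere vanishing section of $K_X^{\,p}$; as $|X|$ is simply connected (because $\pi_1(|X|)=\pi_1(\tilde X)=0$ for $\tilde X$ rational, cf.\ Lemma 2.1), I can extract a $G$-invariant $p$-th root and rescale $s$ so that $s^{\otimes p}$ descends to exactly the trivializing section ``$1$'' used to build $Y=\xi^{-1}(1)$. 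Then $\Psi$ maps into $Y$, and comparing the two degree-$p$ covers of $X$ shows it is an equivariant diffeomorphism onto $Y$. Finally both symplectic forms are pulled back from $\omega_X$, and $\Psi$ covers the identity on $X$, so $\Psi^\ast\omega_Y=\pi^\ast\omega_X=\omega_M$; hence $\Psi$ is the desired equivariant symplectomorphism. The ``as a consequence'' clause then follows by transporting Theorem 1.1(2) through $\Psi$.

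The main obstacle is the middle step: in the non-integrable symplectic setting there is no holomorphic volume form to scale, so the constancy of the phase of the $G$-action on $K_M$ must be extracted purely topologically. The vanishing $H^1(M;\Z)=0$ together with the equivariant classification of line bundles is what makes this work, and the rationality of $\tilde X$ (forcing $n>1$) is exactly what excludes the trivial character; once the global character is in hand, the construction of $\Psi$ and the verification that it is a symplectomorphism are essentially formal.
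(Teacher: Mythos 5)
Your proposal is correct, but it follows a genuinely different route from the paper in both of its main steps. To rule out Du Val fixed points, the paper argues numerically: letting $x$ be their number, it applies the Lefschetz fixed point theorem to both quotients $M\to X$ and $\tilde{Y}\to \tilde{Y}/\Z_p$ to obtain $\chi(\tilde{Y})=\chi(M)+x(p^2-1)$, and since $M$ and $\tilde{Y}$ are symplectic Calabi--Yau $4$-manifolds with $\chi(M)=24$ and $\chi(\tilde{Y})\leq 24$, this forces $x=0$ (hence also $Y=\tilde{Y}$). You instead prove first the structural statement that the $G$-action on $K_M$ is given by a single global character $\chi$, using $H^1(M;\Z)=0$, the five-term sequence of the Borel fibration, and the classification of $G$-equivariant line bundles by $H^2_G(M;\Z)$ --- this last fact (due to Hattori and Yoshida, or provable for finite groups via the equivariant exponential sheaf sequence) is a genuine extra input that the paper never uses and should be cited explicitly --- and then the nontriviality of $\chi$, forced by $n=p>1$, excludes Du Val points. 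For the identification $M\cong Y$, the paper uses the abstract lifting criterion, powered by the fact that $\pi_1(M)$ has no proper finite-index subgroups (cf. \cite{Bauer,Li}), to lift $pr:M\to X$ through $\pi:Y\to X$; you instead build the lift explicitly as the normalized eigensection $s'$, using simple connectivity of $|X|$ to extract the $p$-th root matching $(s')^{\otimes p}$ with the trivializing section defining $Y=\xi^{-1}(1)$, and conclude because a fiber-preserving map between connected (orbifold) covers of the same finite degree is an isomorphism --- a step that quietly requires the connectedness of $Y$ (proved in Theorem 1.1) and the smoothness of $Y$ (which your character argument secures beforehand), so you should make that dependence explicit. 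The trade-off: your argument bypasses the residual-finiteness-type property of $\pi_1(M)$ entirely and produces the eigensection intrinsically on $M$, giving an honest symplectic substitute for the holomorphic-volume-form argument, whereas the paper's argument is softer (Euler characteristics plus covering space theory) but must invoke Theorem 1.1(2) through the identification afterwards to recover the character statement. Both proofs ultimately rest on Li's classification of symplectic Calabi--Yau $4$-manifolds with $b_1=0$: the paper through $\chi(M)=24$, yours through $H^1(M;\Z)=0$.
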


\begin{proof}
First of all, since $G$ is of prime order $p$, the singular set of $X=M/G$ consists of $2$-dimensional components $\{\Sigma_i\}$ and isolated points $\{q_j\}$, where $m_i=p$ for each $i$ and $G_j=G$ 
for each $j$. If for some $q_j$, $m_j=1$, then $H_j=G$, and if $m_j>1$, then $H_j$ is trivial and $m_j=p$. Since $\tilde{X}$ is rational, it follows easily that $n=p$, where $n:=\text{lcm}\{m_i,m_j\}$. 

We claim that the set $\{q_j|m_j=1\}$ is empty, and $Y=\tilde{Y}$, which is a symplectic Calabi-Yau $4$-manifold with $b_1=0$. To see this, we first note that the singularities of $Y$ are given by the 
pre-image $\pi^{-1}(q_j)$ where $q_j$ is a singular point of $X$ with $m_j=1$, so the canonical symplectic $\Z_p$-action on $Y$ acts freely on the singular set of $Y$. With this understood, let $x$ be the number of singular points $q_j$ such that $m_j=1$. Then 
$$
\chi(\tilde{Y}/\Z_p)=\chi(M/G)+x(p-1) \mbox{ and } \chi(\tilde{Y}^{\Z_p})=\chi(M^G)-x.
$$
On the other hand, the Lefschetz fixed point theorem implies that
$$
p\cdot \chi(M/G)=\chi(M)+(p-1)\cdot \chi(M^G), \;\;\;\;  p\cdot \chi(\tilde{Y}/\Z_p)=\chi(\tilde{Y})+(p-1)\cdot \chi(\tilde{Y}^{\Z_p}). 
$$
It follows easily that $\chi(\tilde{Y})=\chi(M)+x(p^2-1)$. With $\chi(M)=24$, it follows immediately that 
$\chi(\tilde{Y})=24$ and $x=0$. Hence our claim.

It remains to show that $M$ is $G$-equivariantly symplectomorphic to $Y$ with the natural $\Z_p=G$ action. This part relies on a well-known property of $M$ that $\pi_1(M)$ has no subgroups of finite index (cf. \cite{Bauer, Li}). To finish the proof, we let $pr: M\rightarrow X=M/G$ be the quotient map. We claim that $pr$ can be lifted to a map $\psi: M\rightarrow Y$ under the orbifold covering $\pi: Y\rightarrow X$
from Theorem 1.1. To this end, we need to examine the image of 
$pr_\ast: \pi_1(M)\rightarrow \pi_1^{orb}(X)$, and show that 
$pr_\ast(\pi_1(M))\subset \pi_\ast (\pi_1(Y))$. For this we observe that there is a surjective homomorphism $\rho: \pi_1^{orb}(X)\rightarrow \Z_p$ associated to the orbifold covering 
$\pi: Y\rightarrow X$ such that $\pi_\ast (\pi_1(Y))$ is identified with the kernel of $\rho$. 
With this understood, suppose to the contrary that $pr_\ast(\pi_1(M))$ is not contained in 
$\pi_\ast (\pi_1(Y))$. Then the homomorphism $\rho\circ pr_\ast: \pi_1(M)\rightarrow \Z_p$ must be surjective as $p$ is prime. The kernel of $\rho\circ pr_\ast$ is a subgroup of $\pi_1(M)$
of a finite index, which is a contradiction. Hence our claim that $pr$ can be lifted to a map 
$\psi: M\rightarrow Y$ under the orbifold covering $\pi: Y\rightarrow X$. The map $\psi: M\rightarrow Y$ is clearly an equivariant diffeomorphism, inducing the identity map on the orbifold $X$.
Since the symplectic structure on $Y$ is the pull-back of the symplectic structure on $X$ via the orbifold covering $\pi: Y\rightarrow X$, it follows that $\psi$ is a symplectomorphism. This completes the proof of Theorem 3.1.

\end{proof}

Now we state a theorem which gives some general constraints on the singular set of $X$
(here $X$ is not necessarily a global quotient $M/G$). 

\begin{theorem}
Suppose the symplectic Calabi-Yau $4$-manifold $\tilde{Y}$ is an integral homology $K3$ surface. 
Then the number $n:=\text{lcm} \{m_i,m_j\}$ and the $2$-dimensional components $\{\Sigma_i\}$ of the singular set of $X$ obey the following constraints.
\begin{itemize}
\item [{(1)}] If $p$ is a prime factor of $n$, then $p\leq 19$.
\item [{(2)}] There can be at most one component in $\{\Sigma_i\}$ which has genus greater than
$1$. If there is such a component in $\{\Sigma_i\}$, then the remaining components must be all spheres. Moreover, $n$ must equal the order of isotropy along the component of genus 
$>1$, and if $p$ is a prime factor of $n$, then $p\leq 5$.
\item [{(3)}] There can be at most two components in $\{\Sigma_i\}$ which are torus, and if this 
happens, there are no other components in $\{\Sigma_i\}$, and $n=2$ must be true. 
If there is only one torus in $\{\Sigma_i\}$, then $n$ must equal the order of isotropy along the torus, and moreover, if $p$ is a prime factor of $n$, then $p\leq 11$.
\end{itemize}
\end{theorem}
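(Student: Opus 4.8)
The plan is to translate all the hypotheses into statements about the fixed-point data of the symplectic $\Z_n$-action on $\tilde{Y}$, which is now an integral homology $K3$ surface, so that $b_2(\tilde{Y})=22$, $b_2^{+}(\tilde{Y})=3$, $b_2^{-}(\tilde{Y})=19$, and the signature is $-16$. First I would set up the dictionary supplied by Theorem 1.1. For each prime $p\mid n$ the order-$p$ subgroup $H_p\subset\Z_n$ acts symplectically on $\tilde{Y}$, and by Theorem 1.1(2) its generator acts on the trivializing section of $K_{\tilde{Y}}$ by $\exp(2\pi i/p)$; thus $H_p$ acts \emph{non-symplectically}, i.e.\ on the canonical data by a primitive $p$-th root of unity. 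By Theorem 1.1(3) a $2$-dimensional singular component $\Sigma_i$ of isotropy order $m_i$ lifts to $n/m_i$ disjoint embedded symplectic surfaces in $\tilde{Y}$, each diffeomorphic to $\Sigma_i$ and each fixed pointwise by the cyclic subgroup of order $m_i$, while the $q_j$ produce isolated fixed points. Since $c_1(K_{\tilde{Y}})=0$, adjunction gives $[F]^2=2g(F)-2$ for every fixed surface $F$, so spheres have square $-2$, tori square $0$, and genus $>1$ surfaces positive square.

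The heart of parts (2) and (3) is a signature count. Because the action on the canonical data is by a primitive root, there is a positive-definite $2$-plane $P\subset H^2_{+}(\tilde{Y})$ --- the symplectic analogue of $H^{2,0}\oplus H^{0,2}$, furnished by the structure theory of symplectic Calabi--Yau $4$-manifolds via Seiberg--Witten--Taubes theory and already exploited in \cite{C} --- on which a generator of $\Z_n$ acts by rotation through $2\pi/n$. The class of a fixed surface $F$ is invariant under the element $g$ fixing $F$ pointwise, whereas $g$ acts nontrivially on $P$; hence $[F]\perp P$, so every fixed-surface class lies in $P^{\perp}$, which is Lorentzian of signature $(1,19)$. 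Disjoint fixed surfaces have orthogonal classes, so at most one can have positive square: this is the statement that at most one component has genus $>1$. If such a $C$ exists, its class $v$ spans the unique positive direction of $P^{\perp}$, and $v^{\perp}\cap P^{\perp}$ is negative definite, forcing every other fixed-surface class to have negative square, hence genus $0$ --- so the remaining components are spheres and no torus is allowed. Finally the $n/m_i$ lifts of $C$ are pairwise orthogonal positive classes, all orthogonal to $P$, so $\dim\bigl(P\oplus\langle\text{lifts}\rangle\bigr)\le b_2^{+}=3$ forces $n/m_i=1$, i.e.\ $n$ equals the isotropy order along $C$. In the torus case the same orthogonality shows all fixed-torus classes are proportional to a single primitive isotropic $w\in P^{\perp}$; bounding the number of torus \emph{components} and proving $n=2$ when there are two (the Nikulin two-elliptic-curve configuration) then requires the finer per-prime fixed-point classification of \cite{C}.

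For the numerical bounds I would argue as follows. The bound $p\le 19$ in (1) is immediate from $H^2(\tilde{Y};\R)=H^2_{\mathrm{inv}}\oplus H^2_{\mathrm{mov}}$ under $H_p$: over $\Q$ the moving part is a sum of $r\ge 1$ copies of the rank $p-1$ cyclotomic representation, so $\dim H^2_{\mathrm{mov}}=r(p-1)$, while $H^2_{\mathrm{inv}}$ contains the invariant class $[\omega]$ and has rank $\ge 1$; hence $r(p-1)\le 21$, which for primes forces $p\le 19$ (the prime $23$ would need rank $22$). The sharper thresholds $p\le 5$ for a fixed curve of genus $\ge 2$ and $p\le 11$ for a fixed torus come from pairing the Lefschetz identity $\chi(\mathrm{Fix})=24-rp$ with the Atiyah--Singer $G$-signature theorem (valid for any smooth action) and the equivariant index of the $\mathrm{Spin}^{c}$ Dirac operator on $\tilde{Y}$ (the symplectic substitute for the Atiyah--Bott holomorphic Lefschetz formula, as developed in \cite{C}). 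A fixed curve $F$ of genus $g$ and normal rotation number $\zeta_p^{-1}$ contributes a term proportional to $(2g-2)\csc^2(\pi/p)$ to the $G$-signature; since the total signature defect is controlled by $b_2(\tilde{Y})=22$, this bounds $g$ above by a decreasing function of $p$, yielding the stated thresholds.

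The main obstacle is not the lattice-theoretic skeleton above, which is clean, but the two points where the merely symplectic (non-integrable) nature of $\tilde{Y}$ must be handled: (i) producing the rotation-invariant positive $2$-plane $P$ together with the rationality and orthogonality properties of fixed-surface classes without a complex structure, and (ii) converting the equivariant index identities into the exact genus-versus-$p$ thresholds and into the count of fixed tori. Both are furnished by the Seiberg--Witten--Taubes package and the fixed-point analysis of \cite{C}; I would organize the write-up so that the present theorem becomes a bookkeeping consequence of those inputs combined with the elementary signature argument of the second paragraph.
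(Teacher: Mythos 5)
Your lattice-theoretic skeleton for parts (2) and (3) rests entirely on the existence of a $\Z_n$-invariant positive-definite $2$-plane $P\subset H^2_{+}(\tilde{Y};\R)$ on which a generator acts by rotation through $2\pi/n$. No such object is available here: $\tilde{Y}$ is only a \emph{symplectic} integral homology $K3$, it carries no Hodge structure, and the $\Z_n$-action on a trivializing section of $K_{\tilde{Y}}$ (a section of a complex line bundle, not a closed $2$-form with a de Rham class) produces no distinguished plane in $H^2_{+}$. Neither Seiberg--Witten--Taubes theory nor \cite{C} furnishes a ``symplectic $H^{2,0}\oplus H^{0,2}$''; this is exactly the Hodge-theoretic input that is lost outside the K\"ahler category, and it is why the paper never argues inside the signature-$(3,19)$ lattice of $\tilde{Y}$ at all. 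Instead, for part (2) it works in the rational resolution $\tilde{X}$, where $b_2^{+}=1$: adjunction gives $B_i^2=2m_i(g_i-1)$ for the descendant $B_i$ of $\Sigma_i$, so among the pairwise disjoint components at most one class can have positive square, with no invariant plane needed; and the claim $n=m_i$ is obtained by running the same $b_2^{+}=1$ argument on the resolution of $\tilde{Y}/\Z_{m_i}$, not by counting positive directions in $\tilde{Y}$. Your part (1) has the same hidden dependence: to get $r\geq 1$ (homological nontriviality of the $\Z_p$-action), which you assert without proof, one needs either $P$ or the theorem of \cite{CK1} that a symplectic action on a $4$-manifold with $c_1=0$ cannot be homologically trivial; the paper invokes the latter.

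The numerical thresholds and the two-torus statement also cannot be reached by the route you sketch. A fixed torus has square zero (adjunction with $c_1(K_{\tilde{Y}})=0$), hence contributes $0\cdot\csc^2(\theta/2)=0$ to the Atiyah--Singer $G$-signature formula, so that theorem gives no constraint whatsoever in case (3); and for a fixed curve of genus $g\geq 2$ the $G$-signature inequality only yields a bound of order $p\lesssim b_2$ (the fixed-surface term is $\sim(2g-2)(p^2-1)/3$ against a defect of size $\sim 22(p-1)$, further contaminated by isolated-point cotangent terms of indefinite sign), nowhere near $p\leq 5$. What the paper actually uses is Edmonds' theorem \cite{Ed1}: $b_2(\tilde{Y})=rp+t+s(p-1)$ where, crucially, $s=b_1(\tilde{Y}^{\Z_p})$; then $s\geq 2g_i\geq 4$ gives $p\leq 5$ and $s\geq 2$ gives $p\leq 11$ at once. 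This identity concerns the integral $\Z_p$-module structure of $H^2$ and is not recoverable from index theory. Finally, the core of part (3) --- at most two fixed tori, and $n=2$ with no other singular components when two occur --- is not ``the finer per-prime classification of \cite{C}'' as you assert: it is Lemma 3.3 of the present paper, proved there by a separate, substantial argument comparing $a$-coefficients of the fixed-surface classes in a reduced basis of the rational resolution $\tilde{X}$ (ruling out $p=3,5$ and pinning down the $p=2$ configuration). Deferring that step to \cite{C} punts on precisely the new content of the theorem.
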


\begin{proof}
We will prove the theorem by examining the prime order subgroup actions of the symplectic $\Z_n$-action on $\tilde{Y}$. To this end, we let $M$
be a symplectic Calabi-Yau $4$-manifold with $b_1=0$, equipped with a symplectic $G$-action
of prime order $p$. Note that $M$ has the integral homology of $K3$ surface. 

The induced action of $G$ on $H^2(M)$, as an integral $\Z_p$-representation, splits into a direct 
sum of $3$ types of $\Z_p$-representations, i.e, the regular type of rank $p$, the trivial representation of rank $1$, and the representation of cyclotomic type of rank $p-1$. If we let $r,t,s$ be the number
of summands of the above $3$ types of $\Z_p$-representations in $H^2(M)$, then we have the
following identities (cf. \cite{Ed1}):
$$
b_2(M)=rp+t+s(p-1), \chi(M^G)=t-s+2, \mbox{ and } s=b_1(M^G). 
$$
Note that the second identity is the Lefschetz fixed point theorem. As for the third one, i.e., $s=b_1(M^G)$, it was proved in \cite{Ed1} under the assumption
that $M$ is simply connected. However, since its argument is purely cohomological, the identity continues to hold under the weaker condition 
$H_1(M)=0$ (cf. \cite{Ed2}).  As an immediate corollary, note that
if $p$ is a prime factor of $n$, then there is an induced $\Z_p$-action on $\tilde{Y}$. If $p>19$,
the $\Z_p$-representation on $H^2(\tilde{Y})$ can not have any summands of regular type or 
cyclotomic type, i.e., $r=s=0$, because $b_2(\tilde{Y})=22$. In other words, the symplectic $\Z_p$-action on $\tilde{Y}$ is homologically trivial. However, since $c_1(K_{\tilde{Y}})=0$, this is not
possible (cf. \cite{CK1}). Hence part (1) of Theorem 3.2 follows. 

Next we prove part (2) of Theorem 3.2. Let $\Sigma_i$ be a singular component of $X$ whose
genus is denoted by $g_i$
and let $B_i$ be the descendant in the resolution $\tilde{X}$. Applying the adjunction
formula to $B_i$ (note that $c_1(K_{\tilde{X}})=-\sum_i \frac{m_i-1}{m_i} B_i +\sum_j \sum_{k\in I_j} a_{j,k} F_{j,k}$), it follows easily that $B_i^2=2m_i(g_i-1)$. As a consequence, $g_i>1$ if and
only of $B_i^2>0$. Since 
$b_2^{+}(\tilde{X})=1$, it follows immediately that one can have at most one $\Sigma_i$ with 
$g_i>1$. Moreover, suppose there is another component $\Sigma_k$ which is a torus, then its descendent $B_k$ has $B_k^2=0$. It is easy to 
see that $\Sigma_i,\Sigma_k$ can not both exist, because $(B_i+B_k)^2>0$ and $B_i$ and 
$B_i+B_k$ are linearly independent. Hence if there is a singular component $\Sigma_i$ of genus 
$g_i>1$, then all other singular components must be spheres. To see that $n=m_i$ in this case, 
we observe that the pre-image $\pi^{-1}(\Sigma_i)$ in $Y$ has $n/m_i$ many components, each 
is fixed by a subgroup of $\Z_n$ of order $m_i$. The above argument on $X$, if applied to the
orbifold $\tilde{Y}/\Z_{m_i}$, implies immediately that $n/m_i=1$ must be true. Finally, if $p$ is
a prime factor of $n$, then there is a $\Z_p$-action on $\tilde{Y}$ fixing $\pi^{-1}(\Sigma_i)$.
Now observe that in the identity $b_2(\tilde{Y})=rp+t+s(p-1)$, 
$s\geq b_1(\pi^{-1}(\Sigma_i))=2g_i\geq 4$, which implies that $p\leq 22/4+1<7$. 
Hence part (2) of Theorem 3.2 is proved. 

Finally,  we consider part (3) of Theorem 3.2. 

\begin{lemma}
Let $G$ be a finite cyclic group of order $m$, and let $M$ be a symplectic Calabi-Yau $4$-manifold with $b_1=0$. Suppose a symplectic $G$-action on $M$ has at least two fixed components of torus.
Then $m=2$ and $M^G$ consists of the two tori. 
\end{lemma}

\begin{proof}
Let $X=M/G$ be the quotient orbifold. We first consider the special case where $m=p$ is prime.
To this end, we first note that in $b_2(M)=rp+t+s(p-1)$, $s\geq 4$, so that $p\leq 5$. 
To further analyze the $\Z_p$-action for these cases, we shall consider the resolution 
$\tilde{X}$ of $X$, which is a symplectic rational $4$-manifold. 

Let $\{\Sigma_i\}$ be the $2$-dimensional fixed components and $\{q_j\}$ the isolated fixed points 
of the $\Z_p$-action. Let $B_i$ be the descendent of $\Sigma_i$ in $\tilde{X}$, and let 
$D_j\subset \tilde{X}$ be the exceptional set of the minimal resolution of $q_j$. 
With this understood, we shall prove the lemma by looking at the expressions of the
$B_i$'s and the components in the $D_j$'s with respect to a certain basis of $H^2(\tilde{X})$,
which is called a reduced basis. (See Section 4 for more discussions about reduced bases.)

Let $H,E_1,\cdots,E_N$ be a reduced basis. Then 
$c_1(K_{\tilde{X}})=-3H+E_1+\cdots+E_N$ (cf. Section 4). On the other hand, note that 
$$
c_1(K_{\tilde{X}})=-\frac{p-1}{p}\sum_i B_i +\sum_j c_1(D_j).
$$
Denoting by $B_1,B_2$ the two torus components in $\{B_i\}$, 
we write each of $B_1,B_2$ in the reduced basis, with an expression of the form
$aH-\sum_{k=1}^N b_k E_k$, and we shall call the coefficient $a$ in such expressions the
$a$-coefficient of $B_1, B_2$. With this understood, by Lemma 4.2 in \cite{C}, the 
$a$-coefficients of both $B_1,B_2$ are at least $3$. Moreover, if the 
$a$-coefficient equals $3$, then $B_1$ or $B_2$ must take the form 
$B=3H-E_{j_1}-E_{j_2}-\cdots-E_{j_9}$. On the other hand, if there is a symplectic sphere $S$ in 
$\{B_i\}$ or $\{D_j\}$ whose $a$-coefficient is negative, then $S$ must have the homological expression 
$$
S=aH+(|a|+1)E_1-E_{i_1}-\cdots-E_{i_l}
$$ 
for some $a<0$ (cf. \cite{C}, Lemma 3.4).
With this understood, note that 
$$
B\cdot S\leq (|a|+1)+3a=2a+1<0,
$$
which contradicts the fact that $B_1,B_2$ are disjoint from $S$. Hence if there is a symplectic sphere $S$ in $\{B_i\}$ or $\{D_j\}$ whose $a$-coefficient is negative (such a component must be unique, see \cite{C}, Lemma 4.2), the $a$-coefficients of both $B_1,B_2$ must be at least $4$. 

To derive a contradiction for the case where $p=3$ or $5$, we first observe that the contribution of $B_1,B_2$ to the $a$-coefficient of $-p\cdot c_1(K_{\tilde{X}})$ is at least $6(p-1)$, which is greater than $3p$ for $p=3$ or $5$. Hence there must be a sphere $S$ in $\{B_i\}$ or $\{D_j\}$ whose $a$-coefficient is negative. With this understood, the contribution of $B_1,B_2$ to the $a$-coefficient of 
$-p\cdot c_1(K_{\tilde{X}})$ is then at least $8(p-1)$. We will get a contradiction again if the contribution of $S$ to the $a$-coefficient of $-p\cdot c_1(K_{\tilde{X}})$ is greater than $8-5p$. 

Consider first the case of $p=3$. In this case, if $S$ is a component of $\{B_i\}$, then $S$ is a 
$(-6)$-sphere. The $a$-coefficient of $S$ is no less than $-2$ (cf. \cite{C}, Lemma 3.4), and the contribution to $-p\cdot c_1(K_{\tilde{X}})$ is at least $-2(p-1)=-4>8-5p$. If $S$ is a component from 
$\{D_j\}$, then $S$ is a $(-3)$-sphere, and its contribution to the $a$-coefficient of $-p\cdot c_1(K_{\tilde{X}})$ equals $3\times \frac{1}{3}\times (-1)=-1$. In either case, we arrive at
a contradiction. Hence $p=3$ is ruled out. For $p=5$, the argument is similar. If $S$ is a component of $\{B_i\}$, then $S$ is a $(-10)$-sphere. In this case, the contribution of $S$ to the $a$-coefficient of 
$-p\cdot c_1(K_{\tilde{X}})$ is at least $-4(p-1)=-16>8-5p$. If $S$ is a component from $\{D_j\}$, 
there are several possibilities. Note that $D_j$ either consists of a single $(-5)$-sphere, or a pair of 
$(-3)$-sphere and $(-2)$-sphere intersecting transversely at one point. With this understood, note that
$S$ cannot be a $(-2)$-sphere as it has negative $a$-coefficient (cf. \cite{C}, Lemma 3.4). If $S$ is
a $(-5)$-sphere, the contribution of $S$ to the $a$-coefficient of $-p\cdot c_1(K_{\tilde{X}})$ is at least $-6$, and if $S$ is a $(-3)$-sphere, the contribution equals $-2$. In either case, we arrive at
a contradiction. Hence $p=5$ is also ruled out. 

It remains to consider the case of $p=2$. Note that by Theorem 3.1, there are no isolated fixed points, so $\{D_j\}=\emptyset$. We first assume $S$ exists. Then the contribution of $B_1,B_2$ to the 
$a$-coefficient of $-p\cdot c_1(K_{\tilde{X}})$ is at least $8(p-1)=8$. On the other hand, $S$ as a 
component in $\{B_i\}$ must be a $(-4)$-sphere. Its contribution to the $a$-coefficient of 
$-p\cdot c_1(K_{\tilde{X}})$ equals $-1$. This is a contradiction as the $a$-coefficient of 
$-p\cdot c_1(K_{\tilde{X}})$ equals $6$ for $p=2$. Hence $S$ cannot exist, and both $B_1,B_2$ have $a$-coefficient equal to $3$. Then it follows easily that $B_1=B_2=3H-E_{j_1}-E_{j_2}-\cdots-E_{j_9}$ for some classes $E_{j_s}$, $s=1,2,\cdots,9$. On the other hand, $c_1(K_{\tilde{X}})=-3H+E_1+\cdots+E_N$. By comparing with the equation $c_1(K_{\tilde{X}})=-\frac{1}{2}\sum_i B_i$, it follows easily that there are no other components in $\{B_i\}$ besides $B_1,B_2$ (and we must have $N=9$). This proves the lemma for the special case of prime order actions.  

For the general case, it follows easily that $m=2^k$ for some $k>0$. With this understood, observe that if a point $q\in M$ is fixed by some nontrivial element of $G$, then it must be fixed by the subgroup of $G$ of order $2$. It follows easily that the singular set of $X=M/G$ consists of only the two tori. If we continue to denote by $B_1,B_2$ the descendants of the fixed tori in the resolution 
$\tilde{X}$ of $X=M/G$, then we have
$$
c_1(K_{\tilde{X}})=-\frac{m-1}{m} (B_1+B_2).
$$
Again, the $a$-coefficients of $B_1,B_2$ are at least $3$ (cf. \cite{C}, Lemma 4.2), from which the above equation implies that $3\geq \frac{m-1}{m}(3+3)$ by comparing the $a$-coefficients of both sides. It follows immediately that $m=2$, and the proof of the lemma is complete.

\end{proof}

Back to the proof of Theorem 3.2, suppose $\Sigma_1,\Sigma_2$ are two singular components of $X$ which are torus, with $m_1,m_2$ being the order of the isotropy groups respectively. If 
$m_1\neq m_2$, then one of $n/m_1,n/m_2$ must be greater than $1$. Without loss of generality,
assume $n/m_1>1$. Then there are at least two components in $\pi^{-1}(\Sigma_1)\subset \tilde{Y}$, which are fixed by a subgroup of $\Z_n$ of order $m_1$. By Lemma 3.3, we must have $m_1=2$ and $n/m_1=2$. It follows that we must have $n=m_2=4$ by the assumption that $m_1\neq m_2$. 
But this implies that the $\Z_n$-action fixes $\pi^{-1}(\Sigma_2)\subset \tilde{Y}$, so that the subgroup of order $m_1=2$, which already fixes two tori in $\pi^{-1}(\Sigma_1)$, also fixes $\pi^{-1}(\Sigma_2)$. This is clearly a contradiction to Lemma 3.3. Hence $m_1=m_2$. Then the above argument shows that we must have $n/m_1=n/m_2=1$, and $n=2$ by Lemma 3.3. Moreover, there are no other components in $\{\Sigma_i\}$ besides $\Sigma_1,\Sigma_2$.

Finally, suppose there is only one component $\Sigma_1$ which is a torus, with $m_1$ being the order of the isotropy group along $\Sigma_1$. Then if $n>m_1$, there will be at least two components in $\pi^{-1}(\Sigma_1)$, which is fixed by a $\Z_{m_1}$-action on $\tilde{Y}$. By Lemma 3.3, $m_1=2$ and $n/m_1=2$, so that $n=4$. If there is a component in $\{\Sigma_i\}$ with $m_i=n=4$, then this component is also fixed by the $\Z_{m_1}$-action, which contradicts Lemma 3.3. 
Hence there must be a singular point $q_j$ such that $m_j=n=4$. Suppose first that the subgroup $H_j$ at $q_j$ is trivial. Then $\pi^{-1}(q_j)$, consists of one point, is a smooth point in $Y$, and is being fixed by the $\Z_n$-action on $\tilde{Y}$. In particular, it is a fixed point of the subgroup of order $m_1=2$. But this contradicts Lemma 3.3. Suppose $H_j$ is nontrivial. Then $\pi^{-1}(q_j)$ is a singular point of $Y$. Let $D_j$ be the exceptional set of its minimal resolution in $\tilde{Y}$. Then $D_j$ is invariant under the $\Z_n$-action on $\tilde{Y}$. It is easy to see that the action of the subgroup of order $m_1=2$ has a fixed point contained in $D_j$, which is a contradiction to 
Lemma 3.3. This proves that $n$ must be equal to the order of the isotropy group along the unique torus component $\Sigma_1$. Finally, suppose $p$ is a prime factor of $n$. Then the action of the subgroup of $\Z_n$ of order $p$ on $\tilde{Y}$ fixes the torus $\pi^{-1}(\Sigma_1)$. Now appealing to
the identity $b_2(\tilde{Y})=rp+t+s(p-1)$, we find that $s(p-1)\leq 22$, where $s\geq b_1(\pi^{-1}(\Sigma_1))=2$. It follows easily that $p\leq 11$. This completes the proof of Theorem 3.2.

\end{proof}

\subsection{Constraints from Seiberg-Witten-Taubes theory}
In this subsection, we derive some constraints on the singular set of the orbifold $X$ using the Seiberg-Witten-Taubes theory, by extending an argument of T.-J. Li in \cite{Li} to the orbifold setting. 
It is important to note that $X$ is assumed to have $b_1=0$.

The constraints are given in terms of certain numerical contributions of the singular set to the dimension of the moduli space of Seiberg-Witten equations. To describe them, we consider 
any orbifold complex line bundle $L$ over $X$ such that $c_1(L)=0\in H^2(|X|;\Q)$. 
For each singular point $q_j$ of $X$, we denote by $\rho^L_j: G_j\rightarrow \C^\ast$ the 
complex representation of the isotropy group $G_j$ on the fiber of $L$ at $q_j$, and denote by
$\rho_{j,k}(g)$, for $k=1,2$, the eigenvalues of $g\in G_j$ associated to the complex representation of 
$G_j$ on the tangent space $T_{q_j} X$. For each $2$-dimensional singular component $\Sigma_i$,
we denote by $G_i:=\Z_{m_i}$ the isotropy group along $\Sigma_i$, and let $\rho^L_i: G_i
\rightarrow \C^\ast$ be the complex representation of $G_i$ on the fibers of $L$ along $\Sigma_i$,
and let $\rho_i: G_i\rightarrow \C^\ast$ be the complex representation of $G_i$ on the normal bundle
$\nu_{\Sigma_i}$ of $\Sigma_i$. With this understood, we set
$$
I_i(L) := \frac{1}{m_i} \sum_{g\in G_i\setminus \{e\}} \frac{(1+\rho_i(g^{-1}))(\rho^L_i(g)-1)}
{(1-\rho_i(g^{-1}))^2},
$$
and 
$$
I_j(L)
:=\frac{1}{|G_j|}\sum_{g\in G_j\setminus\{e\}} \frac{2(\rho^L_j(g)-1)}{(1-\rho_{j,1}(g^{-1}))
(1-\rho_{j,2}(g^{-1}))}.
$$
It is easy to check that $I_i(L)=I_i(K_X\otimes L^{-1})$, $I_j(L)=I_j(K_X\otimes L^{-1})$ for any $i,j$. 
Finally, we set 
$$
d(L):=\sum_i I_i(L)\chi(\Sigma_i)+\sum_j I_j(L).
$$
One can easily check that, with $c_1(L)=0$, and with 
$$
c_1(\nu_{\Sigma_i}) [\Sigma_i]=\Sigma_i^2=2g_i-2=-c_1(T\Sigma_i) [\Sigma_i]=-\chi(\Sigma_i)
$$ 
by the adjunction formula (here $g_i$ is the genus of $\Sigma_i$), 
$d(L)$ equals the dimension of the moduli space of Seiberg-Witten equations associated 
to the orbifold complex line bundle $L$ (cf. \cite{C0}, Appendix A). With this understood, we note  
that $d(L)=d(K_X\otimes L^{-1})$, and $d(L)=0$ if $L$ is the trivial complex line bundle or
$L=K_X$. 

\begin{theorem}
Suppose $b_1(X)=0$. Then for any orbifold complex line bundle $L$ such that 
$c_1(L)=0\in H^2(|X|;\Q)$, one has 
$$
d(L)\leq 0,
$$
with ``$=$" if and only if $L$  is the trivial complex line bundle or $L=K_X$. 
\end{theorem}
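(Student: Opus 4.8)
The plan is to extend T.-J. Li's Seiberg--Witten--Taubes argument in \cite{Li} to the orbifold $X$, exploiting the fact that $b_1(X)=0$ forces the resolution $\tilde{X}$ to be rational, so that $X$ has $b_2^+(X)=1$. As recorded above, $d(L)$ is the dimension of the moduli space of the orbifold Seiberg--Witten equations attached to the $\mathrm{Spin}^c$ structure $\mathfrak{s}_L$ obtained by twisting the canonical $\mathrm{Spin}^c$ structure by $L$ (cf. \cite{C0}, Appendix A), and $d(L)=0$ already holds for the two distinguished bundles $L=\mathcal{O}$ (trivial) and $L=K_X$. Since $d(L)=d(K_X\otimes L^{-1})$, it suffices to prove the single implication that $d(L)\ge 0$ forces $L\in\{\mathcal{O},K_X\}$; the inequality $d(L)\le 0$ and the characterization of the equality case then follow at once, because no $L$ can have $d(L)>0$ and the two exceptional bundles contribute exactly $d=0$.

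The first main step is to promote the numerical condition $d(L)\ge 0$ to the statement that $\mathfrak{s}_L$ is a basic class. Here I would invoke the orbifold wall-crossing formula in the chamber structure determined by $b_2^+(X)=1$: because $b_1(X)=0$, whenever $d(L)\ge 0$ the two chamber invariants $SW^{+}(\mathfrak{s}_L)$ and $SW^{-}(\mathfrak{s}_L)$ differ by $\pm 1$, so $SW(\mathfrak{s}_L)$ is nonzero in at least one chamber. A small technical point to settle beforehand is the parity of $d(L)$, which I expect to be even as in the smooth Kähler/symplectic case, so that the invariant and the wall-crossing number are well defined.

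The second step identifies the basic classes by Taubes' theory. If $SW(\mathfrak{s}_L)$ is nonzero in the symplectic (Taubes) chamber, then the orbifold version of the ``$SW\Rightarrow Gr$'' correspondence produces a (possibly empty) $J$-holomorphic orbifold curve $C$ with $\mathcal{O}(C)\cong L$ and $[C]=c_1(L)$. Since $c_1(L)=0\in H^2(|X|;\Q)$, the symplectic area $\langle[\omega],[C]\rangle$ vanishes, and positivity of area for nonempty pseudoholomorphic curves forces $C=\emptyset$; the resulting nowhere vanishing section of $L$ then forces $L$ to be honestly the trivial orbifold line bundle, since an equivariant section of a nontrivial isotropy representation must vanish at the corresponding fixed point. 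If instead $SW(\mathfrak{s}_L)$ is nonzero only in the conjugate chamber, I would apply charge conjugation, which identifies the relevant invariant with that of $\mathfrak{s}_{K_X\otimes L^{-1}}$ in the symplectic chamber (note $-c_1(\mathfrak{s}_L)=c_1(\mathfrak{s}_{K_X\otimes L^{-1}})$); the same curve-area argument then yields $K_X\otimes L^{-1}=\mathcal{O}$, i.e. $L=K_X$. Combining the two cases gives $L\in\{\mathcal{O},K_X\}$, as required.

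The hardest part will be the analytic foundations rather than the formal argument: one needs the orbifold counterparts of Taubes' nonvanishing theorem and of the $SW\Rightarrow Gr$ correspondence, together with the $b_2^+=1$ wall-crossing formula, all in the presence of the isolated and the $2$-dimensional isotropy loci; and one must verify that positivity of symplectic area and the vanishing-section argument survive the orbifold structure, so that $C=\emptyset$ genuinely forces $L$ to be honestly trivial rather than merely rationally trivial. Granting this machinery, the remaining bookkeeping---the parity of $d(L)$, the charge-conjugation symmetry, and the reduction via $d(L)=d(K_X\otimes L^{-1})$---is routine.
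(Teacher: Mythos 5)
Your proposal is correct and follows essentially the same route as the paper's own proof: with $b_1(X)=0$ and $b_2^{+}(X)=1$, the wall-crossing number $\pm 1$ shows that whenever $d(L)\geq 0$ at least one of the Taubes-chamber invariants attached to $L$ and to $K_X\otimes L^{-1}$ is nonzero, and Taubes' theorem (nonvanishing invariant together with $c_1=0$ forces the orbifold line bundle to be trivial) then yields $L$ trivial or $L=K_X$, so that $d(L)<0$ for all other $L$. The only cosmetic difference is that you compare the two chambers of a single Spin$^c$ structure and then invoke charge conjugation, whereas the paper directly compares $SW_X(L)$ and $SW_X(K_X\otimes L^{-1})$ both in the Taubes chamber, using in passing that a nontrivial torsion bundle (such as one of these two, since $K_X$ has order $n>1$) has vanishing invariant.
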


\begin{proof}
We begin by noting that $b_2^{+}(X)=b_2^{+}(\tilde{X})=1$. With $b_1(X)=0$, the wall-crossing number for the Seiberg-Witten invariant of
$X$ equals $\pm 1$. With this understood, we denote by $SW_X(L)$ the Seiberg-Witten invariant 
of $X$ associated to an orbifold complex line bundle $L$ defined in the Taubes chamber. Then if
the dimension $d(L)\geq 0$, one has 
$$
|SW_X(L)-SW_X(K_X\otimes L^{-1})|=1.
$$
Now observe that if $SW_X(L)\neq 0$ and $c_1(L)=0$, $L$ must be the trivial orbifold complex line
bundle (cf. \cite{T}). Since $K_X$ is torsion of order $n>1$ (recall that $n=\text{lcm} \{m_i,m_j\}$ 
is the minimal
number such that $K_X^n$ is trivial, cf. Lemma 2.1), it is clear that one of the orbifold complex line bundles, $L$ or $K_X\otimes L^{-1}$, must be a non-trivial torsion bundle, hence has vanishing Seiberg-Witten invariant. It follows easily that one of $SW_X(L)$, $SW_X(K_X\otimes L^{-1})$ must equal $\pm 1$. This implies that either $L$ or $K_X\otimes L^{-1}$ must be the trivial orbifold complex line bundle. Theorem 3.4 follows easily.  

\end{proof}
 
Suppose $n=\text{lcm} \{m_i,m_j\}>2$. Then $L=K_X^k$ is nontrivial and not equal to $K_X$ for $k=2,3,\cdots,n-1$. By Theorem 3.4, $d(K_X^k)<0$ for any $2\leq k\leq n-1$. On the other hand, 
note that a singular component $\Sigma_i$ makes zero contribution to $d(L)$ for any $L$ if 
$\Sigma_i$ is a torus, and one can check directly that $I_j(K_X^k)=0$ for any $k$ if $q_j$ is a Du Val singularity (i.e., $m_j=1$). It is easy to see that we have the following
 
 \begin{corollary}
 Suppose $n=\text{lcm} \{m_i,m_j\}>2$. Then the following are true. 
 \begin{itemize}
 \item [{(1)}] For each $2\leq k\leq n-1$, $d(K_X^k)$ is a negative, even integer.
 \item [{(2)}] Either there is a singular component $\Sigma_i$ which is not a torus, or there is a singular point $q_j$ which is non-Du Val. 
 \end{itemize}
 \end{corollary}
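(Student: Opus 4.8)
The plan is to derive both parts directly from Theorem 3.4 together with the parity and contribution observations already set up in the paragraph preceding the corollary. For part (1), the key point is that when $n>2$, the orbifold line bundle $L=K_X^k$ satisfies $c_1(L)=0\in H^2(|X|;\Q)$ for every $k$, since $c_1(K_X)=0$ by assumption (i). For $2\leq k\leq n-1$, Lemma 2.1 guarantees that $K_X^k$ is a nontrivial torsion bundle (as $n$ is the minimal positive integer for which $K_X^n$ is trivial), and it is also distinct from $K_X$ because $k\neq 1$ and $k\not\equiv 1 \pmod n$ in this range. Hence Theorem 3.4 applies strictly and gives $d(K_X^k)<0$. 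The remaining assertion is that $d(K_X^k)$ is an \emph{even} integer; for this I would observe that $d(L)$ is by construction the dimension of a Seiberg-Witten moduli space, and that for a symplectic $4$-orbifold with $b_1=0$ the formal dimension is computed from an index that is automatically even (the relevant index is $\frac{1}{4}(c_1(L)^2 - (2\chi+3\sigma))$-type quantity in the orbifold setting, whose parity is controlled by the even intersection form and the orbifold correction terms). The cleaner route, which I expect to use, is to invoke the identity $d(L)=d(K_X\otimes L^{-1})$ noted in the text together with integrality: since $d$ takes integer values and is symmetric under $L\mapsto K_X\otimes L^{-1}$, and since the wall-crossing formula forces $d(L)$ and $d(K_X\otimes L^{-1})$ to have the same parity as the expected dimension of the Taubes-chamber moduli problem, the value is even. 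Thus $d(K_X^k)$ is a negative even integer.

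For part (2), the argument is a short contrapositive. Suppose every $2$-dimensional singular component $\Sigma_i$ is a torus and every isolated singular point $q_j$ is of Du Val type (i.e.\ $m_j=1$). By the two observations recorded just before the corollary, a torus component contributes $I_i(L)\chi(\Sigma_i)=0$ to $d(L)$ for every $L$ because $\chi(\Sigma_i)=0$, and a Du Val point contributes $I_j(K_X^k)=0$ for every $k$. Summing these contributions in the definition $d(L)=\sum_i I_i(L)\chi(\Sigma_i)+\sum_j I_j(L)$, I would conclude that $d(K_X^k)=0$ for all $k$. But part (1) (applied with $n>2$, so that the range $2\leq k\leq n-1$ is nonempty) forces $d(K_X^2)<0$, a contradiction. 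Therefore the two stated conditions cannot both hold, which is exactly the disjunction in part (2): either some $\Sigma_i$ is not a torus, or some $q_j$ is non-Du Val.

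The genuinely delicate step is establishing the \emph{even} integrality in part (1); the negativity is immediate from Theorem 3.4, but confirming the parity requires care with the orbifold index formula referenced in \cite{C0}, Appendix A. I would handle this by noting that the expected dimension of the Seiberg-Witten moduli space for a line bundle $L$ with $c_1(L)^2=0$ (which holds here since $c_1(L)=0$ rationally) reduces, via the orbifold index theorem, to a sum of the smooth contribution $\frac{1}{4}(c_1(L)^2-2\chi(X)-3\sigma(X))$ and the local isotropy contributions $I_i,I_j$; the smooth part is even because $b_1(X)=0$ and $b_2^+(X)=1$ pin down $2\chi+3\sigma$ to a value of fixed parity, while the symmetry $d(L)=d(K_X\otimes L^{-1})$ rules out an odd-integer value by pairing each potential odd contribution with its dual. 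The rest of the corollary is then a purely formal consequence of Theorem 3.4 and the two vanishing computations, so no further obstacle arises.
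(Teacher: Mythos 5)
Your treatment of the negativity in part (1) and of all of part (2) is correct and coincides with the paper's own (equally terse) argument: the entire proof in the paper is the paragraph preceding the corollary, namely that for $2\leq k\leq n-1$ the bundle $K_X^k$ has $c_1=0$ rationally, is nontrivial, and is distinct from $K_X$ (if $K_X^k$ were trivial, or $K_X^k=K_X$ so that $K_X^{k-1}$ were trivial, the exponent would be a positive integer smaller than $n$, contradicting the divisibility statement in Lemma 2.1), whence Theorem 3.4 gives the strict inequality $d(K_X^k)<0$; and part (2) follows by contradiction exactly as you say, using that a torus component contributes $I_i(L)\chi(\Sigma_i)=0$, that a Du Val point contributes $I_j(K_X^k)=0$, and that the range $2\leq k\leq n-1$ is nonempty precisely because $n>2$.

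However, your argument for \emph{evenness} --- the one step you yourself flag as delicate --- does not work as written, and this is a genuine gap. The ``cleaner route'' is circular: the symmetry $d(L)=d(K_X\otimes L^{-1})$ is an equality between two integers and carries no parity information at all (a function identically equal to $-3$ is symmetric under $L\mapsto K_X\otimes L^{-1}$), and the assertion that wall-crossing forces $d(L)$ ``to have the same parity as the expected dimension of the Taubes-chamber moduli problem'' is vacuous, since $d(L)$ \emph{is} that expected dimension. Your first route is also off: the parity of a quantity of the form $\frac{1}{4}\bigl(c_1^2-2\chi-3\sigma\bigr)$ is not ``controlled by the even intersection form'' --- the spaces in play here have odd intersection forms (the resolution $\tilde{X}$ is a rational surface). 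The justification the paper implicitly relies on, via its identification of $d(L)$ with the dimension of the Seiberg-Witten moduli space (cf.\ \cite{C0}, Appendix A), is the structure of the index of the SW deformation complex: $d(L)=2\,\mathrm{ind}_{\C}D_A-\bigl(1-b_1(X)+b_2^+(X)\bigr)$, where $\mathrm{ind}_{\C}D_A$ is the complex index of the spin-c Dirac operator and is an honest integer also in the orbifold setting (this integrality is exactly what makes $d(L)$, defined as a sum of the rational local contributions $I_i,I_j$, an integer in the first place). Since $b_1(X)=0$ and $b_2^+(X)=b_2^+(\tilde{X})=1$, this gives $d(L)\equiv 1-b_1+b_2^+\equiv 0 \pmod 2$. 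With this substitution part (1) is complete, and part (2) is unaffected since it uses only the negativity.
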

 
 \noindent{\bf Remarks:}
 It is possible that for a singular point $q_j$, the number $I_j(K_X^k)>0$ for some $2\leq k\leq n-1$;
 this depends on the isotropy type of $q_j$.
 For example, if $q_j$ is of isotropy of order $5$ of type $(1,1)$, then
 $$
 I_j(K_X^2)=\frac{1}{5}\sum_{1\neq \lambda\in\C^\ast, \lambda^5=1}\frac{2(\lambda^4-1)}{(1-\lambda)(1-\lambda)}=\frac{2}{5}.
 $$
So the conditions $d(K_X^k)<0$ give rise to nontrivial constraints on the singular set.

\section{A successive blowing-down procedure}

In this section, we describe a general successive symplectic blowing-down procedure. 
First, we shall adopt the following notations: we set $X_N:=\C\P^2\# N \overline{\C\P^2}$, 
which is equipped with a symplectic structure denoted by $\omega_N$. In order to emphasize the dependence of the canonical class on the symplectic structure, we shall denote by $K_{\omega_N}$ the canonical line bundle of $(X_N,\omega_N)$.

\subsection{The statement}
The successive symplectic blowing-down procedure, to be applied to $(X_N,\omega_N)$ for 
$N\geq 2$, depends on a choice of the so-called {\bf reduced basis} of $(X_N,\omega_N)$. 
To explain this notion, we let $\E_{X_N}$ be the set of classes in $H^2(X_N)$ which can be represented by a smooth 
$(-1)$-sphere, and let $\E_{\omega_N}:=\{E\in \E_{X_N}| c_1(K_{\omega_N})\cdot E= -1\}$. 
Then each class in $\E_{\omega_N}$ can be represented by a symplectic $(-1)$-sphere; 
in particular, $\omega_N(E)>0$ for any $E\in\E_{\omega_N}$. With this understood, a basis 
$H, E_1,E_2,\cdots, E_N$ of $H^2(X_N)$ is called a reduced basis of $(X_N,\omega_N)$ if the following are true:
\begin{itemize}
\item it has a standard intersection form, i.e., $H^2=1$, $E^2_i=-1$ and $H\cdot E_i=0$ for any $i$, and $E_i\cdot E_j=0$ for any $i\neq j$; 
\item $E_i\in\E_{\omega_N}$ for each $i$, and moreover, the following area conditions are satisfied
for $N\geq 3$: $\omega_N(E_N)=\min_{E\in\E_{\omega_N}}\omega_N(E)$, and for any $2<i<N$, 
$\omega_N(E_i)=\min_{E\in\E_i}\omega_N(E)$, where
$\E_i:=\{E\in\E_{\omega_N}| E\cdot E_j=0 \;\; \forall j>i\}$ for any $i<N$; 
\item $c_1(K_{\omega_N})=-3H+E_1\cdots+E_N$.
\end{itemize}

Reduced bases always exist. Moreover, if we assume $\omega(E_1)\geq \omega(E_2)$
without loss of generality, then a reduced basis $H, E_1,E_2,\cdots, E_N$ obeys the following constraints in symplectic area (cf. \cite{LW}): 
\begin{itemize}
\item $\omega_N(H)>0$, and for any $j>i$, $\omega_N(E_i)\geq \omega_N(E_j)$;
\item for any $i\neq j$, $H-E_i-E_j\in\E_{\omega_N}$, so that $\omega_N(H-E_i-E_j)>0$; 
\item $\omega_N(H-E_i-E_j-E_k)\geq 0$ for any distinct $i,j,k$.
\end{itemize}
We remark that a reduced basis is not necessarily unique, however, the symplectic areas of its 
classes 
$$(\omega_N(H),\omega_N(E_1), \omega_N(E_2), \cdots, \omega_N(E_N))$$ 
uniquely determine the symplectic structure $\omega_N$ up to symplectomorphism, cf. \cite{KK}. 

\begin{definition}
The symplectic structure $\omega_N$ is called {\bf odd} if $\omega_N(H-E_1-2E_2)\geq 0$, and is called {\bf even} if otherwise.
\end{definition}

We remark that since $\omega_N$ is determined by 
$\omega_N(H),\omega_N(E_1), \omega_N(E_2), \cdots, \omega_N(E_N)$
up to symplectomorphism, the above definition does not depend on the choice of the reduced basis.

The following technical result from \cite{KK} is crucial to our construction. 

\vspace{1.5mm}

{\it Suppose $N\geq 2$. Then for {\bf any} $\omega_N$-compatible almost complex structure $J$, any class $E\in\E_{\omega_N}$ which has the minimal symplectic area can be represented by an embedded $J$-holomorphic sphere. In particular, for $N\geq 3$, the class $E_N$ in a reduced basis 
$H, E_1,\cdots, E_N$ can be represented by a $J$-holomorphic $(-1)$-sphere for any given $J$.
}
\vspace{1.5mm}

With the preceding understood, the following lemma makes it possible for a successive blowing-down procedure. 

\begin{lemma}
Let $H, E_1,\cdots,E_N$ be a reduced basis of $(X_N,\omega_N)$, and let $C_N$ be any symplectic
$(-1)$-sphere in $(X_N,\omega_N)$ representing the class $E_N$. Denote by $(X_{N-1},\omega_{N-1})$ the symplectic blowdown of $(X_N,\omega_N)$ along $C_N$. Then $H, E_1,\cdots,E_{N-1}$
naturally descend to a reduced basis $H^\prime, E_1^\prime,\cdots,E_{N-1}^\prime$ of $(X_{N-1},\omega_{N-1})$. When $N\geq 3$, $\omega_{N-1}$ is odd if and only if $\omega_N$ is odd. 
\end{lemma}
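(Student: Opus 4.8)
The plan is to blow down along $C_N$ and verify that the descended basis satisfies all three defining properties of a reduced basis, then separately track the oddness condition. First I would set up the descent: blowing down the symplectic $(-1)$-sphere $C_N$ (representing $E_N$) produces $(X_{N-1},\omega_{N-1})$, and under the induced isomorphism $H^2(X_{N-1})\cong E_N^{\perp}\subset H^2(X_N)$ the classes $H,E_1,\dots,E_{N-1}$ descend to classes $H',E_1',\dots,E_{N-1}'$. Since all of $H,E_1,\dots,E_{N-1}$ already satisfy $\cdot E_N=0$, they survive the descent, and the standard intersection form is preserved because the isomorphism is an isometry on $E_N^{\perp}$. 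The canonical class transforms as $c_1(K_{\omega_{N-1}})$ pulling back to $c_1(K_{\omega_N})+E_N = -3H+E_1+\dots+E_{N-1}$, so the third property $c_1(K_{\omega_{N-1}})=-3H'+E_1'+\dots+E_{N-1}'$ holds immediately. The symplectic areas are preserved: $\omega_{N-1}(E_i')=\omega_N(E_i)$ for $i\le N-1$ and $\omega_{N-1}(H')=\omega_N(H)$, since blowing down does not change areas of classes orthogonal to $E_N$.

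The main work is the second property, the minimality/area conditions defining a reduced basis. I would need to show $\E_{\omega_{N-1}}$ is identified with $\{E\in\E_{\omega_N}\mid E\cdot E_N=0\}=\E_{N-1}$ (in the paper's notation for the filtration $\E_i$), so that minimizers in the smaller manifold correspond exactly to the constrained minimizers upstairs. This is where I expect the main obstacle: one must check that every class representable by a $(-1)$-sphere in $X_{N-1}$ lifts to a class in $\E_{\omega_N}$ orthogonal to $E_N$, and conversely, and that the filtration conditions $\omega_{N-1}(E_i')=\min_{\E_i'}\omega_{N-1}$ translate correctly. The key point is that $\E_i:=\{E\in\E_{\omega_N}\mid E\cdot E_j=0\ \forall j>i\}$ already imposes orthogonality to $E_N$ for all $i<N$, so the defining filtration upstairs, restricted to $i\le N-1$, matches the filtration downstairs under the identification $\E_{\omega_{N-1}}=\E_{N-1}$. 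Establishing this set-level identification rigorously — i.e., that no exceptional classes are created or destroyed by the blow-down other than the expected correspondence — is the crux, and I would lean on the structure theory of $\E_{\omega_N}$ under blow-down together with the area constraints listed before Definition 4.1.

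Finally I would handle the oddness statement. By Definition 4.1, $\omega_N$ is odd iff $\omega_N(H-E_1-2E_2)\ge 0$, and this quantity depends only on $\omega_N(H),\omega_N(E_1),\omega_N(E_2)$, which are unchanged under the descent for $N\ge 3$ (since $E_1,E_2,H$ all descend with equal areas, and $N\ge 3$ guarantees $E_1,E_2$ are among the classes that survive, being distinct from $E_N$). Hence $\omega_{N-1}(H'-E_1'-2E_2')=\omega_N(H-E_1-2E_2)$, giving the equivalence $\omega_{N-1}$ odd $\iff$ $\omega_N$ odd. The one subtlety to check is that the normalization $\omega(E_1)\ge\omega(E_2)$ used in the oddness criterion is respected by the descent; since the area ordering on $E_1,\dots,E_{N-1}$ is inherited verbatim, this is automatic. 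Overall the proof is a careful bookkeeping of how $\E_{\omega_N}$, the intersection form, the canonical class, and the symplectic areas behave under a single symplectic blow-down, with the identification of exceptional-class sets being the step requiring the most care.
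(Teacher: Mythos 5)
Your proposal has the correct skeleton, and it matches the paper's structure: descend the basis, check the intersection form and the canonical class formula, reduce everything to an identification $\E_{\omega_{N-1}}\cong\E_{N-1}:=\{E\in\E_{\omega_N}\mid E\cdot E_N=0\}$ under which areas agree, and then read off the minimality conditions and the oddness criterion. Your remarks about the filtration $\E_i$ already imposing orthogonality to $E_N$, and about oddness depending only on the (preserved) areas of $H,E_1,E_2$, are also correct. But the proposal stops exactly where the real proof has to start: you flag the set-level identification as the crux and then propose to ``lean on the structure theory of $\E_{\omega_N}$ under blow-down,'' which is not an available tool. In the symplectic category there is no a priori blow-down theory for exceptional classes --- establishing that theory is precisely the content of this lemma --- so invoking it is circular, and the argument as written has a genuine gap.

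Concretely, the missing step is: given $E\in\E_{\omega_N}$ with $E\cdot E_N=0$, why does $E$ admit a symplectic $(-1)$-sphere representative \emph{disjoint from the given sphere $C_N$}? (Note that the lemma allows $C_N$ to be an arbitrary symplectic representative of $E_N$, so this must work for every choice.) Vanishing of the algebraic intersection number does not allow one to make two surfaces disjoint in a $4$-manifold, so this requires pseudo-holomorphic input, and this is how the paper does it: choose $J_1$ making $C_N$ holomorphic and a generic $J_0$ joined to $J_1$ by a path $J_t$; then $E$ has a $J_0$-holomorphic $(-1)$-sphere representative $C_E$ (genericity); by the quoted result of [KK], the \emph{minimal-area} class $E_N$ has a $J_t$-holomorphic representative $C_t$ for every $t$, with $C_1=C_N$; the curves $C_E$ and $C_0$ are disjoint by positivity of intersections; and by Proposition 0.3 of [ST] the family $C_t$ is covered by an ambient symplectic isotopy $\psi_t$, so $\psi_1(C_E)$ is a symplectic $(-1)$-sphere in class $E$ disjoint from $C_N$, which therefore survives the blow-down with its area unchanged. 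The easy converse (push a smooth $(-1)$-sphere in $X_{N-1}$ off the filled-in ball, lift it to $X_N$, and verify $c_1(K_{\omega_N})\cdot E=-1$ from $c_1(K_{\omega_N})=c_1(K_{\omega_{N-1}})+E_N$) is also needed --- your minimality claim for $E_{N-1}'$ silently uses it --- and is likewise absent. Without these two arguments the identification, the area equality, and hence the reduced-basis and oddness conclusions do not follow.
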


\begin{proof}
It is clear that $H, E_1,\cdots,E_{N-1}$ naturally descend to a basis $H^\prime, E_1^\prime,\cdots,E_{N-1}^\prime$ of $H^2(X_{N-1})$. We need to show that it is a reduced basis of $(X_{N-1},\omega_{N-1})$, and moreover, when $N\geq 3$, $\omega_{N-1}$ is odd if and only if $\omega_N$ is odd. 

First of all, we note that $H^\prime, E_1^\prime,\cdots,E_{N-1}^\prime$ has the standard intersection form, and the symplectic
canonical class of $(X_{N-1},\omega_{N-1})$ is given by 
$$
c_1(K_{\omega_{N-1}})=-3H^\prime+E_1^\prime+\cdots+E_{N-1}^\prime.
$$
It remains to verify that for each $i$, $E_i^\prime\in \E_{\omega_{N-1}}$, and moreover, the following area conditions 
are satisfied: $\omega_{N-1}(E_{N-1}^\prime)=\min_{E^\prime\in \E_{\omega_{N-1}}}\omega_{N-1}(E^\prime)$, 
and for any $i<N-1$, $\omega_{N-1}(E_{i}^\prime)=\min_{E^\prime\in \E_{i}^\prime}\omega_{N-1}(E^\prime)$, where 
$\E_i^\prime:=\{E^\prime\in\E_{\omega_{N-1}}| E^\prime\cdot E_j^\prime=0, \forall j>i\}$. 

The key step is to show that the set $\E_{N-1}=\{E\in\E_{\omega_N}|E\cdot E_N=0\}$ may be identified with the set
$\E_{\omega_{N-1}}$ by identifying the elements of $\E_{N-1}$ with their descendants in $H^2(X_{N-1})$, and moreover,
under this identification the symplectic forms $\omega_N=\omega_{N-1}$. To see this, let $E\in \E_{N-1}$ be any class 
and let $E^\prime$ be its descendant in $H^2(X_{N-1})$. We choose a $J_1$ such that $C_N$ is $J_1$-holomorphic. 
Then pick a generic $J_0$ and connect $J_0$ and $J_1$ through a smooth path $J_t$. Since $J_0$ is generic, $E$ can
be represented by a $J_0$-holomorphic $(-1)$-sphere, denoted by $C_E$. On the other hand, since $E_N$ has minimal
symplectic area, for each $t$, $E_N$ is represented by a $J_t$-holomorphic $(-1)$-sphere $C_t$, which depends on $t$
smoothly, with $C_1$ at $t=1$ being the original $(-1)$-sphere $C_N$. Note also that the $J_0$-holomorphic $(-1)$-spheres
$C_E$ and $C_0$ are disjoint because $E\cdot E_N=0$. With this understood, we note that the isotopy from $C_0$ to $C_1=
C_N$ is covered by an ambient isotopy $\psi_t: X_N\rightarrow X_N$, where each $\psi_t$ is a symplectomorphism
(cf. Proposition 0.3 in \cite{ST}). It follows easily that $E$ is represented by the symplectic $(-1)$-sphere $\psi(C_E)$, which is
disjoint from $C_N$. This shows that the descendant $E^\prime$, which is represented by the symplectic $(-1)$-sphere 
$\psi(C_E)$ in $X_{N-1}$, lies in the set $\E_{\omega_{N-1}}$. Moreover, $\omega_N(E)=\omega_{N-1}(E^\prime)$.
Finally, let $E^\prime$ be any class in $\E_{\omega_{N-1}}$. Then $E^\prime$ can be represented by a smooth $(-1)$-sphere,
to be denoted by $S^\prime$, and $E^\prime \cdot c_1(K_{\omega_{N-1}})=-1$. Now recall that the $4$-manifold $X_{N-1}$ 
is obtained from $X_N$ by removing the $(-1)$-sphere $C_N$ and then filling in a symplectic $4$-ball $B$. Without loss of
generality, we may assume $S^\prime$ is lying outside $B$, because if otherwise, one can always apply an ambient isotopy
to push $S^\prime$ outside of $B$. With this understood, the smooth sphere $S^\prime$ can be lifted to a smooth sphere $S$ in
$X_N$. Let $E$ be the class of $S$. Then clearly $E\cdot E_N=0$ and $E^\prime$ is the descendant of $E$ in $H^2(X_{N-1})$.
To see that $E\in \E_{N-1}$, we only need to verify that $E\cdot c_1(K_{\omega_N})=-1$. But this follows easily from the fact that
$c_1(K_{\omega_N})=c_1(K_{\omega_{N-1}})+E_N$ and $E^\prime \cdot c_1(K_{\omega_{N-1}})=-1$. Hence the claim that
$\E_{N-1}$ and $\E_{\omega_{N-1}}$ are naturally identified and the symplectic forms $\omega_N$ and $\omega_{N-1}$ agree. 

With the preceding understood, it follows easily that for each $i=1,2,\cdots,N-1$, $E_i^\prime\in \E_{\omega_{N-1}}$. Moreover,
$\omega_{N-1}(E_{N-1}^\prime)=\min_{E^\prime\in \E_{\omega_{N-1}}}\omega_{N-1}(E^\prime)$. We further observe that for
each $i<N-1$, the subset $\E_i$ of $\E_{N-1}$ is identified with the subset $\E_i^\prime$ of $\E_{\omega_{N-1}}$ under the 
identification between $\E_{N-1}$ and $\E_{\omega_{N-1}}$. 
With $\omega_N$ and $\omega_{N-1}$ agreeing with each other under
the identification, it follows immediately that $H^\prime, E_1^\prime,\cdots,E_{N-1}^\prime$
is a reduced basis of $(X_{N-1},\omega_{N-1})$. Moreover, when $N\geq 3$, $\omega_{N-1}$ is odd if and only if $\omega_N$ is odd. 
This finishes off the proof.

\end{proof}

For simplicity, we shall continue to use the notations $H,E_1,\cdots,E_{N-1}$ to denote the descendants in the symplectic blowdown $(X_{N-1},\omega_{N-1})$, instead of the notations 
$H^\prime, E_1^\prime,\cdots,E_{N-1}^\prime$ in the lemma.

Now fixing any reduced basis $H, E_1,E_2,\cdots, E_N$, we can successively blow down the classes $E_N$, $E_{N-1}, \cdots, E_3$, reducing $(X_N,\omega_N)$ to $(X_2,\omega_2)$.
To further blow down $(X_2,\omega_2)$, we note that $\omega_N$ is odd if and only if $E_2$ has the minimal area among the classes in $\E_2$, i.e., $\omega_N(E_2)=\min_{E\in\E_2}\omega_N(E)$,
as it is easy to see that
$$
\E_2=\{E_1,E_2, H-E_1-E_2\}.
$$
Thus when $\omega_N$ is odd, we can further blow down $E_2$ to reach 
$\C\P^2\# \overline{\C\P^2}$. If $\omega_N$ is even, then 
$\omega_N(H-E_1-E_2)=\min_{E\in\E_2}\omega_N(E)$. In this case, by blowing down the 
$(-1)$-class $H-E_1-E_2$, we reach the final stage $\s^2\times \s^2$. 

Since at each stage of the blowing-down procedure the $(-1)$-class has minimal area, it can be 
represented by a $J$-holomorphic $(-1)$-sphere for any given $J$. This property allows us to 
construct, in a canonical way, the descendant of a given set of symplectic surfaces $D=\cup_k F_k$ in $(X_N,\omega_N)$ under the successive blowing-down procedure. Without loss of much generality, we shall assume $D$ satisfies the following condition:
\begin{itemize}
\item [{(\dag)}] Any two symplectic surfaces $F_k,F_l$ in $D$ are either disjoint, or intersect transversely and positively at one point, and no three distinct components of $D$ meet in one point. 
\end{itemize}

Further assumptions on $D$ are required so that the procedure is reversible. 
In order to explain this, observe that the class of each $F_k$ in $D$ can be written 
with respect to the reduced basis $H, E_1, E_2,\cdots,E_N$ in the following form: 
$$
F_k=aH-\sum_{i=1}^N b_i E_i, \mbox{ where $a,b_i\in\Z$}.
$$ 
We shall call the numbers $a$ and $b_i$ the {\it $a$-coefficient} and {\it $b_i$-coefficients} of $F_k$. (See Section 3 of \cite{C} for some general properties of the $a$-coefficient and $b_i$-coefficients.) The expression $F_k=aH-\sum_{i=1}^N b_i E_i$ is called the {\it homological expression} of $F_k$ (with respect to the reduced basis). 

With the preceding understood, the assumptions on $D$ are concerned with the homological expressions of the components $F_k$ whose $a$-coefficients are zero. More concretely, 
it is known (cf. \cite{C}, Lemma 3.3) that such a component must be a symplectic sphere, and its $b_i$-coefficients are equal to $1$ except for one of them, which equals $-1$. We shall call the $E_i$-class with the $(-1)$ $b_i$-coefficient the {\it leading class} of the component. With this understood, it is easy to show that for any given component $S$ of $D$
which has zero $a$-coefficient, there are at most two components $F_k$ in $D$ such that 
the expression of $F_k$ contains the leading class of $S$ and $F_k$ has a zero $a$-coefficient 
(cf. Lemma 4.5). The assumptions we shall impose on $D$ are concerned with the homological expressions of such components $F_k$ for any given such $S$ in $D$. 

To be more precise, let $S\subset D$ be any such symplectic sphere, and we write the homological expression of $S$ as 
$$
S=E_n-E_{l_1}-E_{l_2}-\cdots-E_{l_\alpha}, \mbox{ where  $n<l_s$ for all $s$}. 
$$
Then the imposed assumptions on $D$ are stated as follows:
\begin{itemize}
\item [{(a)}] Suppose there are two symplectic spheres $S_1,S_2\subset D$ whose $a$-coefficients equal zero and whose homological expressions contain the leading class $E_n$ of $S$. Then for any class $E_{l_s}$ which appears in $S$, but appears in neither $S_1$ nor $S_2$, there is at most one component $F_k$ of $D$ other than $S$, whose homological expression contains $E_{l_s}$ with 
$F_k\cdot E_{l_s}=1$.
\item [{(b)}] Suppose there is only one symplectic sphere $S_1\subset D$ whose $a$-coefficient 
equals zero and whose homological expression contains the leading class $E_n$ of $S$. 
Then there is at most one class $E_{l_s}$ in $S$, which does not appear in $S_1$, but either appears in the expressions of more than one components $F_k\neq S$, or appears in the expression of only one component $F_k\neq S$ but with $F_k\cdot E_{l_s}>1$.
\end{itemize}
(We remark that when $S$ is a $(-2)$-sphere or $(-3)$-sphere, and $S_1,S_2$ are disjoint from $S$,
the assumptions (a) and (b) are automatically satisfied.)

With the preceding understood, we now state the theorem concerning the descendant of $D$ under
the successive symplectic blowing-down procedure. For simplicity, we shall only discuss the case where the symplectic structure $\omega_N$ is odd, which is the most relevant case for us. The case
where $\omega_N$ is even can be similarly dealt with. 

\begin{theorem}
Let $D=D_N=\cup_k F_k$ be a union of symplectic surfaces in $(X_N,\omega_N)$, where 
$N\geq 2$ and $\omega_N$ is odd, such that $D_N$ satisfies the condition {\em(}\dag{\em)}. Fix any reduced 
basis $H,E_1,E_2,\cdots,E_N$ of $(X_N,\omega_N)$ such that the assumptions 
$\left(a\right)$ and $\left(b\right)$ are satisfied for the homological expressions of the components $F_k$ in $D_N$. We set 
$$
\E_0(D_N)=\{E_i|\mbox{there is no $F_k\subset D_N$ with zero $a$-coefficient such that $E_i\cdot F_k>0$}\}.
$$
Then 
there is a well-defined successive symplectic blowing-down procedure associated to the reduced basis, blowing down the classes $E_N,E_{N-1},\cdots,E_2$ successively, such that $(X_N,\omega_N)$ is reduced to 
$(X_1,\omega_1)$ {\em(}note that $X_1=\C\P^2\# \overline{\C\P^2}${\em)}, and $D_N$ is transformed to its descendant $D_1$ in $(X_1,\omega_1)$, which is a union of $J_1$-holomorphic curves with respect to some $\omega_1$-compatible almost complex structure $J_1$ on $(X_1,\omega_1)$, where the singularities and the intersection pattern of the components of $D_1$ are canonically determined by the homological expressions of the components $F_k$ of $D_N$. Moreover, under any of the conditions $\left(c\right),\left(d\right),\left(e\right)$ listed below, one can further blow down the class $E_1$
to reach $\C\P^2$ in the final stage of the successive blowing-down, with the descendant $D_0$ of $D_1$ in $\C\P^2$ having the same properties of $D_1$:
\begin{itemize}
\item [{(c)}] The classes $E_1,E_2$ have the same area, i.e., $\omega_N(E_1)=\omega_N(E_2)$.
\item [{(d)}] The class $E_1$ is the leading class of a symplectic sphere $S\subset D_N$. 
\item [{(e)}] There is a component $F_k=aH-bE_1-\sum_{i>1}b_i E_i$ of $D_N$ such that $2b<a$.
\end{itemize}

More specifically, let $\E(D_N):=\E_0(D_N)\setminus \{E_1\}$ if the final stage of the successive blowing-down is $(X_1,\omega_1)$ and let $\E(D_N):=\E_0(D_N)$ if the final stage is $\C\P^2$. Then the new intersection points in $D_1$ or $D_0$ are labelled by the elements of $\E(D_N)$. For each new intersection point $\hat{E}_i$ labelled by $E_i\in\E(D_N)$, there is a small $4$-ball $B(\hat{E}_i)$ centered at $\hat{E}_i$, with standard symplectic structure and complex structure, such that $D_1\cap B(\hat{E}_i)$ or $D_0\cap B(\hat{E}_i)$ consists of a union of holomorphic discs intersecting at 
$\hat{E}_i$, which are either embedded or singular at $\hat{E}_i$ with a singularity modeled by equations of the form  $z_1^n=a z_2^m$ in some compatible complex coordinates $(z_1,z_2)$ {\em(}i.e., the link of the singularity is always a torus knot{\em)}. The orders of tangency of the intersections at $\hat{E}_i$ as well as the singularity types in $B(\hat{E}_i)$ are completely and canonically determined by the pattern of appearance of the class $E_i$ and the classes not contained in 
$\E(D_N)$ in the homological expressions of the components $F_k$ in $D_N$. Finally, a component of $D_N$ descends to a component in $D_1$ or $D_0$ if and only if it has nonzero $a$-coefficient {\em(}a component with zero $a$-coefficient disappears{\em)}. 
\end{theorem}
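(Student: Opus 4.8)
The plan is to prove the statement by induction on the number of blow-downs, using Lemma 4.2 as the inductive engine together with the quoted representability result from \cite{KK}. The decisive feature of that result is that the minimal-area class $E_m$ in a reduced basis is represented by an embedded $J$-holomorphic $(-1)$-sphere for \emph{every} $\omega_m$-compatible $J$; this lets me first fix an almost complex structure and only afterwards produce the exceptional sphere to be collapsed. Accordingly, at each stage $m$ (blowing down $E_m$, for $m=N,N-1,\dots,2$) I would begin by invoking condition $(\dag)$ together with the standard fact that a finite family of symplectic surfaces meeting transversally and positively can be made simultaneously $J_m$-holomorphic for a suitable $\omega_m$-compatible $J_m$, and fix such a $J_m$ so that every component of $D_m$ is $J_m$-holomorphic. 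By \cite{KK} the class $E_m$ is then represented by an embedded $J_m$-holomorphic $(-1)$-sphere $C_m$, and since $C_m$ and the components of $D_m$ are all $J_m$-holomorphic, positivity of intersections controls how they meet: the local intersection multiplicity of $C_m$ with a component $F_k$ equals $C_m\cdot F_k=E_m\cdot F_k=b_m$, the $b_m$-coefficient of $F_k$. Blowing down $C_m$ produces $(X_{m-1},\omega_{m-1})$, and Lemma 4.2 guarantees that $H,E_1,\dots,E_{m-1}$ descend to a reduced basis, so the induction continues down to $(X_1,\omega_1)$.

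Next I would track the fate of the components and locate the new points. A component $F_k$ with nonzero $a$-coefficient survives each step: its homological descendant is obtained by deleting the $E_m$-coefficient, and its image passes through the new point with multiplicity equal to its $b_m$-coefficient. A component $S$ with zero $a$-coefficient has, by \cite{C}, Lemma 3.3, the form $S=E_n-E_{l_1}-\cdots-E_{l_\alpha}$ with $n<l_s$; once the higher-index classes $E_{l_1},\dots,E_{l_\alpha}$ have been blown down, the descendant of $S$ lies in the class $E_n$, so at the moment $E_n$ is collapsed $S$ is, up to isotopy, the exceptional sphere itself and disappears. This proves the final assertion that a component descends if and only if its $a$-coefficient is nonzero, and it shows that the collapse of $E_i$ creates a genuine intersection point of the surviving configuration precisely when no zero-$a$ component meets $E_i$ positively, i.e.\ exactly for $E_i\in\E_0(D_N)$; removing $E_1$ when the procedure terminates at $(X_1,\omega_1)$ then gives the labelling set $\E(D_N)$.

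The heart of the proof, and the step I expect to be the main obstacle, is the local analysis in a Darboux ball $B(\hat{E}_i)$ around each new point. There the surviving components through $\hat{E}_i$ appear as $J$-holomorphic discs, and I must show that their pairwise tangency orders and individual singularity types are canonically read off from the way the class $E_i$, together with the classes already collapsed, occurs in the homological expressions of the $F_k$. The mechanism is the classical correspondence between a plane-curve singularity and the intersection of its proper transform with the exceptional sphere under blowing up: each time a class is collapsed, the multiplicities and the mutual contact of the discs are updated by this rule, and I would argue inductively that the resulting germ is always a union of branches cut out by equations of the form $z_1^n=a z_2^m$, whose links are torus knots. Assumptions $(a)$ and $(b)$ are exactly what is needed to keep this under control: they bound, for each leading class, the number of components sharing a given $E_{l_s}$ and the excess of the corresponding intersection numbers, which prevents three or more branches from colliding and forces the torus-knot normal form; the parenthetical remark that $(a)$ and $(b)$ hold automatically for disjoint $(-2)$- and $(-3)$-spheres is the base case of this analysis. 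Since at every step the collapsed sphere has minimal area and is holomorphic for the \emph{given} $J$, the blow-up of $\hat{E}_i$ recovers the proper transforms up to the choice of tangent directions, a smooth-isotopy datum, which is what underlies the reversibility of the whole construction.

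It remains to pass from $(X_1,\omega_1)$ to $\C\P^2$. The only class in $\E_{\omega_1}$ for $X_1=\C\P^2\#\overline{\C\P^2}$ is $E_1$, so it is automatically of minimal area and hence $J_1$-holomorphic; the content of the hypotheses $(c)$, $(d)$, $(e)$ is to guarantee that the representative of $E_1$ may be chosen compatibly with the configuration $D_1$, so that the final collapse yields a descendant $D_0$ of the same controlled type. Condition $(d)$ supplies such a representative directly, as the descendant of a zero-$a$ sphere with leading class $E_1$; condition $(c)$, via the area identities of Lemma 4.2 and the oddness of $\omega_N$, forces $\omega_1(E_1)<\omega_1(H-E_1)$ and thereby positions the $E_1$-sphere favorably relative to the competing ruling class $H-E_1$; and condition $(e)$ produces a surviving component $F_k=aH-bE_1-\sum_{i>1}b_iE_i$ whose positivity against the $E_1$-sphere, given $2b<a$, pins down the contact data at $\hat{E}_1$. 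In each case the same local model of the preceding paragraph applies to the collapse of $E_1$, so $D_0$ inherits all the stated properties of $D_1$, completing the induction and the proof.
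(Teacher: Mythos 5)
Your proposal reproduces the correct skeleton (induction on blow\-downs via Lemma 4.2 and the Karshon--Kessler representability result, survival of components governed by the $a$-coefficient, labelling of new points by $\E(D_N)$), but it has genuine gaps at the two places where the paper has to work hardest. First, your claim that ``the local intersection multiplicity of $C_m$ with a component $F_k$ equals $C_m\cdot F_k=E_m\cdot F_k$'' conflates the homological intersection number with local multiplicities: a $J_m$-holomorphic representative $C_m$ may meet $F_k$ at several points, tangentially, or at points where several components of $D_m$ cross, and none of this is determined by homology. The paper's construction therefore \emph{perturbs} $C_m$ to a general position (transverse, $\omega$-orthogonal intersections away from the intersection points of $D_m$) whenever $C_m$ is not itself part of $D_m$; without this step the descendant configuration is not ``canonically determined by the homological expressions,'' which is the heart of the statement. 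Second, your local analysis invokes the ``classical correspondence'' between plane-curve singularities and proper transforms, i.e.\ the holomorphic blow-down calculus. But the symplectic blow-down is a cut-and-paste operation (remove the $(-1)$-sphere, glue in a standard symplectic ball via a Weinstein-model identification), and there is no holomorphic analog -- the paper stresses exactly this point. How a surface crossing the exceptional sphere extends across the glued ball must be \emph{proved} in the symplectic category; this is the content of the paper's Lemma 4.4, an explicit computation in the model $W(\delta_1)$ showing that transversal lines become tangent conics $z_1=b_iz_2^2$ and that branches $w_2^n=aw_1^m$ become $z_1^m=bz_2^{m+n}$. Relatedly, your reading of assumptions (a) and (b) as ``preventing three or more branches from colliding'' misses their actual role: the Weinstein identification of a neighborhood of $C_n$ has only \emph{two} distinguished Hopf fibers that can be prescribed (the images of $\{z_1=0\}$ and $\{z_2=0\}$), and these must be reserved for the intersection points with descendants of the zero-$a$ spheres $S_1,S_2$ (or the single bad point in case (b)), so that after blow-down those descendants are coordinate lines and the hypotheses of Lemma 4.4 are met when \emph{they} are blown down at a later stage. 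Assumptions (a), (b) guarantee that every other point of $C_n$ carries at most one transversal disc, which can be made to coincide with a Hopf fiber and hence extends linearly; this is what keeps the induction inside the class of local models of Lemma 4.4.

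The final stage is also not handled correctly. You assert that $E_1$, being the only class in $\E_{\omega_1}$, is ``automatically of minimal area and hence $J_1$-holomorphic,'' but the quoted representability theorem requires $N\geq 2$ and is unavailable on $X_1=\C\P^2\#\overline{\C\P^2}$; for a given $J_1$ the class $E_1$ may instead degenerate as $E_1=m(H-E_1)+C$ with $C$ a $J_1$-holomorphic sphere and $m\geq 1$. This failure is precisely why conditions (c), (d), (e) appear in the statement. The paper's arguments are: under (c), both $E_1$ and $E_2$ have minimal area at the stage $(X_2,\omega_2)$ (where $N=2$, so representability applies) and are blown down simultaneously; under (d), the descendant of $S$ is itself a symplectic $(-1)$-sphere in class $E_1$; under (e), one invokes the dichotomy of Lemma 2.3 of \cite{C2} and rules out the degenerate alternative by the intersection computation $0\leq C\cdot\hat{F}_k=(m+1)b-ma<0$. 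Your sketch for (c) (an area comparison with $H-E_1$) could in principle be completed by the same dichotomy, but you never cite such a result, and your reading of (e) as ``pinning down the contact data'' misses that its purpose is existence of the $J_1$-holomorphic representative, not the local geometry at $\hat{E}_1$.
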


{\bf Remarks:} (1) We shall call $D_0$ or $D_1$ a {\bf symplectic arrangement} of 
pseudoholomorphic curves. (We borrow the terminology from \cite{RuStar}, where in the case when
$D_0$ is a union of degree $1$ pseudoholomorphic spheres in $\C\P^2$, it is called a symplectic line arrangement.)

(2) Two situations of the new intersection points are worth mentioning, as they occur more generically: let $E_n\in\E(D_N)$ be any element. 
\begin{itemize}
\item [{(i)}] If $E_n$ is not the leading class of any symplectic sphere in $D_N$, 
then the descendants of the components of $D_N$ containing $E_n$ will intersect the $4$-ball $B(\hat{E}_n)$ in a union of holomorphic discs, which are all embedded and intersecting 
at $\hat{E}_n$ transversely.
\item [{(ii)}] If $E_n$ is the leading class of a symplectic sphere $S\subset D_N$, where
$$
S=E_n-E_{l_1}-E_{l_2}-\cdots-E_{l_\alpha}, 
$$
such that the classes $E_{l_s}$ in $S$ are not the leading class of any symplectic spheres in $D_N$,
then the holomorphic discs in $B(\hat{E}_n)$ are all embedded, and moreover, each $E_{l_s}$ determines a complex line (through the origin $\hat{E}_n$) in $B(\hat{E}_n)$, such that the descendants of the components of $D_N$ containing $E_{l_s}$ will intersect the $4$-ball $B(\hat{E}_n)$ in a union of holomorphic discs which are all tangent to the complex line determined by $E_{l_s}$, with a tangency of order $2$. 
\end{itemize}

(3) The successive blowing-down procedure is purely a symplectic operation; there are no 
holomorphic analogs. Note that the descendant $D_0$ or $D_1$ depends on the choice of the reduced basis, which in general is not necessarily uniquely determined by the symplectic structure 
$\omega_N$.
On the other hand, there is also flexibility in choosing the symplectic structure $\omega_N$ 
(cf. \cite{C}, Lemma 4.1). Hence it is not clear if there is a descendant $D_0$ or $D_1$ that is determined by $D_N$ itself. 

(4) The successive blowing-down procedure is reversible; by reversing the procedure (with either 
symplectic blowing-up or holomorphic blowing-up), one can recover $D_N\subset X_N$ up to a smooth isotopy. Note that in each step of the reversing successive blowing-up operation, one either takes 
the total transform or the proper transform (cf. \cite{MW}), depending on whether in the corresponding
blowing-down step, the $(-1)$-sphere being blown down is part of the descendant of $D_N$ or not. 

\subsection{The construction} Suppose we are given with a union of symplectic surfaces 
$D=D_N=\cup_k F_k$ in $(X_N,\omega_N)$ satisfying the condition (\dag). 
We shall first describe how to blow down $(X_N,\omega_N)$ along the class $E_N$ and how to
define the descendants of the components $F_k$ of $D_N$ in $(X_{N-1},\omega_{N-1})$.
First of all, we slightly perturb the symplectic surfaces $F_k$ if necessary, so that the intersection 
of $F_k$ is $\omega_N$-orthogonal (cf. \cite{Gompf}).
Furthermore, we choose an $\omega_N$-compatible almost complex structure $J_N$ which is integrable near each intersection point of the symplectic surfaces $F_k$ such that $D_N$ is $J_N$-holomorphic. With this understood, since $N\geq 2$ and $\omega_N$ is odd, we may represent the class $E_N$ by an embedded $J_N$-holomorphic sphere $C_N$. 

\subsubsection{Perturbing the $(-1)$-spheres to a general position}
An important feature of the successive blowing-down procedure is that, before we blow down the
$(-1)$-sphere $C_N$, we shall first put it in a general position, as long as $C_N$ is not part of $D_N$. We carry out this step as follows. 

The intersection of $C_N$ with each $F_k$ is isolated, though not necessarily transverse, and furthermore, $C_N$ may contain the intersection points of the components $F_k$ in $D_N$.
The local models for the intersection of $C_N$ with $D_N$ are as follows. If $p\in C_N\cap D_N$ is the intersection of $C_N$ with a single component $F_k$, then locally near $p$, $C_N$ and $F_k$ are given respectively by $z_2=0$ and $z_2=z_1^m + \text{higher order terms}$. If $p\in C_N\cap D_N$ is the intersection of $C_N$ with more than one components of $D_N$, then near $p$ there is a standard holomorphic coordinate system such that the relevant components of $D_N$ are given by complex lines through the origin, and $C_N$ is given by an embedded holomorphic disc through the origin. With this understood, it is easy to see that one can always slightly perturb $C_N$ to a symplectic $(-1)$-sphere, still denoted by $C_N$ for simplicity, such that $C_N$ obeys the following
{\bf general position condition:}

\vspace{1.5mm}

{\it $C_N$ intersects each $F_k$ transversely and positively, and $C_N$ does not contain any intersection points of the components of $D_N$. Furthermore, the intersection of $C_N$ with each $F_k$ is $\omega_N$-orthogonal {\em(}after a small perturbation if necessary, cf. \cite{Gompf}{\em)}. 
{\em(}We should point out that when $C_N$ is part of $D_N$, there is no need to perturb 
$C_N$.{\em)}
}

\vspace{1.5mm}

By the Weinstein neighborhood theorem, a neighborhood $U$ of $C_N$ is symplectically modeled by
a standard symplectic structure on a disc bundle associated to the Hopf fibration, where $C_N$ is identified with the zero-section. With this understood, for each $F_k$ which intersects $C_N$, 
we slightly perturb $F_k$ near the intersection points so that $F_k$ coincides with a fiber disc inside $U$. Now symplectically blowing down $(X_N,\omega_N)$ along $C_N$ amounts to cutting $X_N$ open along $C_N$ and then inserting a standard symplectic
$4$-ball of a certain radius back in (the radius of the $4$-ball is determined by the area of $C_N$). We denote the resulting symplectic $4$-manifold by $(X_{N-1},\omega_{N-1})$. Then the descendant of 
$F_k$ in $X_{N-1}$ is defined to be the symplectic surface, to be denoted by $\tilde{F_k}$, which is obtained by adding a complex linear disc to $F_k\setminus C_N$ inside the 
standard symplectic $4$-ball for each of the intersection points of $F_k$ with $C_N$. 
If $F_l$ is another symplectic surface intersecting $C_N$, then the descendant $\tilde{F_l}$ of $F_l$ in $X_{N-1}$ will intersect with 
$\tilde{F_k}$ at the origin of the standard symplectic $4$-ball, which is the only new intersection point introduced to $F_k, F_l$ under the blowing down operation along $C_N$. We denote the origin of the standard symplectic $4$-ball by $\hat{E}_N\in X_{N-1}$. 
Note that under this construction, $\tilde{F_k}$ is immersed in general, where the (transverse) self-intersection at $\hat{E}_N$ is introduced
if $F_k$ intersects $C_N$ at more than one point. Finally, we denote by $B(\hat{E}_N)$ a small $4$-ball centered at $\hat{E}_N$
such that $B(\hat{E}_N)\cap (\cup_k \tilde{F_k})$ consists of a union of (linear) complex discs through the origin. Note that for each $k$, the number of complex discs in $B(\hat{E}_N)\cap \tilde{F_k}$ equals the intersection number $E_N\cdot F_k$.

To continue with the successive blowing-down procedure, we consider the union of the generally immersed symplectic surfaces $D_{N-1}:=\cup_k \tilde{F_k}$ in $(X_{N-1},\omega_{N-1})$. For simplicity, we shall continue to denote the descendant $\tilde{F_k}$ by the original notation $F_k$.  However, one should note that the initial condition (\dag) concerning the intersections of the components $F_k$ of $D_N$ is replaced by the following condition:
\begin{itemize}
\item [{(\ddag)}] There exists an $\omega_{N-1}$-compatible almost complex structure 
$J_{N-1}$ such that each component $F_k$ in $D_{N-1}$ is $J_{N-1}$-holomorphic, 
self-intersecting and intersecting with each other transversely. Moreover, $J_{N-1}$ is integrable near the intersection points. 
\end{itemize}

We shall continue this process if $N-1\geq 2$. Now suppose we are at the stage of 
$(X_n,\omega_n)$ for some $n<N$, with the descendant of $D_N$ in $X_n$ denoted by $D_n$, 
which is $J_n$-holomorphic with respect to some $\omega_n$-compatible almost complex structure
$J_n$. Suppose $n\geq 2$ and we are trying to blow down the class $E_n$ in the reduced basis
of $(X_n,\omega_n)$, and to define the descendant of $D_n$ under the blowing-down operation. 
To this end, we represent the class $E_n$ by a $J_n$-holomorphic sphere $C_n$. If $C_n$ is not
part of $D_n$, then as we argued in the case of $C_N$, one can slightly perturb $C_n$ to a 
symplectic $(-1)$-sphere, still denoted by $C_n$, such that $C_n$ obeys the general position condition. With this understood, we simply blow down $(X_n,\omega_n)$ along $C_n$ in the same way as we blow down $(X_N,\omega_N)$ along $C_N$, and move on to the next stage $(X_{n-1},\omega_{n-1})$. 

However, if $C_n$ is part of $D_n$, then we can no longer perturb $C_n$ before blowing it down, in order to make the successive blowing-down procedure reversible. In the easy situation where $C_n$
is one of the original symplectic surfaces in $D_N$, we can simply blow it down without perturbing it.
In general, $C_n$ is the descendant of a symplectic sphere $S\subset D_N$ to $X_n$, 
where the $a$-coefficient of $S$ is zero and the class $E_n$ appears in $S$ as the leading class, i.e., $S$ has the homological expression 
$$
S=E_n-E_{l_1}-\cdots-E_{l_\alpha}, \mbox{ where $n<l_s$ for all $s$.}
$$
In this case, more care needs to be given in defining the descendant $D_{n-1}$ of $D_n$ in the next stage $(X_{n-1},\omega_{n-1})$. 

\subsubsection{Tangency of higher orders and singularities}
When $C_n$ is part of $D_n$, intersection of higher order tangency as well as singularities 
may occur in $D_{n-1}$. In order to construct $D_{n-1}$, we need the following technical lemma.

\begin{lemma}
Let $(M,\omega)$ be a symplectic $4$-manifold and $C$ be a symplectic $(-1)$-sphere in 
$(M,\omega)$. Let $(M^\prime,\omega^\prime)$ be the symplectic blow-down of $(M,\omega)$ along $C$, obtained by removing 
$C$ and gluing back a standard symplectic $4$-ball {\em(}with an appropriate size depending on the area of $C${\em)}. Note that the set of points on $C$ corresponds naturally to the set of complex lines through 
the origin in the standard symplectic $4$-ball in $(M^\prime,\omega^\prime)$. With this understood, the following statements hold.
\begin{itemize}
\item [{(1)}] Let $S_0,S_1,\cdots,S_k$ be symplectic surfaces in $(M,\omega)$, which intersect $C$ at a point $p$.
Moreover, suppose there is a complex coordinate system $(w_1,w_2)$ centered at $p$ in which the symplectic structure
$\omega$ is standard, such that $C$ is defined by $w_2=0$, $S_0$ is defined by $w_1=0$, and each $S_i$, $i>0$, is 
defined by the complex line $w_2=a_iw_1$ for some distinct complex numbers $a_i\neq 0$. Then the 
descendant $S_i^\prime$
of $S_i$ in the blow-down $(M^\prime,\omega^\prime)$ can be defined as follows: let $(z_1,z_2)$ be the complex coordinates 
of the standard symplectic $4$-ball in $(M^\prime,\omega^\prime)$, such that the complex line corresponding to the intersection 
point $p\in C$ is given by $z_1=0$, then $S_0^\prime$ is obtained by gluing a complex disc to $S_0\setminus C$ contained in
$z_1=0$, and for each $i>0$, $S_i^\prime$ is obtained by gluing a holomorphic disc to $S_i\setminus C$ defined by the
equation $z_1=b_i z_2^2$ for some distinct complex numbers $b_i\neq 0$. 
\item [{(2)}] Let $S$ be a symplectic surface intersecting $C$ at $p$, such that there is a Darboux complex coordinate system 
$(w_1,w_2)$ centered at $p$, in which $C$ and $S$ are given by $w_2=0$ and $w_2^n=aw_1^m$ for some relative prime 
integers $m,n>0$ and a complex number $a\neq 0$. Then the descendant $S^\prime$
of $S$ in the blow-down $(M^\prime,\omega^\prime)$ can be defined as follows: let $(z_1,z_2)$ be the complex coordinates 
of the standard symplectic $4$-ball in $(M^\prime,\omega^\prime)$, such that the complex line corresponding to the intersection 
point $p\in C$ is given by $z_1=0$, then $S^\prime$ is obtained by gluing a holomorphic disc to 
$S\setminus C$ defined by the equation $z_1^m=bz_2^{m+n}$ for some complex number 
$b\neq 0$, which is explicitly determined by $a$, $m$ and $n$.
\end{itemize}
\end{lemma}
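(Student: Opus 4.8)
The plan is to reduce both parts to an explicit computation in the standard \emph{holomorphic} local model of the symplectic blow-down, exploiting the hypothesis that near $p$ all of the data are holomorphic in a Darboux chart where $\omega$ is the standard Kähler form. First I would recall that a Weinstein neighborhood of the symplectic $(-1)$-sphere $C$ is symplectomorphic to a neighborhood of the zero section in the disc bundle of $\mathcal{O}(-1)\to\C\P^1$, and that in this model the symplectic blow-down agrees with the holomorphic contraction: the complement $M\setminus C$ is identified, near $C$, with the punctured standard $4$-ball $B\setminus\{0\}$ via the holomorphic blow-down map $\pi$, a biholomorphism away from $C$ that collapses $C$ to the origin, under which a point of $C$ corresponds to the complex line through the origin in its limiting image direction, exactly as in the statement. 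With this understood, the descendant $S'$ of a surface $S$ is simply the closure $\overline{\pi(S\setminus C)}$; since $\pi(S\setminus C)$ is a holomorphic curve in $B\setminus\{0\}$ and the standard complex structure on $B$ is $\omega'$-compatible, $S'$ is automatically symplectic, so the only task is to identify its defining equation.

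Next I would pin down coordinates. Taking $(w_1,w_2)$ to be the given Darboux coordinates near $p$ with $C=\{w_2=0\}$, and choosing the coordinates $(z_1,z_2)$ on $B$ (with the line corresponding to $p$ equal to $\{z_1=0\}$) appropriately, the blow-down map takes the normal form
\[
\pi(w_1,w_2)=(w_1 w_2,\; w_2),
\]
with $p=(0,0)$. Indeed $\{w_2=0\}$ collapses to the origin, and approaching $(w_1,0)\in C$ along $w_2\to 0$ gives image direction $w_2(w_1,1)\to(w_1,1)$, i.e.\ the line $\{z_1=w_1 z_2\}$, which for $w_1=0$ is $\{z_1=0\}$, recovering the stated correspondence. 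Verifying that this holomorphic $\pi$ genuinely computes the symplectic blow-down of the given configuration is the one substantive point; everything after it is bookkeeping.

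For part (1) I would parametrize each surface and push it forward by $\pi$. The surface $S_0=\{w_1=0\}$ maps to $\{z_1=0\}$, giving $S_0'=\{z_1=0\}$. For $i>0$, parametrizing $S_i=\{w_2=a_iw_1\}$ by $w_1=s$, $w_2=a_i s$ yields $z_1=a_i s^2$, $z_2=a_i s$, and eliminating $s=z_2/a_i$ gives $z_1=b_i z_2^2$ with $b_i=1/a_i$; distinctness of the $a_i$ forces distinctness of the $b_i$, and each such disc is tangent to $\{z_1=0\}$ to order $2$. For part (2) I would parametrize the (possibly singular) curve $S=\{w_2^n=a w_1^m\}$ by $w_1=t^n$, $w_2=a^{1/n}t^m$, so that $z_1=a^{1/n}t^{m+n}$ and $z_2=a^{1/n}t^m$; then $t^n=z_1/z_2$ and $z_2^n=a\,t^{mn}=a(z_1/z_2)^m$ rearrange to $z_1^m=b\,z_2^{m+n}$ with $b=1/a$, independent of the chosen $n$-th root. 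This is precisely the asserted equation, and since $\gcd(m,m+n)=\gcd(m,n)=1$ the link of the singularity is the $(m,m+n)$-torus knot.

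I expect the main obstacle to be the first paragraph rather than the computations: namely, justifying rigorously that, in the region where $\omega$ is standard and the surfaces are holomorphic, the purely symplectic cut-and-insert operation of the blow-down coincides with the holomorphic contraction $\pi$, so that the descendants produced by gluing complex linear discs in the general construction agree with $\overline{\pi(S\setminus C)}$. Once this compatibility is secured, the coordinate computations above are routine and yield both formulas.
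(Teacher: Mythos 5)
Your proposal contains a genuine gap, and it is exactly the step you yourself flag as ``the one substantive point'': the claim that the symplectic cut-and-insert blow-down coincides, near $C$, with the holomorphic contraction $\pi(w_1,w_2)=(w_1w_2,w_2)$. This claim is false as stated, not merely unproven. In the symplectic blow-down, a neighborhood of $C$ is identified with the model $W(\delta_1)$ (the shell $B^4(\delta_1)\setminus B^4(\delta_0)$ with the Hopf fibers on the inner boundary collapsed, where $\pi\delta_0^2=\omega(C)$), and the operation glues the standard ball $B^4(\delta_0)$ into the inner boundary. Hence $M\setminus C$ near $C$ is symplectomorphic to the \emph{shell} $\delta_0<|z|<\delta_1$ with the standard form, not to a punctured ball: there is a radial shift $|z|^2\mapsto |z|^2-\delta_0^2$ built into the correspondence. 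Equivalently, the pushforward of $\omega$ under the holomorphic contraction $\pi$ is the K\"ahler form of the blow-up, which is not standard on any punctured neighborhood of the origin, so $\pi$ is never a symplectomorphism onto a subset of the standard ball. The paper's proof consists precisely of constructing the correct (non-holomorphic) identification: in coordinates it is $(w_1,w_2)=(\lambda e^{i\theta},\delta e^{i\phi})$ with $\lambda=r\sqrt{\delta^2+\delta_0^2}/\sqrt{1+r^2}$, where $(r,\theta,\phi)$ trivialize the Hopf fibration and $\delta^2=|z|^2-\delta_0^2$.

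This has two concrete consequences that break the rest of your argument. First, under the correct identification the trace of $S_i\setminus C$ does not close up through the origin: as one approaches $p$ along $S_i$, the image tends to a circle on the sphere $|z|=\delta_0$, so the descendant cannot be defined as $\overline{\pi(S_i\setminus C)}$; one must truncate $S_i$ at a small parameter $t_0$, glue in a holomorphic disc, and smooth the resulting corner --- this truncate-glue-smooth step is the actual construction the lemma asserts exists. Second, the image of the holomorphic line $w_2=a_iw_1$ under the correct symplectomorphism is \emph{not} a holomorphic curve in the ball coordinates: the paper's computation shows it sweeps through the family $z_1=b_i(t)z_2^2$ with $b_i(t)=te^{-i\kappa_i}/(\delta_0^2+(\rho_i^2-1)t^2)$ varying along the surface (here $a_i=\rho_ie^{i\kappa_i}$), so the glued disc has coefficient $b_i(t_0)$ depending on $t_0$ and $\delta_0$, not your value $1/a_i$. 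Your coordinate computations are internally correct as statements about the holomorphic model, and they do predict the right qualitative normal forms ($z_1=b_iz_2^2$ and $z_1^m=bz_2^{m+n}$, with the torus-knot link), which is why the final statement of the lemma looks the same; but repairing the bridge between the holomorphic model and the symplectic operation is not bookkeeping --- it is the entire proof, and it forces the non-holomorphic radial-shift symplectomorphism and the gluing construction that the paper carries out.
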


\begin{proof}
Let the symplectic area of $C$ be $\omega(C)=\pi\delta_0^2$ for some $\delta_0>0$. Then by
the Weinstein neighborhood theorem, a neighborhood of $C$ in $(M,\omega)$ has a standard
model which we describe below. 

Let $(z_1,z_2)$ be the coordinates of $\C^2$ such that the standard symplectic structure $\omega_0$ is given by $\omega_0=\frac{i}{2}(dz_1\wedge d\bar{z}_1+dz_2\wedge d\bar{z}_2)$.
Let $B^4(\delta)=\{(z_1,z_2)||z_1|^2+|z_2|^2<\delta^2\}$ denote the open ball of radius $\delta>0$
in $\C^2$, and for any $\delta_1>\delta_0$, let $W(\delta_1)$ be the symplectic $4$-manifold which is obtained by collapsing the fibers of the Hopf fibration on the boundary of $B^4(\delta_1)\setminus B^4(\delta_0)$. Then a neighborhood of $C$ in $(M,\omega)$ is symplectomorphic to $W(\delta_1)$
for some $\delta_1$ where $\delta_1-\delta_0$ is sufficiently small. With this understood, the 
symplectic blow-down $(M^\prime,\omega^\prime)$ is obtained by cutting $(M,\omega)$ open
along $C$ and gluing in the standard symplectic $4$-ball $B^4(\delta_0)$ after fixing an
identification of a neighborhood of $C$ with $W(\delta_1)$. In the present situation, in order to
extend the symplectic surfaces $S_i\setminus C$ or $S\setminus C$ across the $4$-ball $B^4(\delta_0)$, we need to choose the identification of a neighborhood of $C$ with $W(\delta_1)$
more carefully. 

To this end, we consider the following reparametrization of a neighborhood of the circle 
$\{z_1=0\}\cap \s^3(\delta_0)$ in $\C^2$, where $\s^3(\delta_0)$ is the sphere of radius $\delta_0$,
by the map 
$$
(z_1,z_2)=(\frac{r\delta}{\sqrt{1+r^2}} e^{i(\theta+\phi)}, \frac{\delta}{\sqrt{1+r^2}} 
e^{i\phi}),
$$
for $0\leq r<r_0$, $\theta,\phi\in \R/2\pi \Z$, and $\delta$ lying in a small interval containing 
$\delta_0$. We note that $(r,\theta,\phi)$ gives a trivialization of the Hopf fibration near 
$z_1=0$ in $\s^3(\delta_0)$, with $(r,\theta)$ for the base and $\phi$ for the fiber. In the new
coordinates $(r,\theta,\delta,\phi)$, the standard symplectic structure on $\C^2$ takes the form
$$
\omega_0=\frac{r^2\delta}{1+r^2} d\delta\wedge d\theta+\frac{\delta^2r}{(1+r^2)^2}dr\wedge 
d\theta+\delta d\delta \wedge d\phi.
$$
Replacing $\delta^2$ by $\delta^2+\delta_0^2$ and assuming 
$0\leq \delta<\sqrt{\delta_1^2-\delta_0^2}$,
we obtain a description of the symplectic structure on $W(\delta_1)$ in a 
neighborhood of the image of $\{z_1=0\}\cap \s^3(\delta_0)$ in  $W(\delta_1)$
(where the image of $\{z_1=0\}\cap \s^3(\delta_0)$ has coordinates $\lambda=\delta=0$):
$$
\omega_0=\lambda d\lambda \wedge d\theta+\delta d\delta \wedge d\phi,
\mbox{ where } \lambda=\frac{r\sqrt{\delta^2+\delta_0^2}}{\sqrt{1+r^2}}.
$$
With this understood, the map $(w_1,w_2)=(\lambda e^{i\theta},\delta e^{i\phi})$ is a
symplectomorphism which identifies a neighborhood of the image of $\{z_1=0\}\cap \s^3(\delta_0)$ in  $W(\delta_1)$ 
with a neighborhood of $p\in C$ in $(M,\omega)$. Then by the relative version
of the Weinstein neighborhood theorem, we may extend this symplectomorphism to a 
symplectomorphism which identifies $W(\delta_1)$ with a neighborhood of $C$ in $(M,\omega)$.

With the preceding understood, we now consider case (1) of the lemma. First, note that the symplectic surface $S_0$ is given by
$w_1=0$ near the point $p$. Hence under the symplectomorphism $(w_1,w_2)=(\lambda e^{i\theta},\delta e^{i\phi})$ where 
$\lambda=\frac{r\sqrt{\delta^2+\delta_0^2}}{\sqrt{1+r^2}}$, the part of $S_0$ near $p$ as a symplectic surface in $W(\delta_1)$ 
is given by the equation $r=0$ in the coordinate system $(r,\theta,\delta,\phi)$, which implies that, as a symplectic surface in 
$\C^2$, it is given by the equation $z_1=0$. It follows immediately that one can extend $S_0\setminus C$ across the standard 
symplectic $4$-ball in $(M^\prime,\omega^\prime)$ by gluing in a complex disc contained in the complex line $z_1=0$. 
This is the descendant $S_0^\prime$ of $S_0$ in $(M^\prime,\omega^\prime)$. 

For each $i>0$, $S_i$ is given by the complex line $w_2=a_i w_1$ near the point $p$. Writing $a_i=\rho_i e^{i\kappa_i}$, we
parametrize $S_i$ near $p$ by the equations $w_1=te^{is}$ and $w_2=t\rho_i e^{i(s+\kappa_i)}$. Under the 
symplectomorphism $(w_1,w_2)=(\lambda e^{i\theta},\delta e^{i\phi})$ where $\lambda=\frac{r\sqrt{\delta^2+\delta_0^2}}{\sqrt{1+r^2}}$,
it is parametrized in the $(r,\theta,\delta,\phi)$ coordinate system by the following equations:
$$
r=\frac{t}{\sqrt{\delta_0^2+(\rho_i^2-1)t^2}},\; \theta=s,\; \delta=t\rho_i,\; \phi=s+\kappa_i.
$$
Now reviewing the part of $S_i$ near $p$ as a subset in $\C^2$, it is parametrized in the coordinates $(z_1,z_2)$ by the following
equations (recall we have replaced $\delta^2$ by $\delta_0^2+\delta^2$):
$$
z_1=\frac{r\sqrt{\delta_0^2+\delta^2}}{\sqrt{1+r^2}}e^{i(\theta+\phi)}=te^{i(2s+\kappa_i)}, \;\;
z_2=\frac{\sqrt{\delta_0^2+\delta^2}}{\sqrt{1+r^2}}e^{i\phi}=\sqrt{\delta_0^2+(\rho_i^2-1)t^2} 
\cdot e^{i(s+\kappa_i)}. 
$$
With this understood, we observe that $z_1,z_2$ satisfy the equation $z_1=b_i z_2^2$, where
$$
b_i=\frac{te^{-i\kappa_i}}{\delta_0^2+(\rho_i^2-1)t^2},
$$
for any $t>0$ which is sufficiently small. It is clear that $b_i\neq 0$ for each $i>0$, and that $\{a_i\}$ being distinct implies 
that $\{b_i\}$ are also distinct (for each fixed $t$). Now we fix a value $t_0>0$ which is sufficiently small, and 
remove the part $\{t\leq t_0\}$ from
$S_i$ and glue onto it the holomorphic disc defined by the equation $z_1=b_i z_2^2$, where
$$
b_i=\frac{t_0e^{-i\kappa_i}}{\delta_0^2+(\rho_i^2-1)t_0^2}.
$$
For $t_0$ small, one can smooth off the corners near the gluing region to obtain a symplectic surface in $(M^\prime,\omega^\prime)$,
which is defined to be the descendant $S_i^\prime$ of $S_i$ in the symplectic blow-down. This finishes the proof for case (1).

The argument for case (2) is similar. The surface $S$ near $p$ is given by the equation $w_2^n=aw_1^m$. Writing 
$a=\rho e^{i\kappa}$, we parametrize $S$ near $p$ by the equations 
$$
w_1=t^ne^{ins} \mbox{ and } w_2=t^m\rho^{\frac{1}{n}} e^{i(ms+\frac{\kappa}{n})}.
$$
Under the symplectomorphism $(w_1,w_2)=(\lambda e^{i\theta},\delta e^{i\phi})$ where 
$\lambda=\frac{r\sqrt{\delta^2+\delta_0^2}}{\sqrt{1+r^2}}$, it is parametrized in the $(r,\theta,\delta,\phi)$ coordinate system 
by the following equations:
$$
r=\frac{t^n}{\sqrt{\delta_0^2+\rho^{2/n} t^{2m}-t^{2n}}},\; \theta=ns,\; \delta=\rho^{1/n}t^m,\; \phi=ms+\kappa/n.
$$
In the coordinates $(z_1,z_2)$ on $\C^2$, the part of $S$ near $p$ is parametrized by the following equations: 
$$
z_1=t^n e^{i\kappa/n}\cdot e^{i(m+n)s}, \;\; z_2=\sqrt{\delta_0^2+\rho^{2/n} t^{2m}-t^{2n}} \cdot e^{i\kappa/n} e^{ims}.
$$
It follows easily that $z_1,z_2$ satisfy the equation $z_1^m=b z_2^{m+n}$, where 
$$
b=\frac{t^{mn} e^{-i\kappa}}{(\delta_0^2+\rho^{2/n} t^{2m}-t^{2n})^{\frac{m+n}{2}}} 
$$
for any $t>0$ which is sufficiently small. Clearly, $b\neq 0$. As in case (1), we fix a value $t_0>0$ sufficiently small, remove
the part $\{t\leq t_0\}$ from the surface $S$ and glue onto it the holomorphic disc (singular in this case) defined by the equation
$z_1^m=bz_2^{m+n}$, where in $b$ the variable $t$ is evaluated at $t_0$. The resulting surface (after smoothing off the corners)
is the descendant $S^\prime$ of $S$ in the symplectic blow-down $(M^\prime,\omega^\prime)$. This finishes the proof for case (2),
and the proof of the lemma is complete. 

\end{proof}

With Lemma 4.4 at hand, we shall define the descendant $D_{n-1}$ of $D_n$ in the next stage 
$(X_{n-1},\omega_{n-1})$ as follows.
First, since $n<l_s$ for each $s$, the classes $E_{l_s}$ all have been blown down in the earlier stages. We assume that for each $s$,
the class $E_{l_s}$ does not appear in any of the components of $D_N$ as the leading class
(i.e., this is the first time we cannot perturb the $(-1)$-sphere to a general position). 
With this understood, for each $s$, there is a point $\hat{E}_{l_s}$ and a small, standard symplectic $4$-ball $B(\hat{E}_{l_s})\subset X_n$ centered at $\hat{E}_{l_s}$, such that $\hat{E}_{l_s}\in C_n$ for each $s$, and the intersection $B(\hat{E}_{l_s})\cap C_n$ is a disc lying in a complex line (called a complex linear disc). 

\vspace{2mm}

{\bf Case (1):} Suppose the class $E_n$ does not appear in any of the components of $D_N$ which has zero $a$-coefficient. In this case, we can simply blow down $(X_n,\omega_n)$ 
along $C_n$ to the next stage $(X_{n-1},\omega_{n-1})$, which means that we will cut $X_n$ open along $C_n$ and then insert a standard symplectic $4$-ball of appropriate size. For any component $F_k$ in $D_n$ which intersects with $C_n$, there are two possibilities.
If an intersection point of $F_k$ with $C_n$ is inherited from the original intersection in $D_N$, then by the condition (\dag), there is no other component $F_l$ passing through this intersection point. For such an intersection point on $C_n$, we shall simply glue a disc to $F_k\setminus C_n$
which is lying on a complex line in the standard symplectic $4$-ball. Any other intersection point of $F_k$ with $C_n$ should occur at one of the points
$\hat{E}_{l_s}$. For any such intersection points, we shall define the descendant of $F_k$ in $X_{n-1}$ by extending the surface 
$F_k\setminus C_n$ across the standard symplectic $4$-ball according to Lemma 4.4(1). With this understood, we denote the center of the
standard symplectic $4$-ball by $\hat{E}_n$. Then it is easy to see that there is a small $4$-ball $B(\hat{E}_n)$ centered at $\hat{E}_n$,
such that each original intersection point on $C_n$ from $D_N$ determines a linear complex disc in $B(\hat{E}_n)$ as part of the descendant 
$D_{n-1}$, and each point $\hat{E}_{l_s}\in C_n$ determines a complex line in $B(\hat{E}_n)$ with the property that each linear complex disc in $B(\hat{E}_{l_s})\cap D_n$ which is not part of $C_n$ determines a holomorphic disc in $B(\hat{E}_n)$ as part of the descendant $D_{n-1}$, which has tangency of order $2$ with the complex line determined by the point $\hat{E}_{l_s}$. Finally, we remark that after shrinking the size, the $4$-ball $B(\hat{E}_n)$, particularly
the point $\hat{E}_n$, will survive to the last stage of the successive blowing-down. 
Note that $E_n\in\E_0(D_N)$, but $\forall s$, $E_{l_s}$ does not belong to $\E_0(D_N)$.

\vspace{2mm}

{\bf Case (2):} If the class $E_n$ appears in the expression of a symplectic sphere in $D_N$ whose $a$-coefficient is zero (note that in this case, $E_n$ is not an element of $\E_0(D_N)$), then more care is needed in defining the descendant $D_{n-1}$. And here is the reason: suppose $E_n$ is contained in $S_1$ whose $a$-coefficient is zero, and let $E_m$ be the leading class in $S_1$. Then $m<n$, and in a later stage of $(X_m,\omega_m)$ when we blow down the class $E_m$, we will be again in a situation where we cannot perturb the $(-1)$-sphere $C_m$ to a general position (because $C_m$ is the descendant of $S_1$ in $D_m$, so is part of $D_m$). In particular, we will have to apply Lemma 4.4 when blowing down the class $E_m$. With this understood, observe that in Lemma 4.4, near the point $p\in C$, the symplectic surfaces under consideration have to be in certain standard forms with respect to a complex coordinate system $(w_1,w_2)$ with standard symplectic structure, and in particular, the $(-1)$-sphere $C$ has to be given by a complex coordinate line $w_2=0$. 
This requires that, when we blow down the $(-1)$-sphere $C_n$, we need to arrange so that in the small $4$-ball $B(\hat{E}_n)\subset X_{n-1}$, the holomorphic discs $B(\hat{E}_n)\cap D_{n-1}$
can be placed in the model required in Lemma 4.4.

\vspace{2mm}

With this understood, we first make the following observation. 

\begin{lemma}
There are at most two components $F_k$ in $D_N$ such that 
$\left(1\right)$ the $a$-coefficient of $F_k$ is zero, 
$\left(2\right)$ the homological expression of $F_k$ contains the class $E_n$. Moreover, such a component $F_k$ can contain at most one of the classes $E_{l_s}$ in its homological expression, and the 
classes $E_{l_s}$ contained in two distinct such components $F_k$ must be distinct. 
{\em(}Recall $S=E_n-E_{l_1}-\cdots-E_{l_\alpha}$ is the symplectic sphere in $D_N$ that is under consideration.{\em)}
\end{lemma}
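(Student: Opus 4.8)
The plan is to prove all three assertions of the lemma by a single purely homological computation, using only condition (\dag) and the known shape of a zero-$a$-coefficient component. Recall from \cite{C}, Lemma 3.3 that such a component $F$ is a symplectic sphere whose homological expression has the form $F=E_p-\sum_{i\in I_F}E_i$, where $E_p$ is the leading class and $p<i$ for every $i\in I_F$. Since $F$ is assumed to contain $E_n$, there are exactly two possibilities: either $p=n$ (so $E_n$ is the leading class of $F$), or $n\in I_F$ (so $E_n$ occurs as a subtracted class, forcing $p<n$). The only tools needed are the standard intersection form $E_i\cdot E_j=-\delta_{ij}$, $H\cdot E_i=0$, under which two classes $F=\sum_ic_iE_i$, $F'=\sum_ic_i'E_i$ of zero $a$-coefficient pair as $F\cdot F'=-\sum_ic_ic_i'$, together with the constraint supplied by (\dag) that any two distinct components of $D_N$ have intersection number in $\{0,1\}$, in particular nonnegative.

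First I would show that $S$ is the unique zero-$a$-coefficient component whose leading class is $E_n$, so that every other component counted in the lemma contains $E_n$ only as a subtracted class. Indeed, if $F\neq S$ also had $E_n$ as its leading class, then in $F\cdot S$ the common leading class $E_n$ and every common subtracted index each contribute $-1$, giving $F\cdot S=-1-|I_F\cap\{l_s\}|\le-1<0$, which violates (\dag). Next, for a component $F\neq S$ with $n\in I_F$ and leading class $E_p$, $p<n$, the same bookkeeping gives $F\cdot S=1-|I_F\cap\{l_s\}|$: here the subtracted index $n$ of $F$ meets the leading class $E_n$ of $S$ and contributes $+1$ to $F\cdot S$, while each class $E_{l_s}$ common to $F$ and $S$ contributes $-1$. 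Nonnegativity of $F\cdot S$ then forces $|I_F\cap\{l_s\}|\le1$, i.e. $F$ contains at most one of the $E_{l_s}$, which is part (2).

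Finally I would treat the counting and the distinctness of the $E_{l_s}$ together. Given two distinct such components $F,F'$ with $E_n$ subtracted and leading classes $E_p,E_{p'}$ ($p,p'<n$), one checks first that $p\neq p'$, since a shared leading class together with the shared subtracted index $E_n$ already forces $F\cdot F'\le-2$. Assuming $p<p'$ and setting $t:=|I_F\cap I_{F'}\setminus\{n\}|$, the product evaluates to $F\cdot F'=-1+[\,p'\in I_F\,]-t$; nonnegativity then forces $p'\in I_F$ and $t=0$, whence $F\cdot F'=0$ and $I_F\cap I_{F'}=\{n\}$. Part (3) is immediate: a class $E_{l_s}$ common to $F$ and $F'$ would lie in $I_F\cap I_{F'}=\{n\}$, impossible since $l_s>n$. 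For the bound of two, three such components $F_1,F_2,F_3$ with $p_1<p_2<p_3$ would, by the pairwise analysis applied to $(F_1,F_3)$ and $(F_2,F_3)$, satisfy $p_3\in I_{F_1}$ and $p_3\in I_{F_2}$, hence $p_3\in I_{F_1}\cap I_{F_2}=\{n\}$, contradicting $p_3<n$. The one delicate point, and the step I would be most careful about, is the sign bookkeeping in these intersection products: the entire argument is driven by the fact that a shared leading class, or a subtracted index shared by two components, always contributes $-1$ to the intersection number, and it is precisely these forced negative contributions, clashing with (\dag), that produce each of the three conclusions. There is no genuine geometric obstacle beyond setting up this combinatorics correctly.
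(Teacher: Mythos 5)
Your proof is correct and follows essentially the same route as the paper's: both arguments rest on the standard form $E_p-\sum_{i\in I}E_i$ of zero-$a$-coefficient spheres (with the leading class of smallest index) together with the nonnegativity of pairwise intersection numbers forced by condition (\dag), deriving each conclusion from the forced $-1$ contributions of shared leading or shared subtracted classes. The only difference is one of explicitness: you spell out the sign bookkeeping and explicitly rule out $E_n$ being the leading class of a second component via $F\cdot S<0$, a point the paper leaves implicit in its assertion that $j_1<n$.
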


\begin{proof}
Suppose $S_1$ is such a component in $D_N$, i.e, the $a$-coefficient of $S_1$ is zero and the homological expression of $S_1$ contains the class $E_n$. Let $E_{j_1}$ be the leading class in $S_1$. Then the fact that $E_n$ is contained in $S_1$ implies that $j_1<n$ must be true. 
On the other hand, $S\cdot S_1\geq 0$ implies that 
$S\cdot S_1$, in fact, equals either $0$ or $1$. In the former case, $S_1$ contains exactly one of the classes $E_{l_s}$, and in the latter case, $S_1$ contains none of the classes $E_{l_s}$. 

Suppose $S_2$ is another such component in $D_N$, with $E_{j_2}$ being the leading class in $S_2$. Without loss of generality, we assume $j_2<j_1$. Then since $S_1,S_2$ both contain the class $E_n$, it follows easily from $S_1\cdot S_2\geq 0$ that $E_{j_1}$ must appear in the expression of $S_2$, the intersection $S_1\cdot S_2=0$, and the classes $E_{l_s}$ which are contained in $S_1,S_2$ must be distinct. With this understood, suppose to the contrary that there are more than two such components, and let $S_3$ be a third such component. Then the same argument as in the case of $S_2$ implies that the expression of $S_3$ must contain both $E_{j_1}$ and $E_n$. But then this would imply $S_2\cdot S_3<0$, which is a contradiction. The lemma follows easily from these considerations. 

\end{proof}

We shall consider separately according to the number of the symplectic spheres 
described in Lemma 4.5. 

\vspace{2mm}

{\bf Case (a):} Suppose there are two symplectic spheres $S_1,S_2\subset D_N$ with zero 
$a$-coefficient whose homological expressions contain the class $E_n$ in $S$. We shall need to make some very specific identification of a neighborhood of $C_n$ in $(X_n,\omega_n)$ with the standard model, which is described below. Assume $\omega_n(C_n)=\pi\delta_0$. 

Fix a coordinate system $(z_1,z_2)$ of $\C^2$ such that the standard symplectic structure 
$\omega_0$ on $\C^2$ is given by $\omega_0=\frac{i}{2}(dz_1\wedge d\bar{z}_1+dz_2\wedge d\bar{z}_2)$. Let $B^4(\delta)=\{(z_1,z_2)||z_1|^2+|z_2|^2<\delta^2\}$ denote the open ball of 
radius $\delta>0$ in $\C^2$, and for any $\delta_1>\delta_0$, let $W(\delta_1)$ be the symplectic 
$4$-manifold which is obtained by collapsing the fibers of the Hopf fibration on the boundary of $B^4(\delta_1)\setminus B^4(\delta_0)$. Then by the Weinstein neighborhood theorem, a neighborhood of $C_n$ in $(X_n,\omega_n)$ is symplectomorphic to $W(\delta_1)$
for some $\delta_1$ where $\delta_1-\delta_0$ is sufficiently small. With this understood, the 
symplectic blow-down $(X_{n-1},\omega_{n-1})$ is obtained by cutting $(X_n,\omega_n)$ open along $C_n$ and gluing in the standard symplectic $4$-ball $B^4(\delta_0)$ after fixing an
identification of a neighborhood of $C_n$ with $W(\delta_1)$. 

With the preceding understood, let $p_1,p_2$ be the intersection points of the descendants of $S_1,S_2$ in $D_n$ with $C_n$. Then by a relative version of the Weinstein neighborhood theorem,
we can choose an identification of a neighborhood of $C_n$ with $W(\delta_1)$ such that $p_1$ and 
$p_2$ are identified with the images of the Hopf fibers at $z_1=0$ and $z_2=0$ respectively. 
With this understood, when we apply Lemma 4.4 to the points $p_1,p_2$, we can furthermore arrange
the descendants of $S_1,S_2$ in $D_n$ to be the symplectic surface $S_0$ in Lemma 4.4, so that after applying Lemma 4.4, the descendants of $S_1,S_2$ in $D_{n-1}\cap B^4(\delta_0)$ are given by
the complex lines $z_1=0$ and $z_2=0$ respectively. Moreover, any other component of $D_n$ which intersects $C_n$ at either $p_1$ or $p_2$ will have its descendant in $D_{n-1}$ given by a holomorphic disc in $B^4(\delta_0)$ of the form $z_1=bz_2^2$ or $z_2=bz_1^2$ respectively (more generally, of the form $z_1^m=b z_2^{m+n}$ if before blowing down it is given by $w_2^n=aw_1^m$,
etc. ). It remains to deal with the intersection points $\hat{E}_{l_s}\in C_n$ which are not $p_1,p_2$.
By the assumption (a) in Theorem 4.3, for any such an $\hat{E}_{l_s}$, there is only one component in $D_n$ which intersects $C_n$ at $\hat{E}_{l_s}$, with intersection number $+1$. (Equivalently, there is only one holomorphic disc in the small $4$-ball $B(\hat{E}_{l_s})$ which does not lie in $C_n$.) 
By a small perturbation, we can arrange this component to coincide with the fiber at 
$\hat{E}_{l_s}\in C_n$ in $W(\delta_1)$,  so that it can be extended across the $4$-ball $B^4(\delta_0)$ by a linear complex disc (given by equation $z_2=az_1$) when we blow down 
$C_n$. In summary, the holomorphic discs $B^4(\delta_0)\cap D_{n-1}$ can be placed in a model that is required in Lemma 4.4 before the blowing down, so that in a later stage, 
when we blow down the $(-1)$-sphere which is the descendant of $S_1$ or $S_2$,
Lemma 4.4 can be applied in the process.

\vspace{2mm}

{\bf Case (b):} Suppose there is only one symplectic sphere $S_1\subset D_N$ with zero 
$a$-coefficient whose homological expression contains the class $E_n$ in $S$. Let $p_1$ be
the intersection point of $C_n$ with the descendant of $S_1$ in $D_n$. Then by the assumption (b) in Theorem 4.3, there is at most one intersection point $\hat{E}_{l_s}\neq p_1$ such that the small
$4$-ball $B(\hat{E}_{l_s})$ contains more than one holomorphic discs which do not lie in $C_n$.
With this understood, we shall choose an identification of a neighborhood of $C_n$ in 
$(X_n,\omega_n)$ with $W(\delta_1)$ such that $p_1$ and 
the intersection point $\hat{E}_{l_s}$ are identified with the images of the Hopf fibers at $z_1=0$ and $z_2=0$ respectively. Then by the same argument as in Case (a), we can arrange such that 
the holomorphic discs $B^4(\delta_0)\cap D_{n-1}$ can be placed in an appropriate model, so that when we blow down the $(-1)$-sphere which is the descendant of $S_1$ in
a later stage, Lemma 4.4 can be applied in the process.

\vspace{2mm}

With the preceding understood, it follows easily that under assumptions (a) and (b), one can continue the process and successively blow down the classes $E_N, E_{N-1}, \cdots, E_2$ to reach to the stage $(X_1,\omega_1)$ (where $X_1=\C\P^2\# \overline{\C\P^2}$), obtaining a canonically 
constructed descendant $D_1$ of $D_N$ in $(X_1,\omega_1)$. We remark that there is an
$\omega_1$-compatible almost complex structure $J_1$, such that $D_1$ is $J_1$-holomorphic. 

It remains to show that if any of the conditions (c), (d), (e) is satisfied, then one can further blow
down the class $E_1$ to reach $\C\P^2$ in the final stage. First, assume (c) is true. In this case,
since $\omega_N(E_1)=\omega_N(E_2)$, the class $E_1$ also has the minimal area in 
$(X_2,\omega_2)$, so that we can represent both $E_1,E_2$ by a $J_2$-holomorphic sphere.
It follows that we can blow down both $(-1)$-classes at the same time.

Next, suppose condition (d) is satisfied. In this case, there is a symplectic sphere $S$ in $D_N$ such
that $E_1$ appears in the expression of $S$ as the leading class. We observe that the descendant of $S$ in $D_1$ is a symplectic $(-1)$-sphere representing the class $E_1$. We simply blow down $(X_1,\omega_1)$ along this $(-1)$-sphere to reach the final stage $\C\P^2$.

Finally, suppose condition (e) is satisfied. In this case, we appeal to Lemma 2.3 of \cite{C2},
which says that either $E_1$ is represented by a $J_1$-holomorphic sphere, or there is a 
$J_1$-holomorphic sphere $C$ such that $E_1=m(H-E_1)+C$ for some $m\geq 1$. 
In the former case, we can blow down the class $E_1$. In the latter case, we reach a contradiction as follows. By condition (e), there is a component $F_k$ of $D_N$ whose $a$-coefficient, $a$, and the
$b_i$-coefficient for $E_1$, $b$, obeys $2b<a$. Let $\hat{F}_k$ denote the descendant of $F_k$
in $D_1$, which is $J_1$-holomorphic and has class $aH-bE_1$. Then we have
$$
0\leq C\cdot \hat{F}_k=(m+1)b-ma,
$$
contradicting the assumption $2b<a$ and the fact $m\geq 1$. The proof of Theorem 4.3 is complete.

\subsection{Examples} 
For the purpose of illustration, we shall apply the successive symplectic blowing down procedure to 
some concrete examples, where $X_N$ is the resolution $\tilde{X}$ of the symplectic 
$4$-orbifold $X$ and $D_N=D$, the pre-image of the singular set of $X$ under the map 
$\tilde{X}\rightarrow X$.

\begin{example}
(1) Consider the case where $X$ has a singular set described in (i) of Theorem 1.2(2), i.e., the singular set consists of $9$ isolated non-Du Val singularities of isotropy of order $3$. In this case, the symplectic configuration $D$ is a disjoint union of $9$ symplectic $(-3)$-spheres, to be denoted by $F_1,F_2, \cdots,F_9$. Note that the canonical class of the resolution $\tilde{X}$ is given by 
$$
c_1(K_{\tilde{X}})=-\frac{1}{3}(F_1+F_2+\cdots+F_9).
$$
It follows immediately that $\tilde{X}=\C\P^2\# 12\overline{\C\P^2}$.

The following is a set of possible homological expressions for $F_1,F_2,\cdots,F_9$:
\begin{itemize}
\item $H-E_{i}-E_{r}-E_{s}-E_t$, $H-E_{i}-E_u-E_v-E_w$, $H-E_{i}-E_x-E_y-E_z$,
\item $H-E_{j}-E_{r}-E_{u}-E_x$, $H-E_{j}-E_s-E_v-E_y$, $H-E_{j}-E_t-E_w-E_z$,
\item $H-E_{k}-E_{r}-E_{v}-E_z$, $H-E_{k}-E_s-E_w-E_x$, $H-E_{k}-E_t-E_u-E_y$.
\end{itemize}
Each class can be represented by a symplectic $(-3)$-sphere, each pair of distinct classes 
has zero intersection number, and the sum of the $9$ classes equals $-3c_1(K_{\tilde{X}})$.
Furthermore, one can arrange so that the symplectic structure on $\tilde{X}$ is odd, 
e.g., when $F_1,F_2, \cdots,F_9$ have the same area (cf. \cite{C}, Lemma 4.1). 

It is easy to see that the assumptions (a) and (b) are satisfied, and also, the condition (e) is satisfied. 
Furthermore, the set $\E_0(D)$ consists of all the $12$ $E_i$-classes. 
Thus by the successive blowing-down procedure, we obtain a symplectic arrangement $\hat{D}$ in
$\C\P^2$, which is a union of $9$ symplectic lines (i.e., a symplectic sphere of degree $1$) intersecting at $12$ points. Note that each line contains $4$ intersection points, each intersection point is contained in $3$ lines, so $\hat{D}$ has an incidence relation which is the same as that of the dual configuration of the famous Hesse configuration (cf. \cite{H}). In particular, $\hat{D}$ can be realized by an arrangement of complex lines.

(2) Consider the case where $X$ has a singular set as in (ii) of Theorem 1.2(2). In this case,
$D$ is a disjoint union of $5$ pairs of a symplectic $(-3)$-sphere and a symplectic $(-2)$-sphere,
denoted by $F_{1,k}, F_{2,k}$ for $k=1,2,\cdots,5$, where each pair of symplectic spheres 
$F_{1,k}$, $F_{2,k}$ intersect transversely and positively in one point. Moreover,
$$
c_1(K_{\tilde{X}})=-\frac{1}{5}\sum_{k=1}^5 (2F_{1,k}+F_{2,k}). 
$$
It follows easily that $\tilde{X}=\C\P^2\# 11\overline{\C\P^2}$.

The following is a set of possible homological expressions for $F_{1,k},F_{2,k}$, $1\leq k\leq 5$:
\begin{itemize}
\item $F_{1,1}=H-E_{i_1}-E_{i_2}-E_{i_3}-E_{i_4}$, $F_{2,1}=H-E_r-E_{i_5}-E_{i_{10}}$,
\item $F_{1,2}=H-E_{i_1}-E_{i_5}-E_{i_6}-E_{i_7}$, $F_{2,2}=H-E_r-E_{i_3}-E_{i_{9}}$,
\item $F_{1,3}=H-E_{i_2}-E_{i_5}-E_{i_8}-E_{i_9}$, $F_{2,3}=H-E_r-E_{i_4}-E_{i_{6}}$,
\item $F_{1,4}=H-E_{i_3}-E_{i_6}-E_{i_8}-E_{i_{10}}$, $F_{2,4}=H-E_r-E_{i_2}-E_{i_7}$,
\item $F_{1,5}=H-E_{i_4}-E_{i_7}-E_{i_9}-E_{i_{10}}$, $F_{2,5}=H-E_r-E_{i_1}-E_{i_{8}}$,
\end{itemize}
where the symplectic structure on $\tilde{X}$ can be arranged so that it is odd, e.g., by requiring 
that the symplectic spheres $F_{1,k}, F_{2,k}$, where $k=1,2,\cdots,5$, have the same area 
(cf. \cite{C}, Lemma 4.1). 
Again, the assumptions (a), (b) and the condition (e) are satisfied, so we can blow down $\tilde{X}$ and transform $D$ to a symplectic arrangement $\hat{D}\subset \C\P^2$. In this case, $\hat{D}$ is also a symplectic line arrangement, consisting of $10$ lines which intersect at $16$ points. There are $5$ original intersection points, i.e., those inherited from $D$, and $11$ new intersection points corresponding to the $11$ $E_i$-classes. The original intersection points are double points, and among the 
$11$ new intersection points, $10$ are triple points and one point is contained in $5$ lines. 
We note that this incidence relation is realized by the real line arrangement $A_1(2m)$ for $m=5$.
(Recall that $A_1(2m)$, for $m\geq 3$, is the arrangement of $2m$ lines in $\R\P^2$, of which $m$ are the lines determined by the edges of a regular $m$-gon in $\R^2$, while the other $m$ are the lines of symmetry of that $m$-gon, cf. \cite{H}.) In particular, $\hat{D}$ can be realized by the complexification of a real line arrangement.
\end{example}

\begin{example}
Here we consider the orbifold $X$ in Example 4.6(1) again, but with the following possible set of
homological expressions for $F_1,F_2,\cdots,F_9$:
\begin{itemize}
\item $F_1=E_u-E_j-E_w$, $F_2=E_y-E_k-E_z$,
\item $F_3=H-E_{i}-E_{r}-E_{s}-E_t$, $F_4=H-E_{i}-E_u-E_v-E_w$,
\item $F_5= H-E_{i}-E_x-E_y-E_z$, $F_6=H-E_{j}-E_{r}-E_{u}-E_x$, 
\item $F_7=H-E_{k}-E_r-E_v-E_y$, $F_8=2H-E_s-E_t-E_u-E_y-E_j-E_v-E_z$, 
\item $F_9=2H-E_s-E_t-E_u-E_y-E_k-E_x-E_w$.
\end{itemize}
Again, the assumptions (a), (b) and the condition (e) are satisfied. In this case, the symplectic
arrangement $\hat{D}$ is a union of $5$ symplectic lines and $2$ symplectic spheres of degree $2$,
consisting of the descendants of $F_k$ for $3\leq k\leq 9$. As for the intersection points, note that 
$\E_0(D)=\{E_s, E_t, E_x, E_v, E_r, E_i, E_u, E_y\}$, so there are totally $8$ intersection points labelled by these classes. Moreover, each of the $6$ intersection points $\hat{E}_s$, $\hat{E}_t$, 
$\hat{E}_x$, 
$\hat{E}_v$, $\hat{E}_r$, $\hat{E}_i$ is a triple point; it is contained in $3$ components in $\hat{D}$
intersecting at it transversely. As for $\hat{E}_u$ and $\hat{E}_y$, let's denote by $\hat{F}_k$ the descendant of $F_k$ in $\hat{D}$, for $3\leq k\leq 9$. Then the class $E_j$ (resp. $E_w$) determines a complex line in the $4$-ball $B(\hat{E}_u)$, such that $\hat{F}_6$ and $\hat{F}_8$ 
(resp. $\hat{F}_4$ and $\hat{F}_9$) are tangent to it at $\hat{E}_u$,
with the intersection of $\hat{F}_6$ and $\hat{F}_8$ (resp. $\hat{F}_4$ and $\hat{F}_9$) 
at $\hat{E}_u$ being of tangency of order $2$ (cf. Remark (2)(ii) following Theorem 4.3). Similar discussions apply to $\hat{E}_y$, and the
classes $E_k, E_z$. 

\end{example}


\vspace{2mm}

{\Small University of Massachusetts, Amherst.\\
{\it E-mail:} wchen@math.umass.edu

\end{document}